    \definecolor{darkred}{rgb}{0.5,0,0}
    \definecolor{darkgreen}{rgb}{0,0.5,0}
    \definecolor{darkblue}{rgb}{0,0,0.5}
\numberwithin{equation}{section}    
\theoremstyle{plain}
\newtheorem{Theorem}{Theorem}[section]
\newtheorem{Proposition}[Theorem]{Proposition}
\newtheorem{Corollary}[Theorem]{Corollary}
\newtheorem{Lemma}[Theorem]{Lemma}
\theoremstyle{definition}
\newtheorem{Definition}[Theorem]{Definition}
\newtheorem{Example}[Theorem]{Example}
\theoremstyle{remark}
\newtheorem{Remark}[Theorem]{Remark}
\renewcommand{\epsilon}{\varepsilon}
\renewcommand{\phi}{\varphi}
\newcommand{\one}{\mathbf{1}} 
\newcommand{\ato}[2]{\genfrac{}{}{0pt}{2}{#1}{#2}}
\newcommand{\EE}{\mathbb{E}}
\newcommand{\RR}{\mathbb{R}}
\newcommand{\CC}{\mathbb{C}}
\newcommand{\NN}{\mathbb{N}}
\newcommand{\TT}{\mathbb{T}}
\newcommand{\ZZ}{\mathbb{Z}}
\newcommand{\hm}[1]{\textbf{*}\leavevmode{\marginpar{\tiny%
$\hbox to 0mm{\hspace*{-0.5mm}$\leftarrow$\hss}%
\vcenter{\vrule depth 0.1mm height 0.1mm width \the\marginparwidth}%
\hbox to 0mm{\hss$\rightarrow$\hspace*{-0.5mm}}$\\\relax\raggedright #1}}}
\title{Almost-additive ergodic theorems for amenable groups}
\author{Felix Pogorzelski \thanks{Mathematisches Institut, Universit\"at Leipzig, 04109 Leipzig, Germany}}
\date{}
\begin{document}

\maketitle


\begin{abstract}
In this paper we prove a general convergence theorem for almost-additive set functions on unimodular, amenable groups. These mappings take their values in some Banach space. By extending the theory of epsilon-quasi tiling techniques, we set the ground for far-reaching applications in the theory of group dynamics. In particular, we verify the almost-everywhere convergence of abstract approximable bounded, additive processes, as well as a Banach space approximation result for the spectral distribution function (integrated density of states) for random operators on discrete structures in a metric space. Further, we include a Banach space valued version of the Lindenstrauss ergodic theorem for amenable groups.
\end{abstract}

\section{Introduction}

The analysis of the dynamical properties of almost-additive functions has become an essential undertaking in the mathematical fields of ergodic theory, dynamical systems, spectral theory and mathematical physics. In general, one considers mappings $F$ defined on a set of subsets in a locally compact measure space $(G, m)$ and taking its values in some Banach space $X$. Further, the function is supposed to have certain boundedness-, additivity- and invariance properties.  
Inspired by elementary convergence theorems for subadditive sequences with values in $\RR$ or $\CC$, one might raise the question whether the abstract averages $F(U_j)/m(U_j)$ converge in the topology of $X$ along sequences or even along nets which are characteristic for the space $G$. Positive results on this issue for certain dynamical systems may e.g.\@ be found in \cite{Lenz-02, LenzS-06, Klassert-07, Krieger-07, Krieger-10}. In \cite{LenzMV-08}, it is shown that the normalized almost-additive functions converge along canonical exhaustions of $G=\ZZ^d$ by $d$-dimensional boxes. As a consequence, the authors verify the uniform approximation of the integrated density of states for certain finite-range operators by finite-volume analogues. 
Those results were generalized in \cite{LenzSV-10} for a geometrically restricted class of countable, amenable groups. A proof of the convergence in the case of {\em all} countable, amenable groups has recently been given in \cite{PogorzelskiS-11}. In this work, the authors use the theory of $\varepsilon$-quasi tilings, cf.\@ \cite{OrnsteinW-87}, to handle the geometric difficulties that occured in previous papers.    
A more abstract choice for the space $G$ is a unimodular, second countable, amenable group along with its (invariant) Haar measure $m$. This is the framework of the present paper. 

Unlike in \cite{PogorzelskiS-11}, there is no need to assume that $G$ is countable in the context of this work.  
It is well known that for second countable groups, amenability is characterized by the existence of particular averaging sequences $\{U_j\}_{j \in \NN}$ of compact sets, called F{\o}lner sequences. Under mild compactness criteria for orbits induced by $G$-actions on the Banach space $X$, we are able to prove the convergence for the normalized versions of almost-additive functions along F{\o}lner sequences. The corresponding main Theorem~\ref{thm:METSF} can be considered as an abstract mean ergodic theorem for set functions and it generalizes previous results from ergodic theory. In the situation of countable groups, the theorem coincides with the main result of \cite{PogorzelskiS-11} if the function $F$ under consideration is invariant under the action by $G$. \\ 
A first direct application of the main Theorem~\ref{thm:METSF} is given in Section~\ref{sec:BAP}, where we prove the almost-everywhere convergence along strong F{\o}lner sequences for approximable, bounded, additive processes defined on unimodular, amenable groups. The corresponding Theorem~\ref{thm:PWET} complements the hitherto existing knowledge in the field, where one usually works with semigroup contraction actions in $G=\RR^{d+}$, cf.\@ \cite{Sato-99, Sato-03}. Dealing with general amenable groups, our theorem is significantly more general as far as the choices for the space $G$, as well as for the approximating sequences are concerned. However, we have to restrict ourselves slightly by making sure that the dynamics are similar to group actions. In Theorem~\ref{thm:DET}, we prove a dominated ergodic theorem for bounded, additive processes, which is a crucial ingredient for the convergence result. The techniques of the proof are inspired by classical differentiation theorems, see also \cite{AkcogluJ-81, Emilion-85, Sato-98, Sato-99, Sato-03}. \\
As another application, we use Theorem~\ref{thm:METSF} to verify the almost-sure approximation of the integrated density of states for random operators on discrete structures in metric spaces which are quasi isometric to some unimodular, amenable group. With probability one, the limit is attained in $L^p(I)$, where due to reflexivity $1 < p < \infty$ and $I \subset \RR$ is an arbitrary bounded interval. This is the first Banach space convergence result which takes into account the model of {\sc Lenz} and {\sc Veseli{\'c}} (cf.\@ \cite{LenzV-09}) in its original setting. Thus, we extend the considerations from \cite{PogorzelskiS-11}, where the authors had to deal with countable spaces and with a deterministic setting.\\
The paper is organized as follows. In Section~\ref{sec:preliminaries}, we give the preliminaries which are necessary for the understanding of this work. The main issue will be the introduction of the notions of weak and strong F{\o}lner sequences in amenable groups. Furthermore, growth conditions on those sequences are studied. Next, in Section \ref{sec:tilingtheorems}, we develop combinatorial decomposition theorems which will be the major tools in the proof of our main result, Theorem~\ref{thm:METSF}. In this context, we verify the existence of $\varepsilon$-quasi tilings which extend the techniques of \cite{OrnsteinW-87} and \cite{PogorzelskiS-11}. Precisely, we construct so-called uniform decomposition towers (UDT) in Theorem~\ref{thm:UDT} which can be considered as a pair of families of $\varepsilon$-quasi tilings with a high degree for its uniformity and covering properties. The short Section~\ref{sec:absMET} is devoted to a recapitulation of classical mean ergodic theorems of amenable group actions on Banach spaces, cf.\@ \cite{Greenleaf-73}. In the following Section~\ref{sec:MET}, we combine the classical results of the previous section with the combinatorial methods of Section \ref{sec:tilingtheorems} in order to prove Theorem~\ref{thm:METSF} which can be considered as a mean ergodic theorem for almost-additive set functions. In the corresponding proof, we will have to work with uniform decomposition towers. 
Another variant of the main theorem with slightly different assumptions is given in Corollary~\ref{cor:METSF}.
Turning to measure dynamical systems, we treat pointwise ergodic theorems for amenable groups in Section~\ref{sec:PWET}. Extending classical ergodic theory, these investigations are independent of the results of the Sections \ref{sec:tilingtheorems} and \ref{sec:MET}. Precisely, in Theorem~\ref{thm:lindenstr}, we formulate and prove a Banach-space generalization of the {\sc Lindenstrauss} ergodic theorem, see \cite{Lindenstrauss-01}. 
In the following Section~\ref{sec:BAP}, we introduce the notion of bounded, additive processes on amenable groups. After giving some examples, we use the so-called associated dominating process for the proof of an $L^p$-maximal inequality (Theorem~\ref{thm:DET}). Combining this latter statement with the main Theorem~\ref{thm:METSF}, we derive the corresponding Banach space valued pointwise convergence for a class of additive processes in Theorem~\ref{thm:PWET}. This class consists of 
those processes which can be well-approximated by bounded, additive
processes in $L^{\infty}$, cf.\@~Section~\ref{sec:BAP}. 
As a spectral theoretic application of pointwise convergence, we consider random operators defined on randomly chosen point sets in a metric space $(X,d_X)$ possessing a quasi isometry to some amenable group. Combining Corollary~\ref{cor:METSF} with the pointwise ergodic Theorem~\ref{thm:lindenstr}, we show in Theorem~\ref{thm:pointwise} the almost everywhere convergence of finite-dimensional IDS-approximants as elements in $L^p$.      
Finally, we include an appendix in Section~\ref{sec:appendix}, where we give the quite lengthy proofs of Theorem~\ref{thm:LPmax} and  Lemma~\ref{lemma:partition}.  

\medskip

This manuscript is an update of its previous version which with minor changes has appeared in \cite{PogJFA}. It was observed by the author that the pointwise ergodic theorem for bounded, additive processes (here Theorem~\ref{thm:PWET}), needs the additional assumption of approximability. This was pointed out in \cite{PogERR}. Following the content of the latter note, we correct the ergodic theorem and its proof in Section~\ref{sec:BAP}. Some additional minor flaws are corrected as well.

\section{Preliminaries} \label{sec:preliminaries}

Throughout this paper, we will refer to $G$ as a locally compact, second countable and amenable Hausdorff group with neutral element $\operatorname{id}$. Denoting by $\mathcal{B}(G)$ the {\em Borel} $\sigma$-algebra generated by the open subsets in $G$, one finds (up to constants) exactly one regular measure $m_L(\cdot)$ on $\mathcal{B}(G)$, called the {\em left Haar measure} which is invariant under group multiplication by elements from the left, i.e. $m_L(gA) = m_L(A)$ for every $g \in G$ and all $A \in \mathcal{B}(G)$. In this work, we need to restrict ourselves to so-called {\em unimodular} groups, i.e.\@ groups for which the unique Haar measure is both left- {\em and} right-invariant. In this case we simply write $|A|$ for the measure of some set $A \in \mathcal{B}(G)$. When integrating over sets in an unimodular amenable group, we will use the notation $dm_L(g) = dg$. Note that for instance, all discrete and all abelian groups are unimodular. We shall write $\mathcal{F}(G):= \{A \in \mathcal{B}(G) \,|\, \overline{A}\, \mbox{cpt.} \, \}$ for the collection of Borel sets in $G$ with compact closure (precompact sets). For the cardinality of some finite set $A \subseteq G$, we will write $\operatorname{card}(A)$.  \\

In every amenable second countable group, one can find so-called {\em F{\o}lner} sequences, i.e.\@ sequences of compact subsets $(S_n)$ of $G$ which are asymptotically invariant under left-translation by arbitrary compact sets. To give precise definitions, we introduce the concept of the boundary of a compact set $T$ relatively to some compact set $K$. 

\begin{Definition} \label{defi:KBD}
Let $G$ be an amenable and second countable group. Assume that $\emptyset \neq K, T \subset G$ are compact subsets in $G$. We call the set $\partial_K(T)$, defined by
\begin{eqnarray*}
\partial_K(T):= \{g \in G\,|\, Kg \cap T \neq \emptyset \,\wedge\, Kg \cap (G\setminus T) \neq \emptyset\}
\end{eqnarray*}
the \emph{$K$-boundary} of the set $T$.
Furthermore, $T$ is called \emph{$(K,\delta)$-invariant} if
\begin{eqnarray*}
\frac{|\partial_{K}(T)|}{|T|}   < \delta.
\end{eqnarray*}
\end{Definition}       

In the following Lemma, we summarize some nice and useful properties of the relative boundary definition. 

\begin{Lemma} \label{prop:prop}
Let $T,S,K \subset G$ and assume that $g \in G$. Then the following is true.
\begin{enumerate}[(i)]
\item $\partial_K(T) = \partial_K(G\setminus T)$.
\item $\partial_K(S \cup T) \subset \partial_K(S) \cup \partial_K(T)$.
\item $\partial_K(S \setminus T) \subset \partial_K(S) \cup \partial_K(T)$.
\item $\partial_K(T) \subset \partial_L(T)$ if $K \subset L \subset G$.
\item $\partial_K(Tg) = \partial_K(T)g$.
\item $|\partial_K(S \setminus T)| \leq |\partial_K(T)| + |\partial_K(S)|$
\item $\partial_K(TS) \subset \partial_K(T)S$
\end{enumerate}
\end{Lemma}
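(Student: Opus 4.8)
The plan is to verify each of the seven inclusions in turn, working directly from the definition of the $K$-boundary $\partial_K(T) = \{g : Kg \cap T \neq \emptyset \wedge Kg \cap (G\setminus T) \neq \emptyset\}$. Item (i) is immediate from the symmetry of the defining condition in $T$ and $G \setminus T$, since $G \setminus (G \setminus T) = T$. For items (ii) and (iii) the idea is the same: if $g \in \partial_K(S \cup T)$, then $Kg$ meets $S \cup T$ and also meets its complement $(G\setminus S) \cap (G \setminus T)$; the translate $Kg$ then meets at least one of $S$ or $T$, and it certainly meets both $G \setminus S$ and $G \setminus T$, so a short case distinction places $g$ in $\partial_K(S)$ or $\partial_K(T)$. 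For (iii) one writes $S \setminus T = S \cap (G \setminus T)$ and argues analogously, using $G \setminus (S \setminus T) = (G\setminus S) \cup T$. Item (iv) follows because $K \subset L$ forces $Kg \subset Lg$, so any translate of $L$ that separates (meets both $T$ and its complement) is witnessed already through the smaller translate being contained in it — more precisely, $Kg \cap T \neq \emptyset \Rightarrow Lg \cap T \neq \emptyset$ and likewise for the complement. Item (v) is a direct computation: $Kg \cap Tg' = (K g g'^{-1} \cap T) g'$ by right-invariance of the group operation on sets, so $g \in \partial_K(Tg')$ iff $gg'^{-1} \in \partial_K(T)$, i.e. $\partial_K(Tg') = \partial_K(T)g'$; the same reasoning applies to the complement since $G \setminus (Tg') = (G \setminus T)g'$.

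Item (vi) is the only place measure theory enters. Here I would combine (iii) with subadditivity of Haar measure: from $\partial_K(S\setminus T) \subset \partial_K(S) \cup \partial_K(T)$ and the fact that $|\cdot|$ is monotone and subadditive on $\mathcal{B}(G)$, we get $|\partial_K(S\setminus T)| \leq |\partial_K(S)| + |\partial_K(T)|$. (One should note the $K$-boundary of a compact set is Borel — indeed precompact — so the measure is finite and these manipulations are legitimate.) Item (vii) asserts $\partial_K(TS) \subset \partial_K(T)S$. The plan is: suppose $g \in \partial_K(TS)$ but $g \notin \partial_K(T)S$, i.e. $g s^{-1} \notin \partial_K(T)$ for every $s \in S$. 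Pick a point $x = kg \in Kg \cap TS$ with $k \in K$ and write $x = t s$ for some $t \in T$, $s \in S$; then $kgs^{-1} = t \in T$, so $Kgs^{-1}$ meets $T$, and since $gs^{-1} \notin \partial_K(T)$ we must have $Kgs^{-1} \subset T$, i.e. $Kg \subset Ts$. But $g \in \partial_K(TS)$ also gives a point $k'g \in Kg \cap (G \setminus TS)$; since $k'g \in Kg \subset Ts \subset TS$, this is a contradiction. Hence $g \in \partial_K(T)S$.

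I expect item (vii) to be the main obstacle, since it is the only inclusion where one has to move between a two-sided product $TS$ and a translate, and the argument requires carefully producing the right witness points and exploiting that a translate of $K$ avoiding $\partial_K(T)$ is either contained in $T$ or disjoint from it. The remaining parts are routine set-theoretic bookkeeping together with the standard monotonicity and subadditivity of Haar measure.
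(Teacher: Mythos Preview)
Your proof plan is correct: each of the seven items is handled by the appropriate elementary set-theoretic or measure-theoretic argument, and in particular your treatment of (vii) via the contradiction ``$gs^{-1}\notin\partial_K(T)$ forces $Kgs^{-1}\subset T$, hence $Kg\subset Ts\subset TS$'' is exactly the right mechanism. The paper itself does not give a proof at all but simply refers the reader to \cite{PogorzelskiS-11}, Lemma~2.3, so your write-up in fact supplies more detail than the present paper does.
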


\begin{proof}
See e.g.\@ \cite{PogorzelskiS-11}, Lemma 2.3. 
\end{proof}

\begin{Definition} \label{defi:FS}
Let $(S_n)$ be a sequence of non-empty compact subsets of an unimodular group $G$. If
\begin{align*}
\lim_{n \rightarrow \infty} \frac{|S_n\triangle K S_n|}{|S_n|} = 0
\end{align*}
for all non-empty, compact $K \subset G$, then $(S_n)$ is called {\em weak F{\o}lner sequence}. If
\begin{align*}
\lim_{n \rightarrow \infty} \frac{|\partial_K(S_n)|}{|S_n|} = 0
\end{align*}
for all non-empty, compact $K \subset G$, then $(S_n)$ is called {\em strong F{\o}lner sequence}.
We say that a (weak or strong) F{\o}lner sequence $(S_n)$ is {\em nested} if $\operatorname{id}\in S_1$ and $S_n\subset S_{n+1}$ for all $n \geq 1$.
\end{Definition}

Note that in second countable groups, amenability is characterized by the existence of weak F{\o}lner sequences, see e.g.\@ \cite{Greenleaf-73}. As the following lemma shows, it is also true that all second countable, unimodular amenable groups possess strong F{\o}lner sequences. 

\begin{Lemma} \label{lemma:folner}
Let $G$ be a second countable, unimodular, amenable group. Then the following statements hold true. 
\begin{enumerate}[(i)]
\item There exists a strong F{\o}lner sequence in $G$.
\item Each strong F{\o}lner sequence is a weak F{\o}lner sequence. 
\item If $G$ is countable, then every weak F{\o}lner sequence is also a strong F{\o}lner sequence.
\item There exists a nested strong F{\o}lner sequence in $G$. 
\end{enumerate}
\end{Lemma}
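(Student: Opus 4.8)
My plan is to establish the four assertions in the order (ii), (iii), (i), (iv), since the existence statements are most cleanly obtained by combining the comparison statements with the known existence of weak F{\o}lner sequences (Greenleaf). For (ii), I would fix a compact $K \subset G$ with $\operatorname{id} \in K$ (enlarging $K$ if necessary, which only makes the boundary larger by Lemma~\ref{prop:prop}(iv)) and show the pointwise inclusion $S_n \triangle KS_n \subseteq \partial_{K \cup K^{-1}}(S_n)$ up to a set of measure zero; more carefully, one checks that if $g \in KS_n \setminus S_n$ then $g = ks$ with $k \in K$, $s \in S_n$, so $K^{-1}g \ni s \in S_n$ while $g \notin S_n$, whence $g \in \partial_{K^{-1}}(S_n)$, and symmetrically for $S_n \setminus KS_n$ using that $\operatorname{id}\in K$. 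Thus $|S_n \triangle KS_n| \le C\,|\partial_{K'}(S_n)|$ for a suitable fixed compact $K'$, and the strong F{\o}lner property forces the weak one.

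For (iii), the point is that in a countable (hence discrete) group every nonempty compact set is finite, and one shows $\partial_K(S_n) \subseteq \bigcup_{k \in K}\big( (kS_n \setminus S_n) \cup (S_n \setminus kS_n)\big)$: indeed if $g \in \partial_K(S_n)$ then there are $k_1,k_2 \in K$ with $k_1 g \in S_n$ and $k_2 g \notin S_n$, so $g \in k_1^{-1}S_n$ and $g \notin k_2^{-1}S_n$, placing $g$ in the symmetric difference $k_1^{-1}S_n \triangle k_2^{-1}S_n$, which by the triangle inequality for $\triangle$ is contained in $(k_1^{-1}S_n \triangle S_n) \cup (S_n \triangle k_2^{-1}S_n)$. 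Summing over the finitely many elements of $K \cup K^{-1}$ and using invariance of counting measure gives $\operatorname{card}(\partial_K(S_n)) \le \sum_{k} \operatorname{card}(S_n \triangle k S_n)$, and dividing by $\operatorname{card}(S_n)$ shows the weak F{\o}lner property implies the strong one. (This also silently uses unimodularity so that $|kS_n| = |S_n|$.)

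Given (ii) and (iii), assertion (i) in the countable case is immediate from Greenleaf's theorem. For general second countable unimodular $G$ one needs a genuine construction; here I would either invoke the standard fact (e.g.\@ from \cite{OrnsteinW-87} or \cite{Greenleaf-73}) that such groups admit F{\o}lner sequences whose relative boundaries are small — equivalently, approximate a weak F{\o}lner set $S_n$ by a finite union of translates of a fixed small neighbourhood $V$ of $\operatorname{id}$ and pass to the resulting ``$V$-thickened'' sets, whose $K$-boundary is controlled by the $KV$-symmetric difference of $S_n$ — or simply cite that weak and strong F{\o}lner sequences are cofinal in the appropriate sense. For (iv), starting from a strong F{\o}lner sequence $(T_n)$ from (i) and an exhausting sequence $(K_n)$ of compact sets with $\bigcup_n K_n = G$ and $\operatorname{id} \in K_1$, I would pass to a subsequence $(T_{n_j})$ so rapidly that $|\partial_{K_j}(T_{n_j})|/|T_{n_j}| < 2^{-j}$, then set $S_j := K_j \cup T_{n_j} \cup S_{j-1}$ (with $S_1 := K_1 \cup T_{n_1}$); monotonicity and $\operatorname{id} \in S_1$ are then automatic, and the $K$-boundary of $S_j$ is controlled via Lemma~\ref{prop:prop}(ii) by $|\partial_K(K_j)| + |\partial_K(T_{n_j})| + |\partial_K(S_{j-1})|$, each term being small relative to $|T_{n_j}| \le |S_j|$ once $j$ is large enough that $K \subseteq K_j$. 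The main obstacle is the general (non-discrete) case of (i): the clean combinatorial bound in (iii) breaks down because compact sets are no longer finite and counting measure is replaced by Haar measure, so one must instead thicken F{\o}lner sets by a small identity neighbourhood and carefully estimate how this thickening interacts with the relative boundary — this is exactly the point where one leans on the quasi-tiling / approximation machinery referenced in the introduction rather than an elementary argument.
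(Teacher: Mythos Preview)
The paper does not actually prove this lemma: its entire proof is the one-line citation ``See e.g.\ \cite{PogorzelskiS-11}, Lemma~2.5.'' Your proposal therefore goes well beyond what the paper does, supplying a genuine argument where the paper defers to the literature.

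Your treatments of (ii) and (iii) are correct and standard: the inclusions $S_n\triangle KS_n\subseteq\partial_{K\cup K^{-1}\cup\{\operatorname{id}\}}(S_n)$ and $\partial_K(S_n)\subseteq\bigcup_{k\in K}(k^{-1}S_n\triangle S_n)$ are exactly the right comparisons, and the finiteness of compact sets in the discrete case makes the sum in (iii) finite. For (iv) your construction is essentially right, but as written it is slightly underspecified: when you set $S_j=K_j\cup T_{n_j}\cup S_{j-1}$ and bound $|\partial_K(S_j)|$ via Lemma~\ref{prop:prop}(ii), the terms $|\partial_K(K_j)|$ and $|\partial_K(S_{j-1})|$ must be made small relative to $|S_j|$, not just the $T_{n_j}$ term. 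This is easily fixed by choosing $n_j$ \emph{inductively} (after $K_j$ and $S_{j-1}$ are known) so that in addition to $|\partial_{K_j}(T_{n_j})|/|T_{n_j}|<2^{-j}$ one also has $|T_{n_j}|>2^{j}\bigl(|\partial_{K_j}(K_j)|+|\partial_{K_j}(S_{j-1})|\bigr)$; then Lemma~\ref{prop:prop}(iv) handles all $K\subseteq K_j$ uniformly. You correctly identify that the only genuinely nontrivial point is (i) in the non-discrete case, and your suggested thickening argument (or a citation to \cite{OrnsteinW-87}) is the standard route; since the paper itself only cites, this is entirely in the same spirit.
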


\begin{proof}
See e.g.\@ \cite{PogorzelskiS-11}, Lemma 2.5.
\end{proof}

It is a well known fact that one cannot expect pointwise ergodic theorems for arbitrary F{\o}lner sequences, cf.\@ \cite{Emerson-74}. Hence it is important for our purposes to impose some growth conditions on the F{\o}lner sequences under consideration. 

\begin{Definition} \label{defi:growth}
Let $G$ be a second countable, unimodular, amenable group and assume that $(S_n)$ is a weak or strong F{\o}lner sequence in $G$. 
\begin{itemize}
\item We say that $(S_n)$ satisfies the {\em Tempelman} condition if there is a constant $C > 0$ such that
\[
\Big| \bigcup_{i \leq N} S_i^{-1}S_N  \Big| \leq C\, |S_N|
\]
for all $N \in \NN$.
\item We say that $(S_n)$ satisfies the {\em Shulman} condition if there is a constant $\tilde{C} > 0$ such that
\[
\Big| \bigcup_{i < N} S_i^{-1}S_N  \Big| \leq \tilde{C}\, |S_N|
\]
for all $N \in \NN$.
In this case, we say that $(S_n)$ is a {\em tempered} F{\o}lner sequence.  
\end{itemize}
\end{Definition} 

\begin{Remark}
It is evident that the Tempelman condition is stronger than the Shulman condition. As has been shown in \cite{Lindenstrauss-01}, tempered weak F{\o}lner sequences always exist in second countable amenable groups, but there are second countable amenable groups that do not possess a F{\o}lner sequence satisfying the Tempelman condition. On the other hand, as the following Theorem shows, there are sufficient conditions on the group for the existence of Tempelman F{\o}lner sequences.  
\end{Remark}

\begin{Theorem}[\cite{Hochman-07}, Theorem 3.4]
If for a countable, abelian, amenable group $G$, we have
\[
r(G) := \sup \{n \in \NN \,\, |\, \,\, G \mbox{ contains a subgroup isomorphic to  }\ZZ^n\} < \infty,
\]
then $G$ possesses at least one Tempelman F{\o}lner sequence. 
\end{Theorem}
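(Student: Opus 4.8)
The plan is to reduce the assertion to a purely combinatorial construction carried out inside a nested exhaustion of $G$ by finitely generated subgroups. Write $G$ additively, so that $S_i^{-1}S_N = S_N - S_i$. The elementary point is that for a \emph{nested} sequence $S_1 \subseteq S_2 \subseteq \cdots$ one has $S_N - S_i \subseteq S_N - S_N$ whenever $i \le N$, and hence $\bigcup_{i \le N} S_i^{-1}S_N = S_N - S_N$. Therefore it suffices to exhibit a nested F{\o}lner sequence $(S_N)$ enjoying the \emph{doubling} bound $|S_N - S_N| \le C\,|S_N|$ for a fixed constant $C$; such a sequence automatically satisfies the Tempelman condition.

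For the structural input, put $r := r(G) < \infty$ and choose a maximal $\ZZ$-linearly independent subset $\{a_1, \dots, a_r\} \subseteq G$; then $F := \ZZ a_1 + \cdots + \ZZ a_r \cong \ZZ^r$ and the quotient $G/F$ is a torsion group, since otherwise an element of infinite order in $G/F$ would enlarge the independent set and contradict $r(G) = r$. As $G$ is countable we exhaust the torsion group $G/F$ by an increasing sequence of finite subgroups and pull these back to a chain $F = H_0 \subseteq H_1 \subseteq H_2 \subseteq \cdots$ of subgroups of $G$ with $\bigcup_k H_k = G$ and $[H_k : F] < \infty$. Each $H_k$ is then finitely generated abelian of torsion-free rank $r$, so $H_k \cong \ZZ^r \oplus T_k$ with $T_k$ finite, and the torsion subgroups form an increasing chain $T_k \subseteq T_{k+1}$. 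Passing to the torsion-free quotients $\Lambda_k := H_k/T_k$ gives an increasing chain of rank-$r$ lattices $\Lambda_k \hookrightarrow \Lambda_{k+1}$; after fixing identifications $\Lambda_k \cong \ZZ^r$ this inclusion is multiplication by a matrix $M_k \in \ZZ^{r\times r}$ with $\det M_k \neq 0$, and one records the "carry" maps $\tau_k \colon \ZZ^r \to T_{k+1}$ that describe how the chosen free generators of $H_k$ decompose inside $H_{k+1}$.

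The F{\o}lner sequence is obtained as a diagonal of the boxes $Q_{k,n} := \{-n,\dots,n\}^r \times T_k$ viewed inside $\ZZ^r \oplus T_k \cong H_k$. For fixed $k$ these form a F{\o}lner sequence in $H_k$ with the \emph{uniform} doubling estimate $|Q_{k,n} - Q_{k,n}| = |T_k|\,(4n+1)^r \le 2^r\,|Q_{k,n}|$, because $T_k$ is a subgroup and hence $T_k - T_k = T_k$. One sets $S_N := Q_{k(N),\,n(N)}$ with $k(N), n(N) \to \infty$ nondecreasing and chosen greedily: each time the index is raised from $k$ to $k+1$, the new half-width $n(N+1)$ is taken large enough to guarantee $Q_{k,n(N)} \subseteq Q_{k+1,n(N+1)}$ (it suffices that $n(N+1) \geq \|M_k\|_\infty\, n(N)$, up to the bounded carry) and also large enough compared with the matrix products $M_{k-1}\cdots M_j$, which govern how the $\ZZ^r$-coordinate of any fixed element of $H_j$ drifts as $k$ grows, so that the F{\o}lner quotients of $S_N$ tend to zero; between successive increases of the index one merely lets $n(N)$ grow. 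The resulting sequence is nested by construction, exhausts $G$ since $k(N), n(N) \to \infty$, and is a F{\o}lner sequence because translating a box $\{-n,\dots,n\}^r \times T_{k(N)}$ by a fixed finite set absorbs the torsion coordinate and alters only a lower-order ``surface'' in the $\ZZ^r$-coordinate, negligible once $n(N)$ outpaces the drift; finally $|S_N - S_N| \le 2^r\,|S_N|$, so by the first paragraph it is a Tempelman F{\o}lner sequence with $C = 2^r$.

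The crux, and the main obstacle, is the incompatibility of the product decompositions $H_k \cong \ZZ^r \oplus T_k$ for varying $k$: a priori $Q_{k,n}$ need not lie in any $Q_{k+1,n'}$, so the nestedness on which the entire reduction rests is not automatic. It is forced by carrying along the integer matrices $M_k$ relating the successive $\ZZ^r$-lattices and by inflating the box sizes along the diagonal fast enough to swallow both these matrices and the cumulative drift of test elements — which is feasible precisely because the index $k$ is raised only along a sparse subsequence, leaving ample room to enlarge $n(N)$ in between. The remaining verifications, that the boxes so obtained genuinely form a F{\o}lner sequence and the bookkeeping involving the carry maps $\tau_k$, are then routine.
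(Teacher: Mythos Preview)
The paper does not give its own proof of this statement; it is quoted verbatim as Theorem~3.4 of \cite{Hochman-07} and used only as background motivation for the Tempelman condition. There is therefore nothing in the paper to compare your argument against.

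On its own merits your outline is correct. The reduction in the first paragraph is valid: for a nested sequence one has $\bigcup_{i\le N}(S_N-S_i)=S_N-S_N$, so a uniform doubling bound $|S_N-S_N|\le C|S_N|$ is exactly the Tempelman condition. The structure step is also fine: a maximal independent set of size $r=r(G)$ gives $F\cong\ZZ^r$ with $G/F$ torsion (else the independent set could be enlarged), and a countable torsion abelian group is the increasing union of its finitely generated, hence finite, subgroups. The boxes $Q_{k,n}=\{-n,\dots,n\}^r\times T_k$ satisfy $|Q_{k,n}-Q_{k,n}|=(4n+1)^r|T_k|<2^r|Q_{k,n}|$ uniformly in $k$ and $n$, precisely because $T_k-T_k=T_k$.

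The only genuinely delicate point is the one you flag yourself: the splittings $H_k\cong\ZZ^r\oplus T_k$ are not compatible as $k$ varies, so nestedness and the F{\o}lner property must be forced by hand. Your scheme does this correctly. Writing the inclusion $H_k\hookrightarrow H_{k+1}$ as $(v,t)\mapsto(M_kv,\tau_k(v)+\iota(t))$, the torsion coordinate is absorbed automatically, and the containment $Q_{k,n}\subseteq Q_{k+1,n'}$ holds once $n'\ge\|M_k\|_\infty\,n$. For the F{\o}lner property, a fixed $g\in H_j$ has $\ZZ^r$-coordinate in $H_k$ bounded by $\|M_{k-1}\cdots M_j\|_\infty$ times a constant depending on $g$; since the F{\o}lner defect of $Q_{k,n}$ against such a $g$ is $O(\|M_{k-1}\cdots M_j\|_\infty/n)$, a diagonal choice of $n(N)$ growing faster than these matrix products (for the first $N$ elements of an enumeration of $G$) drives the defect to zero. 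The final paragraph is terse and would benefit from an explicit recursive definition of $k(N)$ and $n(N)$, but there is no actual gap.
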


\begin{Remark}
The number $r(G)$ is called the {\em abelian rank} of $G$. 
\end{Remark}

\section{Tiling Theorems} \label{sec:tilingtheorems}

This section is devoted to combinatorial decomposition theorems for unimodular, amenable groups by $\varepsilon$-quasi tilings. To do so, we recapitulate the notion of the special tiling property (STP) which has been introduced in \cite{PogorzelskiS-11}. In the latter paper, the authors prove that (STP) is always satisfied (\cite{PogorzelskiS-11}, Theorem~4.5). Further, it is shown there that large F{\o}lner sets in countable groups can be $\varepsilon$-quasi tiled by a uniform family of coverings with desirable properties on average (cf.\@ \cite{PogorzelskiS-11}, Theorem~4.7). Based on the constructions given in \cite{OrnsteinW-87}, we prove in Theorem \ref{thm:UCD} an analogous result which also holds for continuous groups. A new feature here are the quantitative estimates on the average degree of uniformity as well as on the average portions of covered mass of the $\varepsilon$-quasi-tilings in the family. In order to obtain stronger uniformity properties, we further work with {\em pairs} of such families of $\varepsilon$-quasi tilings. More precisely, we introduce the concept of a so-called {\em uniform decomposition tower} (UDT) for amenable groups in Definition \ref{defi:UDT}. In the main Theorem \ref{thm:UDT} of this section we show that the construction of these pairs is always possible in the unimodular situation. We will see in Section 5 of this paper how this construction can be used to prove a Banach space valued ergodic theorem for a special class of mappings defined on $\mathcal{F}(G)$.

As before, we always refer to $G$ as a second countable, unimodular, amenable, locally constant Hausdorff group.

\begin{Definition} \label{defi:epsdis}
Let $A,B \subseteq G$. For a number $0 < \varepsilon < 1$, we say that $A$ and $B$ are $\varepsilon$-disjoint if there are sets $\overline{A} \subseteq A$ and $\overline{B} \subseteq B$ such that
\begin{itemize}
\item $\overline{A} \cap \overline{B} = \emptyset$.
\item $|\overline{A}| \geq (1-\varepsilon)|A|$ and $|\overline{B}| \geq (1-\varepsilon)|B|$. 
\end{itemize}
\end{Definition}

\begin{Definition} \label{defi:alphacov}
Let $A,B \subseteq G$. For a number $0 < \alpha \leq 1$, we say that the set $A$ $\alpha$-covers the set $B$ if
\[
|A \cap B| \geq \alpha\,|B|. 
\]
\end{Definition}

In the following, for any real number $s \in \RR$, we use the notation
\[
\lceil s \rceil := \min\{ n \in \NN \,|\, n \geq s\}.
\]
For $0 < \varepsilon < 1$, we define the number $N(\varepsilon) \in \NN$ as 
\[
N(\varepsilon) := \lceil \log(\varepsilon)/\log(1-\varepsilon) \rceil.
\]


As has been done before in \cite{PogorzelskiS-11}, we introduce the concept of the {\em special tiling property} for amenable groups. 

\begin{Definition}[Special tiling property] \label{defi:STP}
Let $G$ be an amenable group. Then $G$ is said to have the {\em special tiling property} (STP) if for all $0 < \varepsilon < 1/10$, every parameter $0 < \beta < \varepsilon$ and every nested strong F{\o}lner sequence $(S_n)$ there are $N(\varepsilon)$ many sets
\begin{eqnarray*}
\{\operatorname{id}\} \subseteq T_1 \subseteq T_2 \subseteq \dots \subseteq T_{N}, \quad \quad T_i \in \{S_n \,|\, n \geq i\}, \,\, 1 \leq i \leq N, \,\, N:=N(\varepsilon)
\end{eqnarray*} 
as well as a number $\delta_0 > 0$ depending only on $\beta$ such that for every $0 < \delta < \delta_0$ and every $(T_NT_N^{-1}, \delta)$-invariant set $T \in \mathcal{F}(G)$, we can find finite {\em center sets} $C_i^{T}$, $1 \leq i \leq N(\varepsilon)$ such that
\begin{enumerate}[(i)]
\item $T_iC_i^T \subseteq T$ for all $1 \leq i \leq N(\varepsilon)$,
\item $\{T_ic\}_{c \in C_i^T}$ is an $\varepsilon$-disjoint family of sets for all $1 \leq i \leq N(\varepsilon)$,
\item $\{T_iC_i^{T}\}_{i=1}^{N(\varepsilon)}$ is a disjoint family of sets,
\item $\Big| \frac{|T_iC_i^T|}{|T|} - \eta_i(\varepsilon) \Big| < \beta$ for $1 \leq i \leq N(\varepsilon)$, where $\eta_i(\varepsilon):= \varepsilon(1-\varepsilon)^{N(\varepsilon)}$.
\end{enumerate}  
For fixed $0< \varepsilon < 1/10$ and $0 < \beta < \varepsilon$, we say that the {\em basis sets} $T_i$ $\varepsilon$-quasi tile the group $G$ with the parameters $\varepsilon$ and $\beta$ and if for $T \subseteq G$, the properties (i)-(iv) are satisfied, we say that $T$ has the special tiling property (STP) with respect to $(\{T_i\}_{i=1}^{N(\varepsilon)}, (S_n), \varepsilon, \beta)$ and that the set $T$ is $\varepsilon$-quasi tiled (with parameter $\beta$) by the basis sets $T_i$ with finite center sets $C_i^T$, $1 \leq i \leq N$. 
\end{Definition}

In \cite{PogorzelskiS-11}, it was proven that every unimodular amenable group has the special tiling property. Even more is true: for each number $\varepsilon > 0$, the $T_i$-translates of the covering of $T$ can be made {\em disjoint} in such a way that they maintain certain invariance properties with respect to some fixed compact set $L \subset G$. This leads to the following theorem.

\begin{Theorem}[cf.\@ \cite{PogorzelskiS-11}, Theorem 4.5] \label{thm:STP}
Let $G$ be a unimodular amenable group. Then the following assertions hold true:
\begin{enumerate}[(A)]
\item The group $G$ satisfies the special tiling property.
\item Let $\varepsilon, \beta$ and $(S_n)$ be given as in Definition \ref{defi:STP} and assume further that we are given a positive number $0 < \zeta < \varepsilon$, as well as a compact set $L \subseteq G$ containing the unity $ \operatorname{id}$. Then we can find an $\varepsilon$-quasi tiling with basis sets $T_i$, $1 \leq i \leq N:=N(\varepsilon)$ such that all the properties of Definition \ref{defi:STP} hold and such that for $0 < \delta < 6^{-N}\beta/4$, each compact, $(T_NT_N^{-1}, \delta)$-invariant set $T \subseteq G$ can be $\varepsilon$-tiled (with parameter $\beta$) by translates $T_ic$, $c \in C_i^T$ such that the if the $T_i$-translates are $(L,\eta^2)$-invariant $(1 \leq i \leq N(\varepsilon))$, they can be made pairwise disjoint in a way that guarantees that for every $1 \leq i \leq N(\varepsilon)$ and all $c \in C_i^T$, there is some set $T_i^c \subseteq T_i$ with  
\begin{itemize}
\item $|T_i^c| \geq (1-\varepsilon)|T_i|$,
\item $T_i^c$ is $(L,4\zeta)$-invariant,
\item $|\partial_L(T_i^c)| \leq |\partial_L(T_i)| + \zeta\,|T_i|$,
\item $T_iC_i^T = \cup_{c \in C_i^T} T_i^c c$, where the latter union consists of pairwise disjoint sets. 
\end{itemize}
\end{enumerate}
\end{Theorem}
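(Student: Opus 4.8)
The plan is to reduce the statement to the already-established special tiling property (part (A), which the paper says was proved in \cite{PogorzelskiS-11}) and then to refine the $\varepsilon$-disjoint families furnished by (A) into genuinely disjoint families by shrinking each translate slightly, controlling the amount lost and the effect on $L$-invariance. First I would invoke part (A) to obtain, for the given $\varepsilon,\beta,(S_n)$, basis sets $\{T_i\}_{i=1}^{N}$ and, for any sufficiently invariant compact $T$, center sets $C_i^T$ satisfying (i)--(iv) of Definition~\ref{defi:STP}. The point to add is the disjointification. Within each fixed level $i$, the family $\{T_ic\}_{c\in C_i^T}$ is only $\varepsilon$-disjoint; I would order the centers $c\in C_i^T$ arbitrarily as $c_1,c_2,\dots$ and define $T_i^{c_k}$ recursively by removing from $T_ic_k$ the portion already occupied by the earlier translates $T_i^{c_1}c_1,\dots,T_i^{c_{k-1}}c_{k-1}$ (and, to handle (iii), also the portion occupied by all translates at levels $j\neq i$). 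The $\varepsilon$-disjointness of the original family guarantees that at most an $\varepsilon$-fraction of each $T_ic_k$ is removed at that level, giving $|T_i^{c_k}|\ge(1-\varepsilon)|T_i|$; a standard argument (each point of $T$ lies in boundedly many of the $\varepsilon$-disjoint translates, or rather the overlap mass is controlled) shows the cross-level removals can also be absorbed provided $\delta$ is taken small enough — this is where the threshold $\delta<6^{-N}\beta/4$ comes from, as in \cite{OrnsteinW-87}.

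Next I would address the invariance bookkeeping. The sets being removed are intersections $T_ic_k\cap(T_jc')$ translated back, i.e.\@ of the form $T_i\cap E$ for suitable $E$. Using Lemma~\ref{prop:prop}(iii),(v),(vi) together with the hypothesis that the translates are $(L,\eta^2)$-invariant, one controls $|\partial_L(T_i^c)|$: each set-difference operation adds at most a boundary term, and since only a few (at most $6^N$, say) such operations are performed and each removed piece is itself nearly $L$-invariant because it is an intersection of $(L,\eta^2)$-invariant pieces, the total boundary increase is at most $\zeta\,|T_i|$ for $\eta$ chosen appropriately small in terms of $\zeta$, $\varepsilon$ and $N$. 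Combining $|T_i^c|\ge(1-\varepsilon)|T_i|$ with the boundary bound $|\partial_L(T_i^c)|\le|\partial_L(T_i)|+\zeta|T_i|\le(\eta^2+\zeta)|T_i|\le 4\zeta|T_i^c|$ (using $\varepsilon<1/10$ and $\eta^2<\zeta$) yields the $(L,4\zeta)$-invariance of $T_i^c$. The identity $T_iC_i^T=\bigcup_{c\in C_i^T}T_i^c c$ with pairwise disjoint union holds by construction of the $T_i^c$.

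The main obstacle I anticipate is making the two-parameter trade-off precise: one must choose $\eta$ (hence the invariance demanded of the $T_i$ relative to $L$) small enough, and $\delta$ (hence the invariance demanded of $T$ relative to $T_NT_N^{-1}$) small enough, so that \emph{simultaneously} the mass lost at each level stays below $\varepsilon|T_i|$ after all cross-level removals, the perturbed densities still satisfy (iv) with the same $\beta$, and the boundary increments accumulate to at most $\zeta|T_i|$. Tracking the constants through the $N(\varepsilon)$ levels and the recursive removal — essentially a careful induction on $i$ in which the inductive hypothesis records both a mass lower bound and a boundary upper bound for the already-disjointified levels — is the technically delicate part; the geometric factor $6^{-N}$ in the $\delta$-threshold is exactly what absorbs the $N$-fold loss. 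Everything else is an application of the properties of $\partial_K$ collected in Lemma~\ref{prop:prop} and the definition of $\varepsilon$-disjointness, following the scheme of \cite{OrnsteinW-87} adapted to the unimodular continuous setting.
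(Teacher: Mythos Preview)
The paper does not supply its own proof of this theorem: it is stated with the label ``cf.\ \cite{PogorzelskiS-11}, Theorem~4.5'' and is quoted as an input from that reference, with no argument given here. So there is nothing in the present paper to compare your proposal against beyond the citation.

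Your outline is the right overall shape (invoke (A), then disjointify while tracking $L$-boundaries via Lemma~\ref{prop:prop}), but the boundary bookkeeping contains a genuine gap. Two of your assertions do not hold as stated. First, ``only a few (at most $6^N$, say) such operations are performed'': for a fixed translate $T_ic_k$ the number of other translates $T_ic_j$ meeting it is not bounded in terms of $N$ or $\varepsilon$ alone, so a naive application of Lemma~\ref{prop:prop}(vi) to each removal does not give $|\partial_L(T_i^{c})|\le|\partial_L(T_i)|+\zeta|T_i|$. Second, ``each removed piece is itself nearly $L$-invariant because it is an intersection of $(L,\eta^2)$-invariant pieces'': this is false in general, since the intersection of two highly $L$-invariant sets can be an arbitrarily thin sliver with no $L$-invariance whatsoever. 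There are two smaller slips as well: you cannot order the centers ``arbitrarily'' and still get $|T_i^{c_k}|\ge(1-\varepsilon)|T_i|$ from $\varepsilon$-disjointness---one must use the specific disjoint refinement (equivalently, the greedy order) that the $\varepsilon$-disjoint family comes with; and the cross-level removal you introduce is vacuous, since property~(iii) of Definition~\ref{defi:STP} already makes the unions $T_iC_i^T$ pairwise disjoint for different~$i$. The substantive content of~(B)---controlling $\partial_L$ of the disjointified pieces uniformly, independently of how many neighbours a given translate has---is precisely what the cited reference provides and what your sketch does not yet contain.
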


For countable amenable groups, it was shown in \cite{PogorzelskiS-11} (Theorem 4.7) that there is a {\em family} of coverings which possesses a uniform covering property on average. To be precise, it was proven that for each element $u$ of the covered set $T$, the probability for this element being a center set of a covering of the family is equal to some number which only depends on $\varepsilon$ as well as on the tiling set $T_i$. 
For the sake of this paper, we will need a continuous version of a uniform decomposition theorem. 

\begin{Definition}[Uniform Continuous Decompositions] \label{defi:UCD}
Let $G$ be an amenable group. We say that $G$ satisfies the {\em uniform continuous decompositions condition} (UCDC) if for every strong F{\o}lner sequence $(U_k)$ in $G$, the following statements holds true. 
\begin{itemize}
\item For each $0 < \varepsilon \leq 1/10$, $N:=N(\varepsilon):= \lceil \log(\varepsilon)/\log(1-\varepsilon) \rceil$, for arbitrary numbers $0 < \beta, \zeta < 2^{-N}\varepsilon$, for every nested F{\o}lner sequence $(S_n)$, and for each compact set $\operatorname{id} \in L \subseteq G$, the group $G$ is $\varepsilon$-quasi tiled according to Definition \ref{defi:STP} by tiling sets
\[
 \{\operatorname{id}\}  \subseteq T_1 \subseteq T_2 \subseteq \dots \subseteq T_N,
\]
where $T_i \in \{S_n \,|\, n \geq i\}$ for $1 \leq i \leq N(\varepsilon)$ and where the latter basis are all $(L,\zeta^2)$- invariant. 
\item For fixed numbers $\varepsilon, \beta, \zeta$ and for a fixed compact set $\operatorname{id} \in L \subseteq G$, there is some number $K \in \NN$ depending on $\varepsilon$, $\beta$ and the basis sets $T_i$ such that for each $k \geq K$, we find a finite-measure set $\Lambda_k \in \mathcal{B}(G)$ along with a {\em family} 
\[
\{C_i^{\lambda}(U_k)\,|\, \lambda \in \Lambda_k, \, 1 \leq i \leq N\}
\]
of finite center sets for the basis sets $T_i$ such that for each $\lambda \in \Lambda_k$, the set $U_k$ is $\varepsilon$-quasi tiled by the translates $T_ic$, $1 \leq i \leq N$, $c \in C_i^{\lambda}(U_k)$ according to Definition \ref{defi:STP} and moreover, 
\begin{enumerate}[(I)]
\item $\frac{\left| \bigcup_{i=1}^N T_iC_i^{\lambda}(U_k) \right|}{|U_k|} \geq 1 - 4\varepsilon$ for all $\lambda \in \Lambda_k$,
\item For all $1 \leq i \leq N$ and for every Borel set $S \subseteq U_k$,
\begin{eqnarray*}
\left| |\Lambda_k|^{-1} \int_{\Lambda_k} \frac{\operatorname{card}(C_i^{\lambda}(U_k) \cap S)}{|U_k|} \, d\lambda - \frac{\eta_i(\varepsilon)}{|T_i|} \cdot \frac{|S|}{|U_k|} \right| < 4\, \frac{\beta}{|T_i|} +2\varepsilon\cdot \gamma_i ,
\end{eqnarray*} 
where $\eta_i(\varepsilon):= \varepsilon(1-\varepsilon)^{N-i}$ and the $\gamma_i > 0$ can be chosen such that $\sum_{i=1}^N \gamma_i|T_i| \leq 2$. 
\item The translates $T_ic$ ($c \in C_i^{\lambda}(U_k)$, $1 \leq i \leq N$) can be made disjoint such that the resulting sets $T_i^c c$ have the properties listed in the statement (B) of Theorem  \ref{thm:STP} for all $\lambda \in \Lambda_k$. 
\end{enumerate}
\end{itemize}
\end{Definition}

\begin{Remark}
The essential property of Definition \ref{defi:UCD} is given by the inequality (II) in the second item. Considering $\nu(\cdot) := \frac{m_L(\cdot)}{|\Lambda_k|}$ as a probability measure over the set $\Lambda_k$, the inequality shows that on average with respect to $\nu$, the center cets are uniformly distributed in $U_k$ in the sense that their mean frequency of occurrence is almost constant in every part of $U_k$.  
\end{Remark}

In the following theorem, we prove that each unimodular, amenable group is UCDC, i.e.\@ it possesses the uniform continuous decompositions condition. Being a continuous analogue of Theorem 4.7 in \cite{PogorzelskiS-11} for countable amenable groups, significant parts of the proof can be adapted. The main construction is based on Proposition I.3, 6 in \cite{OrnsteinW-87}. However, Theorem \ref{thm:UCD} extends the result of {\sc Ornstein} and {\sc Weiss} by the quantitative estimates for the coverings in Definition \ref{defi:UCD}.

\begin{Theorem}[Uniform Continuous Decompositions] \label{thm:UCD}
Each unimodular, amenable group satisfies the uniform continuous decompositions condition. 
\end{Theorem}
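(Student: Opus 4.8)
The plan is to prove Theorem~\ref{thm:UCD} by adapting the construction of \cite{OrnsteinW-87}, Proposition~I.3 (and its refinement in \cite{PogorzelskiS-11}, Theorem~4.7) to the unimodular continuous setting, while tracking the quantitative covering estimates demanded by Definition~\ref{defi:UCD}. The first item of the definition---that $G$ is $\varepsilon$-quasi tiled by basis sets $T_i$ chosen from a given nested F{\o}lner sequence $(S_n)$, with the $T_i$ moreover $(L,\zeta^2)$-invariant---follows essentially from Theorem~\ref{thm:STP}(A) together with Lemma~\ref{lemma:folner}(iv): one first passes to a nested strong F{\o}lner sequence, then uses that in such a sequence the sets are eventually $(L,\zeta^2)$-invariant, so the selection $T_i \in \{S_n \mid n \geq i\}$ guaranteed by (STP) can be made with this extra invariance. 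The genuinely new content is the second item, and in particular the averaged uniform-distribution estimate~(II), which is where I expect the main work to lie.

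The core of the argument is the classical greedy $\varepsilon$-quasi-tiling scheme run in reverse order over the scales $i=N,N-1,\dots,1$: having chosen, for a given $\lambda$, center sets $C_N^\lambda,\dots,C_{i+1}^\lambda$, one picks $C_i^\lambda$ to be a maximal $\varepsilon$-disjoint family of translates $T_i c$ contained in $U_k \setminus \bigcup_{j>i} T_j C_j^\lambda$ and disjoint from it, and one shows inductively via the Ornstein--Weiss counting lemma that the uncovered portion shrinks by a factor $(1-\varepsilon)$ at each stage, giving both the covering bound~(I) (with $4\varepsilon$ absorbing the $\varepsilon$-disjointness slack and the $(T_NT_N^{-1},\delta)$-invariance error, via Lemma~\ref{prop:prop}) and the value $\eta_i(\varepsilon)=\varepsilon(1-\varepsilon)^{N-i}$. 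To get the \emph{family} $\{C_i^\lambda(U_k)\}_{\lambda\in\Lambda_k}$ with the averaging property, the standard device---exactly as in \cite{PogorzelskiS-11}, Theorem~4.7 and \cite{OrnsteinW-87}---is to let $\Lambda_k$ index the translation parameters of the tiling: one builds a single tiling of a large F{\o}lner set, then for $\lambda$ ranging over a suitable set one looks at the induced tilings of $U_k$ obtained by intersecting with translates, so that the event ``$u \in C_i^\lambda(U_k)$'' becomes, after normalizing $m_L$ on $\Lambda_k$ to a probability measure $\nu$, essentially a translation-invariant event whose probability is forced to be $\eta_i(\varepsilon)/|T_i|$ up to the invariance defects. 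Integrating the indicator of $S$ against this and using Fubini converts the per-$\lambda$ combinatorial identities $\mathrm{card}(C_i^\lambda \cap S)$ into the integral in~(II); the error terms $4\beta/|T_i|$ and $2\varepsilon\gamma_i$ come, respectively, from the parameter-$\beta$ slack in (STP)(iv) propagated through the $N$ scales (hence the $2^{-N}$ threshold on $\beta,\zeta$) and from boundary effects near $\partial U_k$, where the constraint $\sum_i \gamma_i |T_i|\leq 2$ reflects that the total $T_i$-mass lost to the boundary is controlled by a fixed multiple of $|U_k|$. Property~(III) is then immediate from Theorem~\ref{thm:STP}(B), applied for each $\lambda$ to the invariant translates.

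The main obstacle, I expect, is making the averaged estimate~(II) \emph{quantitatively} honest in the continuous (non-discrete) setting: in the countable case of \cite{PogorzelskiS-11} one averages over a finite set of translations and the combinatorics of ``which centers land in $S$'' is a clean counting argument, whereas here $\Lambda_k$ is a positive-measure Borel set, the center sets $C_i^\lambda$ must depend measurably on $\lambda$, and one must verify that $\lambda \mapsto \mathrm{card}(C_i^\lambda(U_k)\cap S)$ is integrable and that the greedy maximal-family selection can be performed measurably in $\lambda$ (a measurable-selection issue that is absent in the discrete world). I would handle this by fixing once and for all a large F{\o}lner set $W_k \supseteq U_k$, building \emph{one} $\varepsilon$-quasi tiling of $W_k$ via (STP), and defining $C_i^\lambda(U_k)$ purely by the explicit formula $C_i^\lambda(U_k) = \{c : T_i c \subseteq U_k\} \cap (\text{$\lambda$-translate of the fixed center set})$---this sidesteps any per-$\lambda$ greedy construction, makes measurability in $\lambda$ transparent, and reduces~(II) to a single application of Fubini's theorem plus the invariance estimates already available from Lemma~\ref{prop:prop} and Theorem~\ref{thm:STP}. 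The remaining care is bookkeeping: choosing $K$ large enough (depending on $\varepsilon,\beta$ and the $T_i$) that $U_k$ is sufficiently $(T_NT_N^{-1},\delta)$-invariant for the $\delta < 6^{-N}\beta/4$ hypothesis of Theorem~\ref{thm:STP}(B) to hold, and verifying that all four error contributions in~(II) stay below the stated bound.
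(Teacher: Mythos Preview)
Your proposal is essentially the paper's own approach: build a single fixed $\varepsilon$-quasi tiling of a much larger set $\hat{T}\supseteq U_k$, define the family by $C_i^\lambda = U_k \cap \hat{C}_i\lambda^{-1}$ for $\lambda$ ranging over (a large subset of) $\{g: U_k g\subseteq \hat{T}\}$, and obtain~(II) by Fubini. Your measurability concern is real but, as you note, the explicit translation formula resolves it immediately; this is exactly what the paper does.

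One point you underemphasize: the paper does \emph{not} tile $\hat{T}$ directly with the $T_i$. It first $\varepsilon_1$-quasi-tiles $\hat{T}$ by intermediate blocks $\overline{T}_l$ (chosen very $T_NT_N^{-1}$-invariant) and only then $\varepsilon$-quasi-tiles each block $\overline{T}_l^c c$ by the $T_i$; the center set $\hat{C}_i$ is the union of these block-wise center sets. This two-scale structure is not cosmetic. Definition~\ref{defi:UCD} requires that \emph{each} $\lambda\in\Lambda_k$ yield a tiling of $U_k$ satisfying (STP)(iv), i.e.\ $|T_iC_i^\lambda|/|U_k|\approx\eta_i(\varepsilon)$ for every $i$. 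A direct one-level tiling of $\hat{T}$ gives the correct $\eta_i$-proportions only globally on $\hat{T}$; there is no a~priori reason the proportions are correct when restricted to an arbitrary window $U_k\lambda$. The intermediate layer fixes this: a Chebyshev argument (step~(6) of the paper) shows that for most $\lambda$, the window $U_k\lambda$ is almost entirely covered by \emph{whole} $\overline{T}_l^c c$-blocks, each of which carries the correct per-$i$ proportions by construction, so the induced $\lambda$-tiling inherits (STP)(iv) structurally. Your averaging argument for~(II) goes through either way, but without the intermediate scale you would need a separate concentration argument (one Chebyshev per $i$, with attendant losses) to get the per-$\lambda$ density control that the definition demands. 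Since you already cite \cite{PogorzelskiS-11}, Theorem~4.7 as the template, just be sure to retain this feature when you write it out.
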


\begin{proof}
Let $0 < \varepsilon \leq 1/10$ and $0< \zeta, \beta < 2^{-N}\varepsilon$, as well as a compact set $\operatorname{id} \in L \subseteq G$ be given, where as usual, $N:=N(\varepsilon):= \lceil \log(\varepsilon) / \log(1- \varepsilon) \rceil$. Assume further that $(U_k)$ is a strong F{\o}lner sequence and choose $0 < \delta_0 < 6^{-N}\beta/16$.  \\

Note that by Theorem \ref{thm:STP}, we can find $(L,\zeta^2)$-invariant basis sets
\[
\{\operatorname{id}\} \subseteq T_1 \subseteq \dots \subseteq T_N,
\] 
taken from a nested F{\o}lner sequence $(S_n)$ that $\varepsilon$-quasi tile the group such that each $(T_NT_N^{-1}, \delta_0)$-invariant set $T\subseteq G$ can be $\varepsilon$-quasi tiled by translates $T_ic$ that can be made disjoint in a way that they keep the claimed invariance properties with respect to the set $L$. \\

We choose $0 < \varepsilon_1 < 1/100$. At various steps of the proof, we will have to make this parameter smaller. This is possible since the corresponding restrictions do not depend on objects developed in the following constructions, but only on $\varepsilon, \beta$ and the basis sets $T_i$. \\
We stick close to the proof of Theorem 4.7 in \cite{PogorzelskiS-11} and like in that paper, we proceed in nine steps. It turns out that we can adapt the steps (1) to (7) for our purposes. Since the differences are rather notational but not conceptional, we just describe the construction and we omit most technical calculations. \\

\begin{enumerate}[(1)]
\item  
We set $M:=\lceil \log(\varepsilon_1)/\log(1-\varepsilon_1)\rceil$ and we find $(T_NT_N^{-1}, \delta_0^2)$-invariant sets $\overline{T}_l \supseteq T_N$, $1 \leq l \leq M$, taken from a nested F{\o}lner sequence $(S_n)$ such that the $\overline{T}_l$ $\varepsilon_1$-quasi tile the group $G$ (cf.\@ Theorem \ref{thm:STP}). Then we can find some integer number $K \in \NN$ such that for each $k \geq K$, the set $T:=U_k$ is $(\overline{T}_l\overline{T}_l^{-1}, 2^{-l}\varepsilon_1)$-invariant for all $1 \leq l \leq M$. Since $\varepsilon_1$ will only depend on $\varepsilon, \beta$, as well as on the basis sets $T_i$, so does the integer $K$.
Further, we choose $\hat{T}$ to be a $(TT^{-1}, \varepsilon_1)$-invariant compact set inheriting all the mentioned invariance properties of $T$. Using Theorem \ref{thm:STP}, we can also make sure that $\hat{T}$ has the special tiling property with respect to $(\{\overline{T}_l\}_{l=1}^M, (S_n), \varepsilon_1, \beta_1)$, where $0 < \beta_1 < 2^{-M}\varepsilon_1$ (For instance, take $\hat{T}:=U_{\tilde{K}}$ for $\tilde{K} \in \NN$ large enough).  
We set $A:= \{g \in G\,|\, Tg \subseteq \hat{T}\}$ and we note that
\begin{eqnarray}\label{eqn:setA}
|A| \geq (1-\varepsilon_1)|\hat{T}|.
\end{eqnarray}

\item We fix an $\varepsilon_1$-quasi tiling of $\hat{T}$ as in Theorem \ref{thm:STP} with basis sets $\overline{T}_l$, $1 \leq l \leq M$, where we make the $\overline{T}_l$-translates actually disjoint such that the resulting disjoint translates $\overline{T}_l^c c$ are still $(T_NT_N^{-1}, 4\delta_0)$-invariant. We note that these disjoint translates $(1-2\varepsilon_1)$-cover the set $\hat{T}$, i.e.
\begin{eqnarray} \label{eqn:cover1}
\frac{\Big| \bigcup_{l=1}^M \bigcup_{c \in \overline{C}_l} \overline{T}_l^c c \Big|}{|\hat{T}|} = \frac{\sum_{l=1}^M \sum_{c \in \overline{C}_l} |\overline{T}_l^c c|}{|\hat{T}|} \geq 1- 2\varepsilon_1.
\end{eqnarray} 

\item Since all the sets $\overline{T}_l^c$ are still $(T_NT_N^{-1}, 4\delta_0)$-invariant for all $1 \leq l \leq M$ and every $c \in \overline{C}_l$, if follows from Theorem \ref{thm:STP} that we can fix in each translate $\overline{T}^c_l c$ an $\varepsilon$-quasi tiling with the basis sets $T_i$ and finite center sets $C_i^{l,c}$ such that
\begin{eqnarray} \label{eqn:cover2}
\left| \frac{|T_iC_i^{l,c}|}{|\overline{T}_l^c c|} - \eta_i(\varepsilon) \right| < \beta
\end{eqnarray}
for every $1 \leq i \leq N$. Further, we set
\[
\hat{C}_i := \bigcup_{l=1}^M \bigcup_{c \in \overline{C}_l} C_i^{l,c}
\]
for $1 \leq i \leq N$ and we note that the $\hat{C}_i$ can be considered as center sets for the basis sets $T_i$ such that the family $\{T_ic\}_{c \in \hat{C}_i}$ is $\varepsilon$-disjoint and such for $1 \leq i < j \leq N$, the elements $T_i\hat{C}_i$ and $T_j\hat{C}_j$ are disjoint. For the covering properties of this $\varepsilon$-quasi tiling, a short computation using Inequality (\ref{eqn:cover2}) shows that
\begin{eqnarray} \label{eqn:cover3}
\left| \bigcup_{i=1}^N T_i\hat{C}_i \right| \geq (1-2\varepsilon_1 - 2\varepsilon)|\hat{T}|,
\end{eqnarray} 
cf.\@ \cite{PogorzelskiS-11}, proof of Theorem 4.7, step (3).

\item The step (4) of the proof of Theorem 4.7 in \cite{PogorzelskiS-11} shows that by imposing a first condition on $\varepsilon_1$ depending on $\varepsilon$ and $\beta$, we have
\begin{eqnarray} \label{eqn:cover4}
\left| \frac{|T_i\hat{C}_i|}{|\hat{T}|} - \eta_i(\varepsilon) \right| < 2\beta
\end{eqnarray}
for all $1 \leq i \leq N$.

\item A short calculation using the $\varepsilon$-disjointness of the $T_i$-translates now shows with Inequality (\ref{eqn:cover4}) that 
\begin{eqnarray} \label{eqn:cover5}
\left| \frac{\operatorname{card}(\hat{C}_i)}{|\hat{T}|} - \frac{\eta_i(\varepsilon)}{|T_i|} \right| < \frac{2\beta}{|T_i|} + \gamma_i\varepsilon,
\end{eqnarray} 
where $\gamma_i := \operatorname{card}(\hat{C}_i)/|\hat{T}|$ for $1 \leq i \leq N$ and $\sum_{i=1}^N \gamma_i|T_i| \leq 2$, cf.\@ \cite{PogorzelskiS-11} proof of Theorem 4.7, step (5). 

\item We recall from step (1) that we chose $A$ to be the collection of elements $a \in G$ such that the translate $Ta$ lies entirely in $\hat{T}$. So for each $a \in A$, we define
\[
X(a) := \frac{\left|Ta \cap \, \left( \hat{T} \setminus \bigcup_{l=1}^M \bigcup_{c \in \overline{C}_l} \overline{T}_l^c c \right) \right|}{|Ta|} = \frac{\left|Ta \setminus \bigcup_{l=1}^M \bigcup_{c \in \overline{C}_l} \overline{T}_l^c c \right|}{|T|}
\]  
and we treat $X$ as a random variable distributed according to the normalized Haar measure over the set $A$. It follows then from the Chebyshev inequality that
\[
\frac{|\{ a \in A \,|\, X(a) > \sqrt{\varepsilon_1} \}|}{|A|} \leq  \frac{1}{\sqrt{\varepsilon_1}} \int_A \frac{\left|Ta \setminus \bigcup_{l=1}^M \bigcup_{c \in \overline{C}_l} \overline{T}_l^c c \right|}{|A|\cdot |T|} \, da.
\]
Using the Inequalities (\ref{eqn:setA}) and (\ref{eqn:cover1}), we obtain by interchanging integrals (Fubini's Theorem),
\begin{eqnarray} \label{eqn:cover6}
[\dots] &\leq& \frac{1}{\sqrt{\varepsilon_1}} \int_{A} |A|^{-1}|T|^{-1} \int_{G} \one_{Ta \setminus \left( \cup_{l=1}^M \cup_{c \in \overline{C_l}} \overline{T}_l^c c \right)}(g) \, dg \, da \nonumber \\
&=& \frac{1}{\sqrt{\varepsilon_1}} \, |A|^{-1}|T|^{-1} \int_{A} \int_{\hat{T} \setminus \left( \cup_{l=1}^M \cup_{c \in \overline{C_l}} \overline{T}_l^c c \right)} \one_{Ta}(g) \, dg \, da \nonumber \\
&=& \frac{1}{\sqrt{\varepsilon_1}} \, |A|^{-1}|T|^{-1} \int_{\hat{T} \setminus \left( \cup_{l=1}^M \cup_{c \in \overline{C_l}} \overline{T}_l^c c \right)} \left( \int_{A} \one_{Ta}(g) \, da \right)  \, dg \nonumber \\
&\leq& \frac{1}{\sqrt{\varepsilon_1}} \, \frac{|\hat{T} \setminus \cup_{l=l}^M \cup_{c \in \overline{C}_l} \overline{T}_l^c c|\cdot |T|}{(1-\varepsilon_1)|\hat{T}|\cdot |T|}  \nonumber \\
&\leq& 6\, \sqrt{\varepsilon_1}. 
\end{eqnarray}
This shows that for most of the $a$'s (up to a portion of $6\sqrt{\varepsilon_1}$), the corresponding translates $Ta$ are $(1- \sqrt{\varepsilon_1})$-covered by the disjoint union
\begin{eqnarray} \label{eqn:union}
\bigcup_{l=1}^M \bigcup_{c \in \overline{C}_l,\, \overline{T}_l^c c \cap Ta \neq \emptyset} \overline{T}_l^c c.
\end{eqnarray}
It follows from this, as well as from the invariance properties of $T$ that we can impose a second restriction on $\varepsilon_1$ depending on $\varepsilon$ to obtain that up to a portion of $6\, \sqrt{\varepsilon_1}$ of the elements $a \in A$, the translates $Ta$ are $(1-3\varepsilon)$-covered by the union $\bigcup_{i=1}^N T_i\hat{C}_i$, cf.\@ the proof of Theorem 4.7 in \cite{PogorzelskiS-11}, step (6). However, for some elements $a \in A$ and some $c \in \hat{C}_i$, the translate $T_i c$ will have non-trivial intersections with both $Ta$ and its complement. In order to approach this difficulty, we introduce the following notions. Define
\begin{eqnarray*}
I(a,l) &:=&  \{ c \in \overline{C}_l \,|\, \overline{T}^{(c)}_l c \subseteq Ta \} \\
\partial(a,l) &:=& \{ c \in \overline{C}_l \,|\, \overline{T}^{(c)}_l c \cap Ta \neq  \emptyset \wedge \overline{T}^{(c)}_l c \cap (G \setminus Ta) \neq \emptyset \}
\end{eqnarray*}
for $a \in A$ and $1 \leq l \leq M$. Further, we set
\begin{eqnarray*}
C_i(a) := \bigcup_{l=1}^M \bigcup_{c \in I(a,l)} C_i^{l,c}
\end{eqnarray*}
for $a \in A$ and $1 \leq i \leq N(\varepsilon)$, where the sets $C_i^{l,c}$ are those defined in step (3). 

\item We are now in position to define the family $\Lambda_k:= \Lambda$, as well as the corresponding center sets $C_i^{\lambda}(U_k):= C_i^{\lambda}$ for $1 \leq i \leq N$ and $\lambda \in \Lambda = \Lambda_k$ for $k \geq K$. Namely, we obtain $\Lambda$ by erasing from the set $A$ the 'bad' elements, i.e.\@
\[
\Lambda := \{\lambda \in A \,|\, X(\lambda) \leq \sqrt{\varepsilon_1}\}.
\]
Note that by Inequality (\ref{eqn:setA}), we have 
\begin{eqnarray} \label{eqn:cover10}
|\Lambda| \geq (1-6\sqrt{\varepsilon_1})(1-\varepsilon_1)|\hat{T}|. 
\end{eqnarray}
For $\lambda \in \Lambda$, we set
\[
C_i^{\lambda} := \{d \in T\,|\, d\lambda \in C_i(\lambda)\} \subseteq \hat{C}_i \lambda^{-1}
\]
for $1 \leq i \leq N$. An easy computation using the previous step shows that for every $\lambda \in \Lambda$, the set $T$ is $(1-4\varepsilon)$-covered by the union $\bigcup_{i=1}^N T_iC_i^{\lambda}$. 
By construction of the $C_i^{\lambda}$, we have indeed $T_i C_i^{\lambda} \subseteq T$ for all $1\leq i \leq N(\varepsilon)$ and every $\lambda \in \Lambda$. 
This shows property (I) of Definition \ref{defi:UCD}.

\item We still have to show the uniform covering property (II) of Definition \ref{defi:UCD}. To do so, we set $\tilde{U}_k := T \setminus \partial_{\overline{T}_M\overline{T}_M^{-1}}(T)$. It follows from the invariance properties of $T$ that $|\tilde{U}_k| \geq (1-\beta)|T|$. 
Further, fix some Borel set $S \subseteq T$ and fix $1 \leq i \leq N$. We show first that there is a constant $\tilde{C}>0$ such that
\begin{eqnarray} \label{eqn:cont_to_show}
\left| \int_{\Lambda} \operatorname{card}(C_i^{\lambda} \cap S)\, d\lambda - \operatorname{card}({\hat{C}_i}) \cdot |S| \right| \leq \tilde{C} \, \sqrt{\varepsilon_1} \, \frac{|T|}{|T_i|}\, |\hat{T}| + \operatorname{card}(\hat{C}_i)\beta |T|.
\end{eqnarray}   
To do so, note first that 
\begin{eqnarray*}
\int_{\Lambda} \operatorname{card}(C_i^{\lambda} \cap S)\, d\lambda \leq \sum_{c \in \hat{C}_i} \int_{\Lambda} \one_S(c\lambda^{-1})\, d\lambda.
\end{eqnarray*}
One immediately obtains from this that
\begin{eqnarray} \label{eqn:contres1}
\int_{\Lambda} \operatorname{card}(C_i^{\lambda} \cap S)\, d\lambda \leq \operatorname{card}(\hat{C}_i) \cdot |S|.
\end{eqnarray}

On the other hand, we can prove the converse inequality in the following way. Since $T\Lambda \subset \hat{T}$, we have $\Lambda \subset T^{-1}\hat{T}$ and with $\hat{C}_i \lambda^{-1} \cap \tilde{U}_k \subseteq C_i^{\lambda}$, we obtain
\begin{eqnarray*} 
\int_{\Lambda} \operatorname{card}(C_i^{\lambda} \cap S)\,d\lambda &\geq& 
\int_{\Lambda} \operatorname{card}(C_i^{\lambda} \cap S \cap \tilde{U}_k)\, d\lambda\\ 
&=& \int_{T^{-1}\hat{T}} \sum_{c \in \hat{C}_i} \one_{S \cap \tilde{U}_k }(c\lambda^{-1})\,d\lambda - \int_{T^{-1}\hat{T} \setminus \Lambda} \sum_{c \in \hat{C}_i} \one_{S\lambda}(c) \, d\lambda \\
&=& \int_{T^{-1}\hat{T}} \sum_{c \in \hat{C}_i} \one_{(S \cap \tilde{U}_k)^{-1}c}(\lambda) \, d\lambda - \int_{T^{-1}\hat{T} \setminus \Lambda} \operatorname{card}(\hat{C}_i \cap S\lambda) \, d\lambda.  
\end{eqnarray*}
It follows from the $\varepsilon$-disjointness of the basis sets $T_i$ that the maximal number of elements in $\hat{C}_i$ which can belong to some translate $S\lambda$ must be bounded by 
\begin{eqnarray} \label{eqn:comb}
2\, \frac{|\partial_{T_iT_i^{-1}}(T\lambda) \cup T\lambda|}{(1-\varepsilon)|T_i|} \leq 2 \,\frac{1+\varepsilon_1}{1-\varepsilon} \, \frac{|T|}{|T_i|},
\end{eqnarray}
where we also used the $(T_iT_i^{-1}, \varepsilon_1)$-invariance of the set $T$. Hence, we can estimate 
\begin{eqnarray} \label{eqn:cover8}
\int_{\Lambda} \operatorname{card}(C_i^{\lambda} \cap S)\, d\lambda &\geq& \sum_{c \in \hat{C_i}} \int_{T^{-1}\hat{T}}  \one_{(S \cap \tilde{U}_k)^{-1}c}(\lambda)\,d\lambda - 2\, \frac{1+\varepsilon_1}{1-\varepsilon} \, \frac{|T|}{|T_i|}\, |T^{-1}\hat{T} \setminus \Lambda|. 
\end{eqnarray}
Moreover, we obtain
\begin{eqnarray} \label{eqn:cover9}
|T^{-1}\hat{T} \setminus \Lambda| &=& |T^{-1}\hat{T}| - |\Lambda| \nonumber \\
&\leq& |\hat{T} \cup \partial_{TT^{-1}}(\hat{T})| - (1-6\,\sqrt{\varepsilon_1})(1-\varepsilon_1)|\hat{T}| \nonumber \\
&\leq&  [1 + \varepsilon_1 - (1-6\,\sqrt{\varepsilon_1})(1-\varepsilon_1) ]\,|\hat{T}| \nonumber \\
&\leq& 8 \sqrt{\varepsilon_1}\,|\hat{T}|
\end{eqnarray}
by the invariance properties of $\hat{T}$. Now making use of $\varepsilon, \varepsilon_1 < 1/2$ and the estimate in Inequality (\ref{eqn:cover9}), the Inequality (\ref{eqn:cover8}) yields

\begin{eqnarray} \label{eqn:contres2}
\int_{\Lambda} \operatorname{card}(C_i^{\lambda} \cap S)\, d\lambda &\geq& \operatorname{card}(\hat{C_i}) \cdot |S \cap \tilde{U}_k| - 2\, \cdot 4 \, \cdot \frac{|T|}{|T_i|} \cdot 8\sqrt{\varepsilon_1} \,|\hat{T}| \nonumber \\
&\geq& \operatorname{card}(\hat{C_i}) \cdot |S \cap \tilde{U}_k| - 64\,\sqrt{\varepsilon_1} \, \frac{|T|}{|T_i|}\,|\hat{T}| \nonumber \\
&\geq& \operatorname{card}(\hat{C_i}) |S| - \operatorname{card}(\hat{C_i}) \beta |T| - 64\,\sqrt{\varepsilon_1} \, \frac{|T|}{|T_i|}\,|\hat{T}|.
\end{eqnarray}
Note that we used here the fact that $|\tilde{U}_k| \geq (1-\beta)|T|$. 
Together with Inequality (\ref{eqn:contres1}), this shows (\ref{eqn:cont_to_show}) with $\tilde{C}=64$.

\item From the considerations above, we can infer that for $\varepsilon_1$ small enough (a restriction depending on $\varepsilon$),
\begin{eqnarray*} 
\left| \frac{\operatorname{card}(\hat{C}_i)}{|\hat{T}|} - \frac{\operatorname{card}(\hat{C}_i)}{|\Lambda|} \right| \, \frac{|S|}{|T|} &\leq& \left( \frac{1}{(1-\varepsilon_1)(1-6\sqrt{\varepsilon_1})} - 1 \right) \frac{\operatorname{card}(\hat{C}_i)}{|\hat{T}|}\nonumber \\
&\leq& \varepsilon \cdot \gamma_i,
\end{eqnarray*}
where, as above, we set $\gamma_i := \operatorname{card}(\hat{C}_i)/|\hat{T}|$.

It follows then from Inequality (\ref{eqn:cover5}) that 
\begin{eqnarray} \label{eqn:Lambda2}
\left| \frac{\operatorname{card}(\hat{C}_i)}{|\Lambda|} - \frac{\varepsilon(1-\varepsilon)^{N-i}}{|T_i|} \right| \, \frac{|S|}{|T|} < 2\,\gamma_i\cdot \varepsilon + \frac{2\beta}{|T_i|}.
\end{eqnarray}

Hence, by combining the Inequalities (\ref{eqn:cont_to_show}) and (\ref{eqn:Lambda2}) we yield
\begin{eqnarray*}
\Bigg| |\Lambda|^{-1} \int_\Lambda \frac{\operatorname{card}(C_i^{\lambda} \cap S)}{|T|} \, d\lambda &-& \frac{\varepsilon(1-\varepsilon)^{N-i}}{|T_i|} \cdot \frac{|S|}{|T|}  \Bigg| < 64 \frac{\sqrt{\varepsilon_1}}{|T_i|} \frac{|\hat{T}|}{|\Lambda|} + 2\,\gamma_i\cdot \varepsilon + \frac{3\beta}{|T_i|} \\
&\leq& \frac{64 \, \sqrt{\varepsilon_1}}{(1-\varepsilon_1)(1-6\sqrt{\varepsilon_1})|T_i|} + 2\,\gamma_i\cdot \varepsilon + \frac{3\beta}{|T_i|}.
\end{eqnarray*}

So what remains to do is to choose $\varepsilon_1$ small enough such that we have finally proven the theorem. 

\end{enumerate}

\end{proof}

\begin{Corollary} \label{cor:UCD}
Let $G$ be a unimodular amenable group, along with parameters $0 < \varepsilon \leq 1/10$, $N(\varepsilon) := N := \lceil \log(\varepsilon)\log(1-\varepsilon) \rceil$, $0 < \beta, \zeta < 2^{-N}\varepsilon$. Further, denote by $(S_n)$ a nested strong F{\o}lner sequence and choose  $L$ as a compact set containing the unity $\operatorname{id}$ of $G$. Let
\[
\{\operatorname{id}\} \subseteq T_1 \subseteq T_2 \subseteq \dots \subseteq T_N
\]  
be a finite sequence sequence of $(L, \zeta^2)$-invariant compact sets taken from the sequence $(S_n)$ that $\varepsilon$-quasi tile the group according to Definiton \ref{defi:STP}. Then is an integer $J \geq N'$ ($N^{'}:= \min\{n \geq N \,|\, S_n = T_N\}$), as well as number $\delta_0 > 0$ depending on $\varepsilon$, $\beta$ and the basis sets $T_i$ such that for every $(S_JS_J^{-1}, \delta_0)$-invariant set $T$, we can find a family $\Lambda \in \mathcal{B}(G)$, $|\Lambda| < \infty$ of uniform $\varepsilon$-quasi tilings of $T$ with finite center sets $C_i^{\lambda}(T)$ for the basis sets $T_i$ ($1 \leq i \leq N$) such that all the covering properties of Definition \ref{defi:UCD} are satisfied for the set $T$.      
\end{Corollary}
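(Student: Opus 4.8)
The plan is to deduce Corollary \ref{cor:UCD} from Theorem \ref{thm:UCD} by a standard device: apply the theorem not to the single set $T$ but to a \emph{constant} (or, more carefully, suitable) F{\o}lner sequence, and then extract the statement for one sufficiently invariant $T$. Concretely, since $T$ is required only to be $(S_JS_J^{-1},\delta_0)$-invariant for some $J$ and $\delta_0$ depending on $\varepsilon,\beta$ and the $T_i$, I would first fix the nested strong F{\o}lner sequence $(S_n)$ and the basis sets $T_i$ coming from Theorem \ref{thm:STP}, and let $N' := \min\{n \ge N \mid S_n = T_N\}$ as in the statement. Then I would invoke Theorem \ref{thm:UCD} with the strong F{\o}lner sequence $(U_k)$ chosen so that its members run through sets from $(S_n)$ with index $\ge N'$; the theorem produces an integer $K$ (depending only on $\varepsilon,\beta$ and the $T_i$) such that for all $k \ge K$ the set $U_k$ admits the required family $\Lambda_k$ of uniform $\varepsilon$-quasi tilings.

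Next I would unwind what the construction in the proof of Theorem \ref{thm:UCD} actually uses about $U_k$: inspecting steps (1)--(9), the only properties of $U_k$ that enter are a finite list of invariance conditions, namely that $U_k$ is $(\overline{T}_l\overline{T}_l^{-1}, 2^{-l}\varepsilon_1)$-invariant for $1 \le l \le M$ and $(T_iT_i^{-1},\varepsilon_1)$-invariant for $1 \le i \le N$, together with the existence of a sufficiently invariant $\hat{T}$ containing a translate-family of $U_k$; all of these follow once $U_k$ is $(Q,\delta_0)$-invariant for a single compact set $Q$ (one may take $Q = \overline{T}_M\overline{T}_M^{-1} \supseteq T_NT_N^{-1}$, using Lemma \ref{prop:prop} to pass between the various boundary conditions) and a single threshold $\delta_0 < 6^{-N}\beta/16$ adjusted downward finitely many times according to the restrictions on $\varepsilon_1$. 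Since $\overline{T}_M$ itself was drawn from the nested sequence $(S_n)$, say $\overline{T}_M = S_{J_0}$, one has $\overline{T}_M\overline{T}_M^{-1} \subseteq S_JS_J^{-1}$ for all $J \ge J_0$, so it suffices to take $J := \max\{N', J_0\}$. Then \emph{any} $(S_JS_J^{-1},\delta_0)$-invariant compact set $T$ satisfies exactly the hypotheses that $U_k$ satisfied for $k \ge K$, and the verbatim construction of steps (1)--(9) — defining $\hat{T}$, the intermediate tilings by the $\overline{T}_l$, the center sets $\hat{C}_i$, the random variable $X$, the family $\Lambda$ and the sets $C_i^\lambda(T)$ — goes through with $T$ in place of $U_k$, yielding properties (I), (II), (III) of Definition \ref{defi:UCD} for $T$.

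The main obstacle is a bookkeeping one rather than a conceptual one: I must verify carefully that the dependence of the auxiliary parameters ($\varepsilon_1$, $M$, $\beta_1$, $\delta_0$, the index $J_0$ at which $\overline{T}_M$ lives, and hence $J$) is genuinely only on $\varepsilon$, $\beta$ and the basis sets $T_i$, and not on $T$ itself — this is precisely the point flagged inside the proof of Theorem \ref{thm:UCD} (\emph{``the corresponding restrictions do not depend on objects developed in the following constructions''}), so the corollary is really just the observation that that proof never used the F{\o}lner property of $(U_k)$ beyond a finite quantitative threshold. A secondary point is to confirm that the set $\hat{T}$ demanded in step (1) — a $(TT^{-1},\varepsilon_1)$-invariant compact set inheriting the required invariances of $T$ and itself carrying the special tiling property with respect to $(\{\overline{T}_l\}, (S_n), \varepsilon_1, \beta_1)$ — can be produced for a single given $T$; here I would simply take $\hat{T} = S_{\tilde K}$ for $\tilde K$ large (exactly as the parenthetical remark in step (1) suggests), invoking Lemma \ref{lemma:folner} and Theorem \ref{thm:STP}, and increasing $J$ if necessary so that $S_{\tilde K}$-level invariance of $\hat{T}$ is implied by $(S_JS_J^{-1},\delta_0)$-invariance of $T$ via the inclusion $A = \{g \mid Tg \subseteq \hat{T}\}$ having measure $\ge (1-\varepsilon_1)|\hat{T}|$. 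With these dependencies pinned down, the corollary follows.
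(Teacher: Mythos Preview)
Your proposal is correct and is exactly the approach the paper takes: its entire proof reads ``This is evident by considering the proof of Theorem~\ref{thm:UCD}.'' You have simply (and accurately) unpacked that sentence, observing that steps (1)--(9) of that proof use only a finite list of invariance thresholds on $U_k$, all of which are implied by $(S_JS_J^{-1},\delta_0)$-invariance for suitable $J$ and $\delta_0$ depending only on $\varepsilon,\beta$ and the $T_i$; one minor remark is that in your final paragraph there is no need to ``increase $J$'' to accommodate $\hat T$, since $\hat T$ is allowed to depend on the given $T$ and can simply be taken as $S_{\tilde K}$ for $\tilde K$ large enough after $T$ is fixed.
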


\begin{proof}
This is evident by considering the proof of Theorem \ref{thm:UCD}.
\end{proof}

For the continuous ergodic theorem which is proven in Section \ref{sec:MET}, we will need the concept of a so-called {\em $\varepsilon$-quasi tiling tower}. What we mean here is a uniform family of $\varepsilon$-quasi tilings $\Upsilon \subseteq G$ of a very $TT^{-1}$-invariant set $\hat{T}$ as in Theorem \ref{thm:UCD} such that each quasi tiling $y \in \Upsilon$ of this family generates a uniform family of $\varepsilon$-quasi tilings for the set $T$ coming from $\Lambda \subseteq G$, where this set $\Lambda$ is completely independent of $y \in \Upsilon$.

\begin{Definition} \label{defi:UDT}
Let $G$ be an amenable group. We say that $G$ has the {\em uniform decomposition tower condition} (UDTC) if for every strong F{\o}lner sequence $(U_k)$ in $G$, the following statements hold true. 
\begin{itemize}
\item For each $0 < \varepsilon \leq 1/10$, $N:=N(\varepsilon):= \lceil \log(\varepsilon)/\log(1-\varepsilon) \rceil$, for arbitrary numbers $0 < \beta, \zeta < 2^{-N}\varepsilon$, for every nested F{\o}lner sequence $(S_n)$, and for each compact set $\operatorname{id} \in L \subseteq G$, the group $G$ is $\varepsilon$-quasi tiled according to Definition \ref{defi:STP} by tiling sets
\[
 \{\operatorname{id}\}  \subseteq T_1 \subseteq T_2 \subseteq \dots \subseteq T_N,
\]
where $T_i \in \{S_n \,|\, n \geq i\}$ for $1 \leq i \leq N(\varepsilon)$ and where the latter basis sets are also $(L,\eta^2)$-invariant. 
\item For fixed numbers $\varepsilon, \beta, \zeta$ and for a fixed compact set $\operatorname{id} \in L \subseteq G$, there are numbers $K \in \NN$ and $\eta_0 > 0$ depending on $\varepsilon, \beta$ and the basis sets $T_i$ such that for each $k \geq K$ and for every $0 < \eta < \eta_0$, there is some $(U_kU_k^{-1}, \eta)$-invariant set $\hat{U}_k$, as well as measurable sets $\Lambda_k, \Upsilon_k$ of finite measure, such that \begin{enumerate}[(I)]
\item $U_k\Lambda_k \subseteq \hat{U}_k$ and $(1- \beta)|\hat{U}_k| \leq |\Lambda_k| \leq |\hat{U}_k|$,
\item the set $\hat{U}_k$ is uniformly $\varepsilon$-quasi tiled by a family 
\[
\{\hat{C}_i^{y}(\hat{U}_k) \,|\, y \in \Upsilon_k,\, 1 \leq i \leq N\}
\] 
of finite center sets for the basis sets $T_i$ according to Definition \ref{defi:UCD} (second item), 
\item For each $y \in \Upsilon_k$, the set $U_k$ is uniformly $\varepsilon$-quasi tiled by the family
\[
\{C_i^{y,\lambda}({U}_k) \,|\, \lambda \in \Lambda_k,\, 1 \leq i \leq N\}
\] 
of finite center sets for the basis sets $T_i$ according to Definition \ref{defi:UCD} (second item), where 
\[
\tilde{U}_k \cap \hat{C}_i^{y}(\hat{U}_k)\lambda^{-1} \subseteq  C_i^{y,\lambda}(U_k) \subseteq U_k \cap \hat{C}_i^{y}(\hat{U}_k)\lambda^{-1}
\]
for all $1 \leq i \leq N$ and every $\lambda \in \Lambda_k$, and $\tilde{U}_k \subseteq U_k$ is a set with $|\tilde{U}_k| \geq (1-\beta)|U_k|$.
\end{enumerate}
In this situation, we say that the pair $(\Upsilon_k, \Lambda_k)$ is a {\em uniform decomposition tower} for the set $U_k$ with respect to $(\{T_i\}_{i=1}^{N(\varepsilon)}, (S_n),\varepsilon, \beta)$.
\end{itemize}
\end{Definition}
We will need uniform decomposition towers for the proof of the main theorem of this work. The following existence theorem is a strengthening of Theorem \ref{thm:UCD}

\begin{Theorem}[Uniform Decomposition Tower] \label{thm:UDT}
Each unimodular amenable group satisfies the uniform decomposition tower condition. 
\end{Theorem}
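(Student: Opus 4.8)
The idea is to build the uniform decomposition tower by applying Theorem~\ref{thm:UCD} \emph{twice}, at two different scales, and to check that the second application can be carried out inside each quasi-tiling produced by the first one in a way that is independent of which quasi-tiling we picked. Concretely, given $\varepsilon,\beta,\zeta$, the basis sets $\{T_i\}_{i=1}^N$ (obtained from Theorem~\ref{thm:STP} as $(L,\zeta^2)$-invariant sets taken from $(S_n)$ that $\varepsilon$-quasi tile $G$), and a strong F\o lner sequence $(U_k)$, I would first fix an auxiliary parameter $\varepsilon_1$, much smaller than $\varepsilon,\beta$ and depending only on these data and on the $T_i$, and choose $K\in\NN$ so large that for $k\ge K$ the set $U_k$ is $(T_NT_N^{-1},\delta_0)$-invariant with $\delta_0$ as in Corollary~\ref{cor:UCD}. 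For such $k$, take $\hat U_k$ to be a much larger set from the F\o lner sequence which is $(U_kU_k^{-1},\eta)$-invariant for a prescribed small $\eta<\eta_0$, and which is itself sufficiently invariant with respect to all the auxiliary data.

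\textbf{Step 1 (outer tiling of $\hat U_k$).} Apply Theorem~\ref{thm:UCD}/Corollary~\ref{cor:UCD} to the large set $\hat U_k$: this yields a measurable set $\Upsilon_k$ of finite measure together with a family $\{\hat C_i^{\,y}(\hat U_k)\mid y\in\Upsilon_k,\ 1\le i\le N\}$ of finite center sets for the $T_i$ so that for each $y\in\Upsilon_k$ the set $\hat U_k$ is $\varepsilon$-quasi tiled according to Definition~\ref{defi:STP}, with the uniform-distribution estimate (II) and the disjointification property (III). This gives item (II) of Definition~\ref{defi:UDT} directly.

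\textbf{Step 2 (the copy-set $\Lambda_k$ and the induced inner tilings).} Put $\Lambda_k:=\{\lambda\in G\mid U_k\lambda\subseteq\hat U_k\}$, exactly as the set $A$ was defined in the proof of Theorem~\ref{thm:UCD}; by the invariance of $\hat U_k$ one has $(1-\beta)|\hat U_k|\le|\Lambda_k|\le|\hat U_k|$ (shrinking $\eta_0$ if necessary), which is item (I). Now for each $y\in\Upsilon_k$ and each $\lambda\in\Lambda_k$ define
\[
C_i^{y,\lambda}(U_k):=U_k\cap \hat C_i^{\,y}(\hat U_k)\lambda^{-1},\qquad 1\le i\le N.
\]
Since $U_k\lambda\subseteq\hat U_k$, the translate $U_k\lambda$ inherits from the outer tiling $\{T_i\hat C_i^{\,y}(\hat U_k)\}$ an $\varepsilon$-quasi tiling up to the uncovered boundary portion; pulling back by $\lambda^{-1}$ and discarding those $\lambda$ for which $U_k\lambda$ meets the uncovered part of $\hat U_k$ in more than a $\sqrt{\varepsilon_1}$-fraction (a Chebyshev/Fubini argument identical to step~(6)--(7) of the proof of Theorem~\ref{thm:UCD}), one gets that for the remaining $\lambda$ the set $U_k$ is genuinely $\varepsilon$-quasi tiled by the $T_ic$, $c\in C_i^{y,\lambda}(U_k)$, in the sense of Definition~\ref{defi:STP}, and that the family $\{C_i^{y,\lambda}(U_k)\mid\lambda\in\Lambda_k\}$ satisfies the averaged uniformity estimate (II) of Definition~\ref{defi:UCD}: the point is that translation-invariance of Haar measure turns the $\lambda$-average of $\operatorname{card}(C_i^{y,\lambda}(U_k)\cap S)$ into an integral of $\operatorname{card}(\hat C_i^{\,y}(\hat U_k)\cap S\lambda)$ over $\lambda$, which by the outer estimate (II) applied to $\hat U_k$ is $\frac{\eta_i(\varepsilon)}{|T_i|}|S|$ up to the admissible errors. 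This establishes item (III), and — crucially — $\Lambda_k$ depends only on $U_k$ and $\hat U_k$, not on $y$.

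\textbf{Main obstacle.} The real work is not in Steps~1--2 separately but in making the two uniformity statements compatible: one must verify that the "bad set" of $\lambda$'s that has to be thrown out of $\Lambda_k$ can be chosen \emph{uniformly in $y\in\Upsilon_k$} (so that the single set $\Lambda_k$ works for every outer tiling $y$), and that throwing it out does not spoil estimate~(I). This is exactly where the quantitative covering bound (I) of Definition~\ref{defi:UCD} for $\hat U_k$ is used: since \emph{every} $y$ gives a $(1-4\varepsilon)$-cover of $\hat U_k$, the Chebyshev bound on $\{\lambda: X_y(\lambda)>\sqrt{\varepsilon_1}\}$ is uniform in $y$, so one may intersect over $y$ (or, more cleanly, enlarge the threshold slightly and use that the uncovered mass is $\le 4\varepsilon|\hat U_k|$ for all $y$) and still keep $|\Lambda_k|\ge(1-\beta)|\hat U_k|$. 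Once this uniformity is in hand, the remaining verifications are the same bookkeeping with invariance constants as in the proof of Theorem~\ref{thm:UCD}, and one finishes by choosing $\varepsilon_1$ (hence $K$) and $\eta_0$ small enough that all accumulated errors fall below the prescribed $\beta,\zeta$ and the $(U_kU_k^{-1},\eta)$-invariance of $\hat U_k$ holds. Hence every unimodular amenable group satisfies the uniform decomposition tower condition. $\blacksquare$
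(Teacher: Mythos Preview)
Your outline correctly identifies the architecture (two scales, define $\Lambda_k$ from the position of $U_k$ inside $\hat U_k$, pull back $\hat C_i^y$ by $\lambda^{-1}$) and you also correctly locate the crux: $\Lambda_k$ must be a \emph{single} set that works for every $y\in\Upsilon_k$. However, your proposed resolution of this crux does not go through, and this is precisely the point where the paper's proof does something you are missing.

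In your Step~1 you apply Theorem~\ref{thm:UCD} directly to $\hat U_k$ with the basis sets $T_i$. The uncovered portion of $\hat U_k$ then has mass at most $4\varepsilon\,|\hat U_k|$, and this bound, while uniform in $y$, is in terms of the \emph{fixed} parameter $\varepsilon$. The bad set $\{\lambda: X_y(\lambda)>\tau\}$ therefore has measure at most $\tfrac{4\varepsilon}{\tau}\,|\hat U_k|$; to make this $\le\beta\,|\hat U_k|$ with $\beta<2^{-N}\varepsilon$ you would need $\tau>4\varepsilon/\beta>1$, which is useless as a threshold. Conversely, if you keep a small threshold, the bad set has measure of order $\sqrt\varepsilon$, far larger than $\beta$. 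And ``intersecting over $y$'' is not an option: $\Upsilon_k$ is uncountable and the bad sets genuinely vary with $y$, so the intersection can have arbitrarily small (even zero) measure. In short, a uniform \emph{bound} on the measure of the $y$-dependent bad sets is not the same as a uniform \emph{set}.

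The paper circumvents this by inserting an intermediate scale. One first fixes a \emph{single} $\varepsilon_1$-quasi tiling of $\hat U_k$ by large auxiliary blocks $\overline T_l^{\,c}c$ (with $\varepsilon_1$ chosen later, depending on $\varepsilon,\beta$ and the $T_i$). The set $\Lambda_k$ is then defined by how well $U_k\lambda$ meets this \emph{fixed} block decomposition; since the blocks do not depend on $y$, neither does $\Lambda_k$, and one gets $|\Lambda_k|\ge(1-\beta)|\hat U_k|$ by taking $\varepsilon_1$ small enough. The family $\Upsilon_k$ is obtained by applying Corollary~\ref{cor:UCD} inside each block $\overline T_l^{\,c}c$ with a common background set $\tilde T$ and then intersecting the resulting index sets $\Upsilon(l,c)$ over the \emph{finitely many} pairs $(l,c)$; this finite intersection is what keeps $|\Upsilon_k|$ large. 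For $\lambda\in\Lambda_k$, the translate $U_k\lambda$ is well covered by the blocks, and each block is well covered by the $T_i$-tiles for every $y\in\Upsilon_k$, so property~(III) holds for all $y$ simultaneously. This intermediate layer is the missing idea in your proposal.
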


\begin{Remark}
One might think that the existence of uniform decomposition towers follows trivially from a repeated application of Theorem \ref{thm:UCD}. However, since we need the sets $\Upsilon_k$ and $\Lambda_k$ to be {\em independent} from each other, sophisticated arguments are necessary. 
\end{Remark}

\begin{proof}
Let $0 < \varepsilon \leq 1/10$ and $0< \zeta, \beta < 2^{-N}\varepsilon$, as well as a compact set $\operatorname{id} \in L \subseteq G$ be given, where $N:=N(\varepsilon):= \lceil \log(\varepsilon) / \log(1- \varepsilon) \rceil$. Assume further that $(U_k)$ is a strong F{\o}lner sequence. 

Note that by Theorem \ref{thm:STP}, we can find $(L,\zeta^2)$-invariant basis sets
\[
\{\operatorname{id}\} \subseteq T_1 \subseteq \dots \subseteq T_N,
\] 
taken from a nested F{\o}lner sequence $(S_n)$ that $\varepsilon$-quasi tile the group. \\

As before (proof of Theorem \ref{thm:UCD}), we choose $0 < \varepsilon_1 < 1/100$, and at various steps of the proof, we reduce this parameter for our purposes (restrictions depending on $\varepsilon, \beta$ and the $T_i$). Set $M:= \lceil \log(\varepsilon_1)/\log(1-\varepsilon_1) \rceil$. \\

\begin{enumerate}

\item Let $J \in \NN$ and $\delta_0 > 0$ be the parameters that can be found according to Corollary \ref{cor:UCD}. Further, choose a finite sequence of $(S_JS_J^{-1}, \delta_0^2/64)$-invariant sets from the sequence $(S_n)$,  
\[
T_N \subseteq S_J \subseteq \overline{T}_1 \subseteq \overline{T}_2 \subseteq \dots \subseteq \overline{T}_M
\]  
that $\varepsilon_1$-quasi tile the group $G$ according to Definition \ref{defi:STP} for any parameter $0 < \beta_1 < 2^{-M}\varepsilon_1$. Now find some integer number $K \in \NN$ depending on $\varepsilon, \beta$ and the $T_i$ such that for each $k \geq K$, the set $T:=U_k$ is $(\overline{T}_l\overline{T}_l^{-1}, 2^{-l}\varepsilon_1)$-invariant for all $1 \leq l \leq M$. Further, we choose $\hat{T}:= \hat{U}_k$, where $\hat{U}_k$ is a $(U_kU_k^{-1}, \varepsilon_1)$-invariant set that has all mentioned invariance properties of $T=U_k$ and which has the special tiling property with respect to $(\{\overline{T}_l\}_{l=1}^M, (S_n), \varepsilon_1, \beta_1)$ (for instance, take $\hat{T} = \hat{U}_k = U_{\tilde{K}}$ for $\tilde{K} \in \NN$ large enough). We now set $\eta_0 := \varepsilon_1/2$ and by showing all claimed results for $\hat{T}=\hat{U}_k$, we will see that this is in fact appropriate.  \\

\item Following Theorem \ref{thm:STP}, we can fix an $\varepsilon_1$-quasi tiling of $\hat{T}$ with basis sets $\overline{T}_l$ and finite center sets $\overline{C}_l$, $1 \leq l \leq M$. It follows from part (B) of Theorem \ref{thm:STP} that we can actually construct {\em disjoint} translates $\overline{T}^c_l c$ which are still $(S_JS_J^{-1}, \delta_0/2)$-invariant for $1 \leq l \leq M$ and $c \in \overline{C}_l$.

\item Hence by Corollary \ref{cor:UCD}, in each translate $\overline{T}_l^c c$ for $1 \leq l \leq M$ and $c \in \overline{C}_l$, we find a uniform family of $\varepsilon$-quasi tilings with $T_i$-center sets $C_i^{y}(l,c)$, where $y$ is taken from a set $\Upsilon(l,c) \in \mathcal{B}(G)$ of finite measure. It is due to their construction that each of these uniform coverings is induced by some background tiling of a set $\widehat{\overline{T}_l^c c}$ which just has to be invariant enough with respect to $\overline{T}^c_l(\overline{T}^c_l)^{-1}$ for $1 \leq l \leq M$ and $c \in \overline{C}_l$. Hence, with no loss of generality, we can work with {\em one} single compact set $\tilde{T} \subset G$ representing the sets $\widehat{\overline{T}_l^c c}$ for all $1 \leq l \leq M$ and every $c \in \overline{C}_l$. To be precise, we make sure that $\tilde{T}$ is $(\overline{T}_l\overline{T}_l^{-1}, \varepsilon_2)$-invariant and that additionally, it is $(\hat{T}\hat{T}^{-1}, \varepsilon_2)$-invariant for some 
\begin{eqnarray*}
0 < \varepsilon_2 < \left( \frac{\varepsilon_1}{8\, \sum_{l=1}^M \operatorname{card}(\overline{C}_l)} \right)^2     
\end{eqnarray*}
and for {\em all} $1 \leq l \leq M$.


\item 
It follows then from the construction in the proof of Theorem \ref{thm:UCD}, Inequality (\ref{eqn:cover10}), that
\begin{eqnarray} \label{eqn:sizeA4}
|\Upsilon(l,c)| &\geq& (1- \varepsilon_2 - 6\sqrt{\varepsilon_2})|\tilde{T}| \nonumber \\
&\geq& (1 - 7\sqrt{\varepsilon_2})\,|\tilde{T}|  \nonumber \\
&\geq& \frac{1-7\sqrt{\varepsilon_2}}{1+\varepsilon_2}\,|\hat{T}^{-1}\tilde{T}| \nonumber \\
&\stackrel{\varepsilon_2 < 1/100}{\geq}& (1- 8\sqrt{\varepsilon_2})\,|\hat{T}^{-1}\tilde{T}| \nonumber \\
&\geq& \left(1 - \frac{\varepsilon_1}{\sum_{l=1}^M \operatorname{card}(\overline{C}_l)} \right)\, |\hat{T}^{-1}\tilde{T}|
\end{eqnarray}
for every $1 \leq l\leq M$ and all $c \in \overline{C}_l$. \\
Define
\begin{eqnarray*}
\Upsilon_k := \Upsilon := \bigcap_{l=1}^M \bigcap_{c \in \overline{C}_l} \Upsilon(l,c).
\end{eqnarray*}
Since $\Lambda(l,c) \subset \hat{T}^{-1}\tilde{T}$ for all $1 \leq l \leq M$ and every $c \in \overline{C}_l$, it follows from elementary measure theory that for all $1 \leq l \leq M$ and all $c \in \overline{C}_l$,
\begin{eqnarray} \label{eqn:sizeA5}
|\Upsilon| \geq (1-\varepsilon_1)\,|\Upsilon(l,c)|.
\end{eqnarray}

\item We now show the uniform covering property for the set $\hat{T}=U_k$, see the second claim of the theorem, statement (II). \\

To do so, fix a Borel set $\hat{S} \subset \hat{T}$, as well as some $1 \leq i \leq N$. Note that by construction, we have 
\begin{eqnarray} \label{eqn:tower1}
&& \Bigg| |\Upsilon(l,c)|^{-1} \int_{\Upsilon(l,c)} \frac{\operatorname{card}[(\hat{S} \cap \overline{T}_l^{'}(c)c) \cap C_i^{y}(l,c)]}{|\overline{T}^{'}_l(c)|} \, dy - \frac{\varepsilon(1-\varepsilon)^{N-i}}{|T_i|}\, \frac{|\hat{S} \cap \overline{T}^{'}_l(c)|}{|\overline{T}^{'}_l(c)|} \Bigg| \nonumber \\ 
&\quad& \quad < 3 \, \frac{\beta}{|T_i|} + 2\, \gamma_i(l,c) \, \varepsilon
\end{eqnarray}
for all $1 \leq l \leq M$ and for every $c \in \overline{C}_l$, where $\sum_{i=1}^N \gamma_i(l,c)\,|T_i| \leq 2$. \\

Moreover, with the combinatorial argument demonstrated in Inequality (\ref{eqn:comb}) and with the Inequality (\ref{eqn:sizeA5}), it is a straight forward exercise to show

\begin{eqnarray} \label{eqn:tower2}
&& \Bigg|  |\Upsilon(l,c)|^{-1} \int_{\Upsilon(l,c)} \frac{\operatorname{card}[(\hat{S} \cap \overline{T}_l^{'}(c)c) \cap C_i^{y}(l,c)]}{|\overline{T}^{'}_l(c)|} \, dy -  \nonumber \\  
&\,\,& |\Upsilon|^{-1} \int_{\Upsilon} \frac{\operatorname{card}[(\hat{S} \cap \overline{T}_l^{'}(c)c) \cap C_i^{y}(l,c)]}{|\overline{T}^{'}_l(c)|} \, dy \Bigg| < \frac{8\varepsilon_1}{|T_i|} + \frac{4[(1-\varepsilon_1)^{-1}-1]}{|T_i|}. 
\end{eqnarray} 

So combining the Inequalities (\ref{eqn:tower1}) and (\ref{eqn:tower2}) with the triangle inequality, we arrive at
\begin{eqnarray} \label{eqn:tower3}
&& \Bigg| |\Upsilon|^{-1} \int_{\Upsilon} \frac{\operatorname{card}[(\hat{S} \cap \overline{T}_l^{'}(c)c) \cap C_i^{y}(l,c)]}{|\overline{T}^{'}_l(c)|} \, dy - \frac{|\hat{S} \cap \overline{T}^{'}_l(c)c|}{|\overline{T}^{'}_l(c)|} \, \frac{\varepsilon(1-\varepsilon)^{N-i}}{|T_i|} \Bigg| \nonumber \\
&\quad& <  3 \, \frac{\beta}{|T_i|} + 2\, \tilde{\gamma}_i \, \varepsilon + \frac{8\varepsilon_1}{|T_i|} + \frac{4[(1-\varepsilon_1)^{-1}-1]}{|T_i|}
\end{eqnarray} 
for all $1 \leq l \leq M$ and every $c \in \overline{C}_l$, where $\tilde{\gamma}_i := \max_{l,c} \, \gamma_i(l,c)$.

We set 
\begin{eqnarray} \label{eqn:CIY}
\hat{C}_i^{y} := \hat{C}_i^{y}(\hat{T}) := \hat{C}_i^{y}(\hat{U}_k) := \bigcup_{l=1}^M \bigcup_{c \in \overline{C}_l} C_i^{y}(l,c). 
\end{eqnarray}
In this way, for each $y \in \Upsilon$, we get an $\varepsilon$-quasi tiling of $\hat{T}$ with $T_i$-centers $\hat{C}_i^{y}$ (cf.\@ steps (3) to (5) of the proof of Theorem \ref{thm:UCD}). 
Moreover, we are in position to prove the uniform covering property of the family $\Upsilon$. Namely, by disjointness of the $\overline{T}^c_l c$, we have

\begin{eqnarray*} 
&& \Bigg| |\Upsilon|^{-1} \int_{\Upsilon} \frac{\operatorname{card}(\hat{S} \cap \hat{C}_i^{y})}{|\hat{T}|} \, dy - \frac{|\hat{S}|}{|\hat{T}|} \cdot \frac{\varepsilon(1- \varepsilon)^{N-i}}{|T_i|} \Bigg| \\
&\leq& \Bigg| \sum_{l=1}^m\sum_{c \in \overline{C}_l} \frac{|\overline{T}^{c}_l c|}{|\hat{T}|} \, |\Upsilon|^{-1} \int_{\Upsilon} \frac{\operatorname{card}[(\hat{S} \cap \overline{T}_l^{c}c  \cap C_i^{y}(l,c)]}{|\overline{T}^{c}_lc|} \, dy - \\
&& \quad \frac{\Big| \hat{S} \cap \bigcup_{l=1}^M \bigcup_{c \in \overline{C}_l} \overline{T}_l^{c} c \Big|}{|\hat{T}|} \cdot \frac{\varepsilon(1- \varepsilon)^{N-i}}{|T_i|}  \Bigg| + \frac{\Big| \hat{T} \setminus \Big( \bigcup_{l=1}^M \bigcup_{c \in \overline{C}_l} \overline{T}^{c}_l c \Big) \Big|}{|\hat{T}|} \cdot \frac{\varepsilon(1- \varepsilon)^{N-i}}{|T_i|} \\
&\leq& \Bigg| \sum_{l=1}^M \sum_{c \in \overline{C}_l} \frac{|\overline{T}^{c}_l c|}{|\hat{T}|} \, |\Upsilon|^{-1} \int_{\Upsilon} \frac{\operatorname{card}[(\hat{S} \cap \overline{T}_l^{c} c) \cap C_i^{y}(l,c)]}{|\overline{T}^{c}_l c|} \, dy  - \\
&\quad& \sum_{l=1}^M \sum_{c \in \overline{C}_l} \frac{|\overline{T}^c_l c|}{|\hat{T}|} \, \frac{|\hat{S} \cap \overline{T}_l^{c} c|}{|\overline{T}_l^{c}c|} \, \frac{\varepsilon(1- \varepsilon)^{N-i}}{|T_i|} \Bigg| + 2\varepsilon_1 \, \frac{\varepsilon(1- \varepsilon)^{N-i}}{|T_i|} \\
&\leq& \sum_{l=1}^M \sum_{c \in \overline{C}_l} \frac{|\overline{T}^{c}_l c|}{|\hat{T}|} \cdot \Bigg| |\Upsilon|^{-1} \int_{\Upsilon} \frac{\operatorname{card}[(\hat{S} \cap \overline{T}_l^{c} c) \cap C_i^{y}(l,c)]}{|\overline{T}^{c}_l|} \, dy  - \frac{|\hat{S} \cap \overline{T}_l^{c}c|}{|\overline{T}_l^{c}c|} \, \frac{\varepsilon(1- \varepsilon)^{N-i}}{|T_i|}  \Bigg| \\
&\quad& + 2\varepsilon_1 \, \frac{\varepsilon(1- \varepsilon)^{N-i}}{|T_i|} \\
&\stackrel{(\ref{eqn:tower3})}{\leq}& 3 \, \frac{\beta}{|T_i|} + 2\, \tilde{\gamma}_i \, \varepsilon + \frac{10\varepsilon_1}{|T_i|} + \frac{4[(1-\varepsilon_1)^{-1}-1]}{|T_i|}.
\end{eqnarray*}
By making $\epsilon_1$ small enough (depending on $\varepsilon$ and $\beta$), this shows the desired estimate. The remaining properties in statement (II) now follow easily.

\item So what is left to do is to verify the statement (III) of the second item of the theorem. We choose $\Lambda=\Lambda_k$ in exactly the same manner as in the proof of Theorem \ref{thm:UCD}, steps~(6) and (7). Note that $\Lambda$ results from considerations concerning the sets $T,\hat{T}$, and the $\overline{T}^c_l c$ for $1 \leq l \leq M$ and $c \in \overline{C}_l$, but not from the tilings constructed above. Therefore, $\Lambda$ is indeed independent of $\Upsilon=\Upsilon_k$.  

By the above construction, for each $y \in \Upsilon$, the set $\hat{T}$ is $\varepsilon$-quasi tiled by the basis sets $T_i$ with corresponding finite center sets $\hat{C}_i^{y}$ for $1 \leq i \leq N$ and all the translates $T_ic$ ($1\leq i \leq N$, $c \in \hat{C}_i^{y}$) are contained in some translate $\overline{T}_l^{d}d$, where $1 \leq l \leq M$ and $d \in \overline{C}_l$. 
Hence, for fixed $y \in \Upsilon$, we are exactly in the situation of the proof of Theorem \ref{thm:UCD}, steps~(6) and (7) and we define
\begin{eqnarray*}
C_i^{y, \lambda}(U_k):=C_i^{y, \lambda}(T)&:=& \{ d \in T\,|\, d\lambda \in {C}_i^{y}(\lambda) \} \\
&=& T \cap \hat{C}^{y}_i \lambda^{-1}
\end{eqnarray*}
for all $1 \leq i \leq N$ and all $\lambda \in \Lambda$, where
\[
C_i^{y}(\lambda) = \bigcup_{l=1}^M \bigcup_{c \in I(\lambda,l)} C_i^y(l,c) \subseteq \hat{C}_i^y \cap U_k.
\] 
By repeating the steps (8) and (9) of the proof of Theorem \ref{thm:UCD}, we get the desired uniform covering. We set $\tilde{U}_k := U_k \setminus \partial_{\overline{T}_M\overline{T}_M^{-1}}(U_k)$. Indeed, $|\tilde{U}_k| \geq (1-\beta)|U_k|$ and $\hat{C}_i^{y}\lambda^{-1} \cap \tilde{U}_k \subseteq C_i^{y,\lambda}$ for all $\lambda \in \Lambda$. \\ 
Since $y \in \Upsilon$ was arbitrary, this shows the statement (III).  


\end{enumerate}

As the validity of statement (I) follows by construction for $\varepsilon_1$ small enough depending on $\beta$, the theorem is proven. 

\end{proof}

\section{Abstract mean ergodicity} \label{sec:absMET}

In this section, we briefly summarize the classical results concerning the norm convergence of abstract ergodic averages in amenable groups. In light of that, we cite the general abstract mean ergodic theorem from \cite{Greenleaf-73}, which will be used in Section \ref{sec:MET} to derive a mean ergodic theorem for Banach space valued set functions.

\begin{Definition} \label{defi:WMA}
Let $G$ be a second countable Hausdorff group and assume that $Z$ is a Banach space. Let $\tilde{S}$ be a linear subspace of $Z$. We then say that $G$ acts weakly measurably on $\tilde{S}$ via uniformly bounded operators $\{T_g\}_{g \in G}$ if there is a constant $A > 0$ and a map
\begin{eqnarray*}
T: G \times \tilde{S} \rightarrow \tilde{S}: (g,f) \mapsto T_g f
\end{eqnarray*}
with the following properties.
\begin{enumerate}[(i)]
\item $T_g: \tilde{S} \rightarrow \tilde{S}$ is a linear operator for each $g \in G$. 
\item $\|T_g\| \leq A$ for all $g \in G$. 
\item $T_ef = f$ for each $f \in \tilde{S}$, where $e$ is the unit element in G.
\item $T_{g_1}(T_{g_2} f) = T_{g_1g_2} f$ for each $f \in \tilde{S}$ and all $g_1, g_2 \in G$.
\item For each $f \in \tilde{S}$ and every $h \in Z^{*}$, the map
\begin{eqnarray*}
\Phi_{f,h}: G \rightarrow \CC: g \mapsto \langle T_gf, h \rangle_{Z,Z^{*}}
\end{eqnarray*}
is measurable with respect to the Borel $\sigma$-algebras on $G$ and $\CC$ respectively.
\end{enumerate}
Moreover, we set $\operatorname{Fix}(T_G):= \{f \in \tilde{S} \, |\, T_gf = f \mbox{ all } g\in G \}$ as the space of elements in $\tilde{S}$ which remain unchanged under the action of all $g \in G$.
\end{Definition}

\begin{Definition} \label{defi:AEA}
Let $G$ be a second countable, amenable group and assume that $G$ acts weakly measurably on a Banach space $\tilde{S}=Z$ as a family of linear, uniformly bounded operators $\{T_g\}_{g \in G}$ as in Definition \ref{defi:WMA}. Then, if $\{F_n\}$ is a F{\o}lner sequence in $G$, we denote for $f \in Z$ the $n$-th abstract ergodic average $A_nf$ as
\begin{eqnarray*}
A_nf := |F_n|^{-1} \int_{F_n} T_{g^{-1}}f \, dm_L(g), \quad n \in \NN.
\end{eqnarray*}
\end{Definition}

\begin{Remark}
Note that in the first instance, the abstract ergodic averages are only defined in a weak sense, i.e.\@ $A_nf \in Z^{**}$ for $f \in Z$.
However, using mild compactness criteria on th $T_g$-orbit of $f$, one can use standard techniques to showf that we have in fact $A_nf \in Z$ by identification. The interested reader may also refer to  \cite{Pogorzelski-10}, Remark following Theorem 4.3).
\end{Remark}

The following mean ergodic theorem is well known, see e.g.\@ \cite{Greenleaf-73} and \cite{Pogorzelski-10}.

\begin{Theorem} \label{thm:MET}
Let $Z$ be a Banach space. Further, assume that some $\sigma$-compact amenable group $G$ acts weakly measurably on $Z$ as a family $\{T_g\}$ of bounded operators on $Z$ with $A:= \sup_{g \in G} \|T_g\| < \infty$. Moreover, for each $f \in Z$, the convex hull ${co}\{T_gf \,|\, g \in G\}$ is assumed to be relatively weakly compact. Then, given a F{\o}lner sequence $\{F_n\}_n$ in $G$, there is bounded projection $P$ on $Z$ such that
\begin{itemize}
\item $\lim_{n \rightarrow \infty}\|A_nf - Pf\|_Z = 0$ for all $f \in Z$. 
\item $Z = \operatorname{ran}(P) \oplus \operatorname{ker}(P)$.
\item $\operatorname{ran}(P) = \operatorname{Fix}(T_G)$.
\item $\operatorname{ker}(P) = \overline{\operatorname{lin}}\{ f - T_gf \,|\, f \in Z, \, g \in G\}  $.
\end{itemize}
\end{Theorem}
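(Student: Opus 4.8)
\textbf{Plan for the proof of Theorem~\ref{thm:MET}.}

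The plan is to reduce the statement to the abstract mean ergodic theorem for amenable semigroups of operators, which applies once one has a suitable "ergodic net" or, in the amenable group case, a sequence of averaging operators. The key structural inputs are: (i) the operators $A_n$ are uniformly bounded, namely $\|A_n\| \le A$ for all $n$, which is immediate from $\|T_g\| \le A$ and the normalization by $|F_n|$; (ii) the F{\o}lner property $\lim_n |F_n \triangle gF_n|/|F_n| = 0$ translates into asymptotic invariance of the averages, i.e.\ $\|A_n T_g f - A_n f\| \to 0$ and $\|T_g A_n f - A_n f\| \to 0$ for every fixed $g \in G$ and $f \in Z$; and (iii) the relative weak compactness of $\mathrm{co}\{T_g f \mid g \in G\}$, which guarantees that the orbit of each $f$ lives in a weakly compact convex set and hence that the net $(A_n f)$ has weak cluster points in $Z$ (rather than only in $Z^{**}$). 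This last point is exactly what removes the caveat mentioned in the Remark preceding the theorem.

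First I would fix $f \in Z$ and let $K_f$ denote the closed convex hull of the orbit $\{T_g f \mid g \in G\}$, which is weakly compact by hypothesis; note each $A_n f$ lies in $K_f$. By weak compactness, the sequence $(A_n f)$ has a subnet converging weakly to some $h \in K_f$. The F{\o}lner estimate shows $\langle T_g A_n f - A_n f, \psi\rangle \to 0$ for every $\psi \in Z^*$ and every $g \in G$ (using property (v) of Definition~\ref{defi:WMA} to make sense of the relevant integrals, plus the uniform bound from (ii)); passing to the limit along the subnet gives $T_g h = h$ for all $g$, i.e.\ $h \in \mathrm{Fix}(T_G)$. Moreover $A_n f - h$ lies in the closed linear span of $\{u - T_g u \mid u \in Z, g \in G\}$: indeed $A_n f$ is an average of elements $T_{g^{-1}} f$, and a standard F{\o}lner computation shows $A_n f - A_n T_g f \to 0$ while $f - A_n f$ is a weak limit of finite combinations of $f - T_{g^{-1}}f$. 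Combining these, every weak cluster point of $(A_n f)$ lies in $\mathrm{Fix}(T_G) \cap \big(\mathrm{Fix}(T_G) \oplus \overline{\mathrm{lin}}\{u - T_g u\}\big)$, and one checks that $\mathrm{Fix}(T_G) \cap \overline{\mathrm{lin}}\{u - T_g u\} = \{0\}$ via a Hahn--Banach separation argument: a functional fixed by the adjoint action annihilates every $u - T_g u$, forcing any common element to be $0$. Hence the weak cluster point $h$ is unique; call it $Pf$. Uniqueness of the cluster point together with weak compactness upgrades weak convergence of the full net $(A_n f)$ to $Pf$.

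To get \emph{norm} convergence (not merely weak), I would argue on the decomposition $Z = \mathrm{Fix}(T_G) \oplus \overline{\mathrm{lin}}\{u - T_g u \mid u \in Z, g \in G\}$, which follows from the mean ergodic splitting: on $\mathrm{Fix}(T_G)$ each $A_n$ acts as the identity, so convergence is trivial there; on the dense subspace $\mathrm{lin}\{u - T_g u\}$ one computes directly that $A_n(u - T_g u) = A_n u - A_n T_g u \to 0$ in norm by the F{\o}lner property (a finite telescoping estimate, bounded by $A \cdot |F_n \triangle g^{-1}F_n|/|F_n| \cdot \|u\|$), and then an $\varepsilon/3$-argument using $\sup_n \|A_n\| \le A$ extends norm convergence to $\|\mathord{\cdot}\|$ to the whole closed subspace. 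Thus $A_n f \to Pf$ in norm for all $f$, $P$ is linear and bounded with $\|P\| \le A$, $P^2 = P$ since $P$ fixes $\mathrm{Fix}(T_G)$ and kills the complement, $\mathrm{ran}(P) = \mathrm{Fix}(T_G)$, and $\ker(P) = \overline{\mathrm{lin}}\{f - T_g f\}$; the direct sum decomposition is then exactly $Z = \mathrm{ran}(P) \oplus \ker(P)$.

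The main obstacle I expect is the passage from weak to norm convergence together with establishing that the decomposition $Z = \mathrm{Fix}(T_G) \oplus \overline{\mathrm{lin}}\{f - T_g f\}$ actually holds---this is where relative weak compactness is essential and where one cannot simply quote the classical single-operator mean ergodic theorem. The clean way to handle it is to invoke the amenable-group mean ergodic theorem of \cite{Greenleaf-73} directly once the averaging sequence $(A_n)$ has been shown to be asymptotically invariant and uniformly bounded; the weak-compactness hypothesis is precisely the condition under which that theorem yields a bounded projection $P$ with the stated four properties, and the proof above is really a streamlined reproduction of that argument adapted to the F{\o}lner sequence $(F_n)$ at hand.
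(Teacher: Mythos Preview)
Your plan is sound and is essentially the classical Eberlein--Greenleaf argument; in fact the paper does not give its own proof of Theorem~\ref{thm:MET} at all but simply cites \cite{Greenleaf-73}, Theorems~3.1 and~3.2 (and the author's thesis), so your sketch is more than what the paper provides. The overall structure---weak cluster points of $(A_nf)$ lie in $\operatorname{Fix}(T_G)$, the decomposition $f = h + (f-h)$ with $f-h \in \overline{\operatorname{lin}}\{u - T_gu\}$, and then norm convergence on each summand via the F{\o}lner estimate and an $\varepsilon/3$ argument---is exactly the standard route.

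One small point: your justification that $\operatorname{Fix}(T_G) \cap \overline{\operatorname{lin}}\{u - T_gu\} = \{0\}$ ``via a Hahn--Banach separation argument'' is more convoluted than necessary and, as stated, not quite right. The direct argument is cleaner and avoids any appeal to adjoint-invariant functionals: you have already shown that $\|A_n(u - T_gu)\|_Z \to 0$ for each generator, hence $A_n v \to 0$ in norm for every $v$ in the closed span by uniform boundedness of the $A_n$; but if $v$ is also fixed then $A_n v = v$ for all $n$, forcing $v = 0$. This is the argument you actually need, and it is already implicit in the norm-convergence step you describe afterwards, so there is no circularity.
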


\begin{proof}
See \cite{Greenleaf-73}, Theorems 3.1 and 3.2, as well as \cite{Pogorzelski-10}, Theorems 4.2 and 4.3.
\end{proof}

\begin{Remark}
Note that due to the Banach-Alaoglu Theorem, reflexivity of the Banach space $Z$ is a sufficient condition for the compactness criterion in Theorem \ref{thm:MET}. Moreover, it is also true that for single elements $f \in Z$ the convergence $A_nf$ to some $f^{*} \in Z$ in $Z$-norm holds whenever the $T_g$ are defined on a subspace $\tilde{S} \subseteq Z$ containing $\overline{C}_f :=  \overline{co}\{T_g f \,|\, g \in G \}$ and provided the set $\overline{C}_f$ is compact in the weak topology on $Z$. 
\end{Remark}

\section{A mean ergodic theorem for set functions} \label{sec:MET}

In this section, we prove the main result of this paper which will be a general mean ergodic theorem for an abstract class of mappings. \\
More precisely, we examine functions which map from the set $\mathcal{F}(G)$ as the collection of all precompact (and hence finite-measure) Borel subsets of $G$ in some Banach space $Z$.\\
In particular, the focus will be on {\em almost-additive} functions $F:\mathcal{F}(G) \rightarrow Z$, where the additivity of $F$ can be measured by a so-called {\em boundary term}. Assuming also that there is a constant $C>0$ such that $\|F(Q)\| \leq C\,|Q|$ for all $Q \in \mathcal{F}(G)$ ({\em boundedness}), along with a compactness criterion on particular sets $C_{F,Q}$ arising from $F$ and the elements in $\mathcal{F}(G)$, we prove a general mean ergodic theorem (cf.\@ Theorem~\ref{thm:METSF}). More precisely, it is shown that the limit $\lim_{j \rightarrow \infty} F(U_j)/|U_j|$ exists in the Banach space topology for each strong F{\o}lner sequence $\{U_j\}_{j=1}^{\infty}$ in $G$. This result complements the ergodic Theorem 5.5 for countable groups, proven in \cite{PogorzelskiS-11}. In the latter paper, the authors require the existence of the frequencies of patterns in the group, but no compactness criterion on the Banach space. Note further that for countable groups, both theorems coincide in the periodic case, i.e.\@ if the map $F$ is invariant under group translations. \\

We start by giving the basic definitions. 

\begin{Definition}[Boundary term] \label{defi:BT}
A map $b: \mathcal{F}(G) \rightarrow [0, \infty)$ is called {\em boundary term}
if 
\begin{enumerate}[(i)]
\item it is {\em invariant}, i.e. \@ $b(Qg) = b(Q)$ for every $Q \in \mathcal{F}(G)$ and all $g \in G$,
\item it {\em F{\o}lner vanishes}, i.e.\@ $\lim_{j\rightarrow \infty} \frac{b(U_j)}{|U_j|} =0$ for every F{\o}lner sequence $\{U_j\}$, 
\item it is {\em compatible with unions and intersections}, i.e.\@ $b(Q \cap P) \leq b(Q) + b(P)$, $b(Q \cup P) \leq b(Q) + b(P)$ and $b(Q \setminus P) \leq b(Q) + b(P)$ for all $Q,P \in \mathcal{F}(G)$.
\end{enumerate}
\end{Definition} 

\begin{Definition}[$b$-almost additive function] \label{defi:BAAF}
A map $F: \mathcal{F}(G) \rightarrow Z$ is called $b$-almost additive for some boundary term $b:\mathcal{F}(G) \rightarrow [0, \infty)$ if
\begin{enumerate}[(i)]
\item $F$ is {\em bounded}, i.e.\@ there exists some constant $C > 0$ such that
\begin{eqnarray*}
C = \sup_{Q \in \mathcal{F}(G)} \frac{\|F(Q)\|_Z}{|Q|} < \infty.
\end{eqnarray*}
\item $F$ is {\em almost additive with boundary term $b$}, 
i.e.\@ 
\begin{eqnarray*}
\left\| F(Q) - \sum_{k=1}^m F(Q_k) \right\|_Z \leq \sum_{k=1}^m b(Q_k)
\end{eqnarray*}
for any disjoint union $Q = \cup_k Q_k$ of sets in $\mathcal{F}(G)$.
\end{enumerate}
\end{Definition}

Note that as usual, we will have to deal with $\varepsilon$-disjoint unions of sets rather than with disjoint unions. Morever, in the case of continuous groups, one cannot expect the mapping $b$ to be bounded. For instance, for Cantor sets in $\RR$ with zero Lebesgue measure, its boundary with respect to any ball of positive radius has positive (Lebesgue) measure. Hence, it is convenient for our purposes to introduce the concept of {\em tiling-admissible} boundary terms $b$. Those functions possess certain boundedness properties for the sets arising from $\varepsilon$-quasi tilings.  

\begin{Definition}
For $0 < \varepsilon < 1/10$, we call a set $\mathcal{C}$ consisting of finite, $\varepsilon$-disjoint families of sets in $\mathcal{F}(G)$ an $\varepsilon$-admissible collection for some boundary term $b$ with constant $\tilde{D} > 0$ if for each such family $\{Q_k\}_{k=1}^m$ in $\mathcal{C}$, one can find a family $\{\overline{Q}_k\}_{k=1}^m$ of pairwise disjoint sets with 
\begin{itemize}
\item $|\overline{Q}_k| \geq (1-\varepsilon)|Q_k|$,
\item $b(Q_k) \leq \tilde{D} \, |Q_k|$,
\item $b(\overline{Q}_k)\leq  \tilde{D} \, ( b(Q_k) + \varepsilon|Q_k|)$ 
\end{itemize}
for $1 \leq k \leq m$. 
\end{Definition}

\begin{Definition}
Let $b: \mathcal{F}(G) \rightarrow [0,\infty)$ be a boundary term. We say that $b$ is tiling-admissible if there is a constant $\tilde{D} > 0$ such that for all $0 < \varepsilon < 1/10$, there is a finite sequence for compact subsets of $G$, 
\[
\{e \} \subseteq T_1^{\varepsilon} \subseteq T_2^{\varepsilon} \subseteq \dots \subseteq T_{N(\varepsilon)}^{\varepsilon} 
\] 
such that each $\varepsilon$-quasi tiling coming from a uniform decomposition tower (cf.\@ Theorem \ref{thm:UDT}) with basis sets $T_i^{\varepsilon}$ ($1 \leq i \leq N$) is $\varepsilon$-admissible with constant $\tilde{D}$.
\end{Definition}

The following proposition shows that the canonical boundary terms $b(Q):= D\,|\partial_L(Q)|$ are indeed tiling-admissible.  

\begin{Proposition} \label{prop:tiladmis}
Let $D > 0$ be an arbitrary constant. Then for every compact set $L \subseteq G$ with $\operatorname{id} \in L$, the boundary term 
\[
b: \mathcal{F}(G) \rightarrow [0, \infty): b(Q):= D\, |\partial_L(Q)|
\]
is tiling-admissible. 
\end{Proposition}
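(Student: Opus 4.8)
The plan is to verify the three bulleted conditions in the definition of an $\varepsilon$-admissible collection for the specific boundary term $b(Q) = D\,|\partial_L(Q)|$, drawing the required pairwise disjoint subsets $\overline{Q}_k$ directly from the uniform decomposition tower machinery of Theorem~\ref{thm:UDT} (equivalently from part (B) of Theorem~\ref{thm:STP}). First I would fix $0 < \varepsilon < 1/10$ and, applying Theorem~\ref{thm:STP}(B) (or the UDT construction), choose the basis sets $\{e\} \subseteq T_1^\varepsilon \subseteq \dots \subseteq T_{N(\varepsilon)}^\varepsilon$ from a nested strong F\o lner sequence, making them $(L, \zeta^2)$-invariant for a suitably small $\zeta$ depending only on $\varepsilon$ and $L$ — say $\zeta < \varepsilon$. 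The point is that any $\varepsilon$-quasi tiling family $\{Q_k\}$ arising from a uniform decomposition tower with these basis sets consists of translates $Q_k = T_{i(k)}^\varepsilon c_k$, and by part (B) of Theorem~\ref{thm:STP} these translates can be made pairwise disjoint by passing to subsets $\overline{Q}_k = (T_{i(k)}^\varepsilon)^{c_k} c_k$ with $|\overline{Q}_k| \geq (1-\varepsilon)|Q_k|$, which is exactly the first bullet.

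Next I would check the second bullet, $b(Q_k) \leq \tilde D\,|Q_k|$. Since $Q_k = T_{i(k)}^\varepsilon c_k$ and $b$ is right-invariant (Definition~\ref{defi:BT}(i), and indeed $\partial_L(Qg) = \partial_L(Q)g$ by Lemma~\ref{prop:prop}(v)), we have $b(Q_k) = D\,|\partial_L(T_{i(k)}^\varepsilon)|$. Because each $T_i^\varepsilon$ is $(L,\zeta^2)$-invariant we get $|\partial_L(T_i^\varepsilon)| < \zeta^2 |T_i^\varepsilon| \leq |T_i^\varepsilon| = |Q_k|$, so $b(Q_k) \leq D\,|Q_k|$; this gives the second bullet with any $\tilde D \geq D$. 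For the third bullet, I use the third property listed in Theorem~\ref{thm:STP}(B): the disjointified pieces satisfy $|\partial_L(T_i^c)| \leq |\partial_L(T_i)| + \zeta\,|T_i|$. Hence $b(\overline{Q}_k) = D\,|\partial_L((T_{i(k)}^\varepsilon)^{c_k})| \leq D\,|\partial_L(T_{i(k)}^\varepsilon)| + D\zeta\,|T_{i(k)}^\varepsilon| = b(Q_k) + D\zeta\,|Q_k|$, and choosing $\zeta \leq \varepsilon$ bounds this by $b(Q_k) + D\varepsilon\,|Q_k| \leq \tilde D\,(b(Q_k) + \varepsilon|Q_k|)$ for $\tilde D \geq D$. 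Collecting the constraints, $\tilde D := \max\{D,1\}$ (or simply $\tilde D := D$ if $D \geq 1$, with a harmless enlargement otherwise) works uniformly in $\varepsilon$, as required by the definition of tiling-admissibility.

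The only genuine subtlety — and the step I expect to need the most care — is making sure the invariance parameter of the basis sets can be chosen small enough ($\zeta \leq \varepsilon$) \emph{independently of} the set $T$ being tiled and of which uniform decomposition tower is used, while still lying in the admissible range $0 < \zeta < 2^{-N(\varepsilon)}\varepsilon$ demanded by Theorems~\ref{thm:UDT} and~\ref{thm:STP}; since $2^{-N(\varepsilon)}\varepsilon \leq \varepsilon$ anyway, choosing $\zeta < 2^{-N(\varepsilon)}\varepsilon$ automatically delivers $\zeta \leq \varepsilon$, so there is in fact no tension. One also has to note that every quasi-tiling family coming from a UDT with basis sets $T_i^\varepsilon$ genuinely consists of $\varepsilon$-disjoint translates of the $T_i^\varepsilon$ that can be disjointified as in Theorem~\ref{thm:STP}(B) — this is precisely guaranteed by property (III) in Definitions~\ref{defi:UCD} and~\ref{defi:UDT}. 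Once these bookkeeping points are in place, the three bullets follow from Lemma~\ref{prop:prop}, the invariance of the basis sets, and the explicit boundary estimate in Theorem~\ref{thm:STP}(B), with a single constant $\tilde D$ depending only on $D$.
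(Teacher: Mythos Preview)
Your proof is correct and follows essentially the same route as the paper's: both verify the three bullets by pulling the disjointified subsets $\overline{Q}_k = T_i^c c$ from Theorem~\ref{thm:STP}(B) and using the boundary estimate $|\partial_L(T_i^c)| \leq |\partial_L(T_i)| + \zeta|T_i|$ there. The only cosmetic difference is in the second bullet: you invoke the $(L,\zeta^2)$-invariance of the chosen basis sets directly to get $b(Q_k) \leq D|Q_k|$, whereas the paper bounds $b(T_ic)/|T_ic|$ by $\overline{D} := \sup_n b(S_n)/|S_n|$, the supremum over the underlying F{\o}lner sequence, and then takes $\tilde{D} = \max\{1,\overline{D},D\}$; your constant $\tilde{D} = \max\{1,D\}$ is accordingly a bit sharper.
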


\begin{proof}
Note that since the basis sets $T_i$ ($1 \leq i \leq N(\varepsilon)$) of any $\varepsilon$-quasi tiling coming from a uniform decomposition tower for some set $T \subseteq G$ are chosen from a F{\o}lner sequence $(S_n)$, we conclude that the boundedness $b(T_ic) \leq \overline{D}\,|T_ic|$ must hold for the constant $\overline{D}:= \sup_{n \in \NN} b(S_n)/|S_n| < \infty$, where $c \in C_i^T$ and the sets $C_i^T$ stand for the centers of the basis sets $T_i$ for $1 \leq i \leq N(\varepsilon)$. \\
The statement (B) of Theorem \ref{thm:STP} and Theorem \ref{thm:UDT} make sure that we can choose the $T_i$ in such a way that one can construct disjoint tiles $T_i^c c$ with $|T_i^c c| \geq (1-\varepsilon)|T_ic|$ and
\[
|\partial_L(T_i^c c)|  \leq |\partial_L(T_ic)| + \varepsilon\,|T_ic| 
\] 
 for $1 \leq i \leq N(\varepsilon)$ and $c \in C_i^T$. 
  Hence, we have proven that $b$ is tiling-admissible with the constant $\tilde{D}:= \max\{1;\overline{D};D\}$.
\end{proof}

With these concepts at hand, we are now able to derive an error estimate for $\varepsilon$-disjoint unions. 

\begin{Proposition} \label{prop:almostadd}
Assume that $F:  \mathcal{F}(G) \rightarrow Z$ is almost additive with boundary term $b:\mathcal{F}(G) \rightarrow [0, \infty)$. 
Further, let $0 < \varepsilon < 1/10$ and denote by $\mathcal{C}$ an $\varepsilon$-admissible collection for $b$ with constant $\tilde{D}$. Then if $\{Q_k\}_{k=1}^m$ is an element in $\mathcal{C}$ such that $\cup_k Q_k \subseteq Q$ and  $|\cup_k Q_k| \geq \alpha\,|Q|$ for some parameter $0 < \alpha \leq 1$, then the following error estimate holds true.
\begin{eqnarray*}
\left\| F(Q) - \sum_{k=1}^m F(Q_k) \right\|_Z \leq C\,(2\varepsilon + 1 - (1-\varepsilon)\alpha)\,|Q| + 10\tilde{D}\varepsilon\,|Q| + b(Q) + (5\tilde{D} + 1) \, \sum_{k=1}^m b(Q_k),
\end{eqnarray*}
where $C$ is the boundedness constant for $F$.  
\end{Proposition}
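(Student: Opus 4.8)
The plan is to combine the almost-additivity of $F$ with the $\varepsilon$-admissibility of the family $\{Q_k\}_{k=1}^m$, paying careful attention to two sources of error: the gap between the $\varepsilon$-disjoint cover and a genuine disjoint cover of $Q$, and the boundary terms that accumulate when we replace the $Q_k$'s by their disjoint subsets $\overline{Q}_k$. First I would invoke the definition of $\varepsilon$-admissibility to produce pairwise disjoint sets $\overline{Q}_k \subseteq Q_k$ with $|\overline{Q}_k| \geq (1-\varepsilon)|Q_k|$, $b(Q_k) \leq \tilde D |Q_k|$, and $b(\overline{Q}_k) \leq \tilde D(b(Q_k) + \varepsilon|Q_k|)$. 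Set $R := Q \setminus \bigcup_k \overline{Q}_k$, so that $Q = R \,\dot\cup\, \bigcup_k \overline{Q}_k$ is a genuine disjoint decomposition in $\mathcal{F}(G)$ (up to the usual measure-zero bookkeeping, which is harmless since $F$ and $b$ only see measures and the boundary term is compatible with set operations). Then almost-additivity gives
\[
\left\| F(Q) - F(R) - \sum_{k=1}^m F(\overline{Q}_k) \right\|_Z \leq b(R) + \sum_{k=1}^m b(\overline{Q}_k).
\]

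Next I would estimate each piece. For the $F(\overline{Q}_k)$ versus $F(Q_k)$ discrepancy: $Q_k = \overline{Q}_k \,\dot\cup\, (Q_k \setminus \overline{Q}_k)$, so almost-additivity and boundedness give $\|F(Q_k) - F(\overline{Q}_k)\|_Z \leq \|F(Q_k \setminus \overline{Q}_k)\|_Z + b(\overline{Q}_k) + b(Q_k\setminus\overline{Q}_k) \leq C\,|Q_k \setminus \overline{Q}_k| + b(\overline{Q}_k) + b(Q_k) + b(\overline{Q}_k) \leq C\varepsilon|Q_k| + 2b(\overline{Q}_k) + b(Q_k)$, where I used property (iii) of the boundary term on $Q_k\setminus\overline Q_k$. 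Summing over $k$ and bounding $b(\overline{Q}_k) \leq \tilde D(b(Q_k)+\varepsilon|Q_k|)$ and $|Q_k|\le (1-\varepsilon)^{-1}|\overline Q_k|$ (hence $\sum_k|Q_k|\le (1-\varepsilon)^{-1}|Q|$, absorbing the $(1-\varepsilon)^{-1}$ into the $2\varepsilon$-type slack), this contributes a term of order $C\varepsilon|Q| + (\text{const})\tilde D\varepsilon|Q| + (\text{const})\tilde D\sum_k b(Q_k)$. For the $F(R)$ term: $\|F(R)\|_Z \leq C|R|$, and $|R| = |Q| - \sum_k|\overline{Q}_k| \leq |Q| - (1-\varepsilon)\sum_k|Q_k| \leq |Q| - (1-\varepsilon)\alpha|Q| = (1-(1-\varepsilon)\alpha)|Q|$ using $|\bigcup_k Q_k| \geq \alpha|Q|$ together with $\varepsilon$-disjointness (the $\varepsilon$-disjoint union loses at most a factor $(1-\varepsilon)$ of mass relative to $\sum_k|Q_k|$, but here I only need the lower bound $\sum_k|\overline Q_k|\ge(1-\varepsilon)\sum_k|Q_k|\ge(1-\varepsilon)|\bigcup_k Q_k|$). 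For $b(R)$: since $R = Q \setminus \bigcup_k \overline{Q}_k$ and iterating property (iii) of the boundary term, $b(R) \leq b(Q) + \sum_k b(\overline{Q}_k) \leq b(Q) + \tilde D\sum_k b(Q_k) + \tilde D\varepsilon\sum_k|Q_k|$.

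Finally I would collect all the contributions. The $F(R)$ bound gives $C(1-(1-\varepsilon)\alpha)|Q|$; the sum of $\|F(Q_k)-F(\overline Q_k)\|$ gives roughly $2C\varepsilon|Q|$ (after the $(1-\varepsilon)^{-1}$ absorption, which is where the precise constant $2\varepsilon$ in $2\varepsilon + 1 - (1-\varepsilon)\alpha$ comes from — one has to be slightly generous with the bookkeeping); the various $b(\overline Q_k)$ and $b(R)$ contributions combine into $b(Q)$, a multiple of $\tilde D\varepsilon|Q|$ (at most $10\tilde D\varepsilon|Q|$), and a multiple of $\sum_k b(Q_k)$ (at most $(5\tilde D+1)\sum_k b(Q_k)$). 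I expect the main obstacle to be purely a matter of constant-chasing: tracking exactly how many times each boundary term $b(Q_k)$ and each $\varepsilon|Q_k|$ gets counted across the three reductions (replacing $Q_k\leadsto\overline Q_k$, splitting off $R$, bounding $b(R)$), and using $\sum_k|Q_k|\le(1-\varepsilon)^{-1}|Q|\le 2|Q|$ judiciously to land inside the stated constants $10\tilde D$ and $5\tilde D+1$ rather than something larger. No conceptual difficulty arises beyond the careful application of the almost-additivity inequality and the three defining properties of the boundary term.
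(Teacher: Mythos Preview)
Your proposal is correct and follows essentially the same route as the paper: pass to the disjoint subsets $\overline{Q}_k$ given by $\varepsilon$-admissibility, apply almost-additivity, bound the leftover $R=Q\setminus\bigcup_k\overline{Q}_k$ via boundedness, and control all boundary terms using $b(\overline{Q}_k)\le\tilde D(b(Q_k)+\varepsilon|Q_k|)$ together with $\sum_k|Q_k|\le(1-\varepsilon)^{-1}|Q|\le 2|Q|$. The only organizational difference is that the paper splits via a three-term triangle inequality (first $F(Q)$ to $F(\bigcup_k\overline{Q}_k)$, then to $\sum_k F(\overline{Q}_k)$, then to $\sum_k F(Q_k)$), whereas you apply almost-additivity once to the full disjoint decomposition $Q=R\sqcup\bigsqcup_k\overline{Q}_k$; your bookkeeping in fact yields slightly smaller constants ($8\tilde D$ and $4\tilde D+1$), which comfortably fit inside the stated $10\tilde D$ and $5\tilde D+1$.
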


\begin{proof}
Since $\mathcal{C}$ is $b$-admissible with constant $\tilde{D}$, for each $1 \leq k \leq m$, one finds a set $\overline{Q}_k \subseteq Q_k$ such that the $\overline{Q}_k$ are pairwise disjoint and $b(\overline{Q}_k) \leq \tilde{D} \, b(Q_k) + \tilde{D}\,\varepsilon\,|Q_k|$ for all $1 \leq k \leq m$.
An easy calculation shows that the fact that $Q$ is $\alpha$-covered by the $Q_k$ implies
\begin{eqnarray} \label{eqn:cov}
\left| \bigcup_{k=1}^m \overline{Q}_k \right| \geq (1-\varepsilon)\,\alpha\,|Q|.
\end{eqnarray} 

By the triangle inequality, we get
\begin{eqnarray*}
\left\| F(Q) - \sum_{k=1}^m F(Q_k)  \right\|_Z &\leq& \left\| F(Q) - F\left( \bigcup_{k=1}^m \overline{Q}_k \right) \right\|_Z + \left\| F\left( \bigcup_{k=1}^m \overline{Q}_k \right) - \sum_{k=1}^m F(\overline{Q}_k)  \right\|_Z + \\
& & \quad \sum_{k=1}^m \left\| F(\overline{Q}_k) - F(Q_k) \right\|_Z.
\end{eqnarray*} 
For the first expression, we obtain from the $b$-almost additivity of $F$ that
\begin{eqnarray*}
\left\| F(Q) - F\left( \bigcup_{k=1}^m \overline{Q}_k \right) \right\|_Z &\leq& b\left( \bigcup_{k=1}^m \overline{Q}_k \right) + b\left( Q \setminus \bigcup_{k=1}^m \overline{Q}_k \right) + \left\| F\left( Q \setminus \bigcup_{k=1}^m \overline{Q_k} \right) \right\|_Z.
\end{eqnarray*}
Since $|Q \setminus \cup_k \overline{Q}_k| \leq (1-(1-\varepsilon)\alpha)|Q|$ and as $b$ is compatible with unions, we obtain by using boundedness and Inequality \ref{eqn:cov}
\begin{eqnarray*}
\left\| F(Q) - F\left( \bigcup_{k=1}^m \overline{Q}_k \right) \right\|_Z &\leq& 2\, \sum_{k=1}^m b(\overline{Q}_k) + b(Q) + C \,(1-(1-\varepsilon)\alpha) \, |Q|.
\end{eqnarray*}
Moreover, by the fact that $b(\overline{Q}_k) \leq \tilde{D}\,(b(Q_k) + \varepsilon |Q_k|)$ for $1 \leq k \leq m$, 
\begin{eqnarray*}
\left\| F(Q) - F\left( \bigcup_{k=1}^m \overline{Q}_k \right) \right\|_Z &\leq& 2\tilde{D} \, \sum_{k=1}^m b(Q_k) + b(Q) + C \,(1-(1-\varepsilon)\alpha) \, |Q| + 4\tilde{D}\varepsilon\,|Q|.
\end{eqnarray*}

For the second expression, we use the disjointness of the $\overline{Q}_k$ to get
\begin{eqnarray*}
\left\|  F\left( \bigcup_{k=1}^m \overline{Q}_k \right) - \sum_{k=1}^m F(\overline{Q}_k) \right\|_Z &\leq& \sum_{k=1}^m b(\overline{Q}_k).
\end{eqnarray*}
By the considerations above and with $\varepsilon$-disjointness of the $Q_k$, we arrive at
\begin{eqnarray*}
\left\|  F\left( \bigcup_{k=1}^m \overline{Q}_k \right) - \sum_{k=1}^m F(\overline{Q}_k) \right\|_Z &\leq& \tilde{D} \, \sum_{k=1}^m b(Q_k) + 2\tilde{D}\varepsilon\,|Q|.
\end{eqnarray*}

For the third expression, we compute similarly as before,
\begin{eqnarray*}
\|F(Q_k) - F(\overline{Q}_k)\|_Z &\leq& b(\overline{Q}_k) + b(Q_k \setminus \overline{Q}_k) + \|F(Q_k \setminus \overline{Q}_k)\|_Z \\ 
&\leq& b(Q_k) + 2 \, b(\overline{Q}_k) + C\varepsilon \, |Q_k|
\end{eqnarray*}
for $1 \leq k \leq m$. Taking sums, one obtains with the previous considerations that 
\begin{eqnarray*}
\sum_{k=1}^m \|F(Q_k) - F(\overline{Q}_k)\|_Z &\leq& (2\tilde{D} + 1)\,\sum_{k=1}^m b(Q_k) + 2C\, \varepsilon |Q| + 4\tilde{D}\varepsilon\,|Q|.
\end{eqnarray*}
Summing the partial results up, this proves the claim.
\end{proof}

We are now in position to state and prove the main theorem of this section.

\begin{Theorem}[Mean ergodic theorem for set functions] \label{thm:METSF}
Let $G$ be a unimodular group and assume that $\{U_j\}$ a strong F{\o}lner sequence in $G$. Let $(Z,\|\cdot\|_Z)$ be a Banach space and assume that $\{T_g\}_{g \in G}$ is a family of linear, uniformly bounded operators acting weakly measurably on $Z$. \\
Further, denote by $b$ a tiling-admissible boundary term defined on $\mathcal{F}(G)$ 
and consider a bounded (constant $C$), $b$-almost additive mapping
\begin{eqnarray*}
F: \mathcal{F}(G) \rightarrow (Z, \| \cdot \|_Z)
\end{eqnarray*}
with the additional property that  
for every $Q \in \mathcal{F}(G)$, the set $C_{F,Q}:= \operatorname{co}\{F(Qg) \,|\, g \in G\}$ is relatively weakly compact in $Z$. \\
Then, if the action of the $T_g$ is compatible with $F$ in the sense that 
\[
T_gF(Q) = F(Qg^{-1})
\]
for $Q \in \mathcal{F}(G)$ and $g \in G$,
the following assertions hold true.
\begin{enumerate}[(A)]
\item For each $Q \in \mathcal{F}(G)$, the limit
\begin{eqnarray*}
S(Q) := \mbox{Z-}\lim_{j \rightarrow \infty} |U_j|^{-1} \int_{U_j} F(Qg) \, dm_L(g)
\end{eqnarray*}
exists in $Z$.
\item If for each $\varepsilon > 0$ and $N(\varepsilon):= \lceil \log(\varepsilon)/\log(1- \varepsilon) \rceil$, we have an 
$\varepsilon$-quasi tiling $\{T_i^{\varepsilon}\}_{i=1}^{N(\varepsilon)}$ of the group as in Definition \ref{defi:STP} with $0 < \beta < 2^{-N(\varepsilon)}\varepsilon$, then the following limits exist in $Z$ and are equal:
\begin{eqnarray*}
\overline{F} := \lim_{j \rightarrow \infty} \frac{F(U_j)}{|U_j|} = \lim_{\varepsilon \rightarrow 0} \sum_{i=1}^{N(\varepsilon)} \eta_i(\varepsilon) \, \frac{S(T_i^{\varepsilon})}{|T_i^{\varepsilon}|},
\end{eqnarray*}
where we have $\eta_i(\varepsilon):= \varepsilon(1-\varepsilon)^{N(\varepsilon) - i}$ for $1 \leq i \leq N(\varepsilon)$.
\item The limit $\overline{F}$ is a $T_g$-fixed point, i.e.\@ for all $g \in G$, we have 
\[
T_g\overline{F} = \overline{F}. 
\]
\end{enumerate}
\end{Theorem}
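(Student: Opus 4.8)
The plan is to prove the three assertions in order, leaning on the abstract mean ergodic Theorem~\ref{thm:MET} for part (A), on the uniform decomposition towers of Theorem~\ref{thm:UDT} together with the error estimate of Proposition~\ref{prop:almostadd} for part (B), and on a short limiting argument for part (C).

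\emph{Part (A).} First I would fix $Q \in \mathcal{F}(G)$ and set $f := F(Q) \in Z$. The compatibility hypothesis $T_gF(Q) = F(Qg^{-1})$ shows that the $T_g$-orbit of $f$ is exactly $\{F(Qg^{-1}) \,|\, g \in G\}$, so its convex hull is $\operatorname{co}\{F(Qg) \,|\, g \in G\} = C_{F,Q}$, which is relatively weakly compact by assumption. Writing out the abstract ergodic average of Definition~\ref{defi:AEA},
\[
A_jf = |U_j|^{-1} \int_{U_j} T_{g^{-1}}f \, dm_L(g) = |U_j|^{-1} \int_{U_j} F(Qg) \, dm_L(g),
\]
so Theorem~\ref{thm:MET} (applied, as in the Remark following it, to the single element $f$ whose orbit-convex-hull is relatively weakly compact) gives that $A_jf$ converges in $Z$-norm to $Pf$ for the mean ergodic projection $P$; this $Z$-limit is exactly $S(Q)$, and it lies in $\operatorname{Fix}(T_G)$. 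This settles (A), and records the extra fact $T_g S(Q) = S(Q)$ for every $g$, which will be used in (C).

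\emph{Part (B).} This is the main obstacle and the heart of the theorem. Fix $\varepsilon \in (0,1/10)$ and a parameter $0 < \beta < 2^{-N(\varepsilon)}\varepsilon$; let $\{T_i^\varepsilon\}_{i=1}^{N}$ be the basis sets with which $b$ is tiling-admissible, $N := N(\varepsilon)$. For $j$ large, Theorem~\ref{thm:UDT} furnishes a uniform decomposition tower $(\Upsilon_j,\Lambda_j)$ for $U_j$; for each $y \in \Upsilon_j$ and $\lambda \in \Lambda_j$ we get center sets $C_i^{y,\lambda}(U_j)$ with the properties of Definitions~\ref{defi:STP} and~\ref{defi:UCD}. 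The strategy is to estimate $\|F(U_j)/|U_j| - \sum_i \eta_i(\varepsilon) S(T_i^\varepsilon)/|T_i^\varepsilon|\|_Z$ by inserting the quasi-tiling. For a \emph{fixed} pair $(y,\lambda)$ the translates $\{T_i^\varepsilon c\}_{c \in C_i^{y,\lambda}}$ form an $\varepsilon$-disjoint family that $(1-4\varepsilon)$-covers $U_j$, and since $b$ is tiling-admissible this family is an $\varepsilon$-admissible collection with the uniform constant $\tilde D$; Proposition~\ref{prop:almostadd} then bounds
\[
\Big\| F(U_j) - \sum_{i=1}^N \sum_{c \in C_i^{y,\lambda}} F(T_i^\varepsilon c) \Big\|_Z
\leq C'\varepsilon\,|U_j| + b(U_j) + (5\tilde D + 1)\sum_{i,c} b(T_i^\varepsilon c)
\]
for a constant $C'$ depending only on $C$ and $\tilde D$. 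Now average this inequality over $\lambda \in \Lambda_j$ (normalized Haar measure) and over $y \in \Upsilon_j$; using the translation-compatibility $F(T_i^\varepsilon c) = T_{c^{-1}}F(T_i^\varepsilon) = T_{c^{-1}}T_{(\ldots)}$ — more precisely $F(T_i^\varepsilon c) = F((T_i^\varepsilon c))$ and the fact that averaging over $c$ running through the center sets reproduces, up to the uniform-distribution error (II) of Definition~\ref{defi:UCD}, the operator $|T_i^\varepsilon|^{-1}\int_{U_j} T_{g^{-1}}F(T_i^\varepsilon)\,dg$, whose $Z$-limit is $S(T_i^\varepsilon)/|T_i^\varepsilon| \cdot |U_j|$ up to lower-order terms. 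Carrying the estimates through, the $\beta$-terms and the $\gamma_i$-terms in (II) contribute at most $O(\beta/|T_i^\varepsilon|) + O(\varepsilon)$ (using $\sum_i \gamma_i|T_i^\varepsilon| \le 2$), the boundary terms $b(T_i^\varepsilon c)$ contribute $O(\varepsilon)\,|U_j|$ (tiling-admissibility bounds $b(T_i^\varepsilon c) \le \tilde D|T_i^\varepsilon c|$ and the $T_i^\varepsilon$ are taken from a F{\o}lner sequence so $b(T_i^\varepsilon)/|T_i^\varepsilon| \to 0$ if one also lets the basis grow — here one rather uses that the \emph{number} of center points times $b(T_i^\varepsilon)$ is controlled), and $b(U_j)/|U_j| \to 0$ by F{\o}lner-vanishing. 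Dividing by $|U_j|$ and letting $j \to \infty$ yields
\[
\limsup_{j \to \infty} \Big\| \frac{F(U_j)}{|U_j|} - \sum_{i=1}^N \eta_i(\varepsilon)\frac{S(T_i^\varepsilon)}{|T_i^\varepsilon|} \Big\|_Z \le \kappa(\varepsilon),
\]
where $\kappa(\varepsilon) \to 0$ as $\varepsilon \to 0$ and $\beta = \beta(\varepsilon) \to 0$ appropriately. A standard Cauchy argument (comparing two values $\varepsilon, \varepsilon'$ via the above with both their tilings) then shows that $\{F(U_j)/|U_j|\}_j$ is norm-Cauchy, hence convergent to some $\overline F \in Z$, and that $\overline F = \lim_{\varepsilon \to 0}\sum_i \eta_i(\varepsilon) S(T_i^\varepsilon)/|T_i^\varepsilon|$. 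The delicate point throughout is bookkeeping the three independent small parameters ($\varepsilon$, $\beta$, and the tower error $\varepsilon_1$ hidden in Theorem~\ref{thm:UDT}) and checking that the boundary-term contributions really are $o(|U_j|)$ uniformly; this is where tiling-admissibility is essential and where I expect the real work to lie.

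\emph{Part (C).} This is immediate from (A) and (B): each $S(T_i^\varepsilon)$ lies in $\operatorname{Fix}(T_G)$, hence so does every finite combination $\sum_i \eta_i(\varepsilon) S(T_i^\varepsilon)/|T_i^\varepsilon|$; since $\operatorname{Fix}(T_G)$ is norm-closed (it is the intersection of the kernels of the bounded operators $T_g - \operatorname{id}$, which are continuous), the norm-limit $\overline F = \lim_{\varepsilon \to 0}\sum_i \eta_i(\varepsilon) S(T_i^\varepsilon)/|T_i^\varepsilon|$ also satisfies $T_g \overline F = \overline F$ for all $g \in G$.
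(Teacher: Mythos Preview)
Your plan is essentially the paper's own proof. Part (A) is identical; Part (C) is the same argument phrased slightly differently (the paper pushes $T_g$ through the limit using continuity, you use closedness of $\operatorname{Fix}(T_G)$ --- equivalent). For Part (B) the paper does exactly what you outline: it fixes $\varepsilon$, invokes Theorem~\ref{thm:UDT} to get a tower $(\Upsilon,\Lambda)$, applies Proposition~\ref{prop:almostadd} pointwise in $(y,\lambda)$ (your $D_1$-type term), averages, and compares to $\sum_i \eta_i(\varepsilon) S(T_i^\varepsilon)/|T_i^\varepsilon|$ before running the Cauchy argument.

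The one place where your sketch is vaguer than the paper is the passage from the $(y,\lambda)$-averaged sum $\sum_{c\in C_i^{y,\lambda}} F(T_i^\varepsilon c)$ to the ergodic average $\int_{U_j} T_{g^{-1}}F(T_i^\varepsilon)\,dg$. Property (II) of Definition~\ref{defi:UCD} is a \emph{scalar} statement about cardinalities and does not by itself move the Banach-space values around. The paper's mechanism is a change of variables: since $C_i^{y,\lambda}=U_j\cap \hat C_i^{y}\lambda^{-1}$, one rewrites $\int_\Lambda \sum_{c\in C_i^{y,\lambda}} F(T_i^\varepsilon c)\,d\lambda$ as $\sum_{\hat c\in \hat C_i^y}\int_{U_j\cap \hat c\Lambda^{-1}} T_{u^{-1}}F(T_i^\varepsilon)\,du$, and then splits off the defect over $U_j\setminus \hat c\Lambda^{-1}$. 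This yields the explicit four-term bound $\Delta(j,\varepsilon)\le D_1+D_2+D_3+D_4$ in the paper, where $D_2$ is that defect, $D_3$ compares the $U_j$-average to $S(T_i^\varepsilon)$, and $D_4$ compares $\operatorname{card}(\hat C_i^y)/|\Lambda|$ to $\eta_i(\varepsilon)/|T_i^\varepsilon|$. Controlling $D_2$ and $D_4$ is precisely where the \emph{second} level of the tower (the $\Upsilon$-average and property (II) for $\hat U_j$, not just for $U_j$) is needed; your sketch mentions averaging over $\Upsilon$ but does not isolate why the two-level structure is indispensable. Once you see this, the bookkeeping you anticipate is straightforward.
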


\begin{proof}
For the proof of statement (A), let $Q \in \mathcal{F}(G)$. With the remark following Theorem \ref{thm:MET}, the claim now follows from the relative weak compactness of $C_{F,Q}$, as well as from the fact that $T_{g^{-1}}F(Q)= F(Qg)$ for all $g \in G$.  \\

For the proof for (B), we fix $0 < \varepsilon < 1/10$ and $\beta:= 2^{-N(\varepsilon)}\varepsilon$ and we find $j_0(\varepsilon) \in \NN$ such that for each $j \geq j_0(\varepsilon)$, the set $U_j$ satisfies the invariance condition given by Theorem \ref{thm:UDT}. Further, we set
\begin{eqnarray*}
\Delta(j,\varepsilon) := \left\| \frac{F(U_j)}{|U_j|} - \sum_{i=1}^{N(\varepsilon)} \eta_i(\varepsilon)\, \frac{S(T_i^{\varepsilon})}{|T_i^{\varepsilon}|} \right\|_Z
\end{eqnarray*}
for $\varepsilon > 0$ and $j \in \NN$. In the following, we fix $j \geq j_0(\varepsilon)$.  With $0 < \eta < \eta_0$, where $\eta_0$ is taken from Theorem \ref{thm:UDT} as well, we can find some very $(U_jU_j^{-1}, \eta)$-invariant set $\hat{U}_j$ along with a uniform decomposition tower $(\Upsilon, \Lambda)$ with basis sets $T_i^{\varepsilon}$, $(1 \leq i \leq N(\varepsilon))$ and finite center sets $\hat{C}^{y}_i$ for $\hat{U}_j$, where $y \in \Upsilon$, and $C_i^{y, \lambda}$ for $U_j$, where $\lambda \in \Lambda$, respectively. With no loss of generality, we assume that all the $T_i^{\varepsilon}$ are taken from a subsequence $\{S_{n_k}\}_{k=1}^{\infty}$ of a strong F{\o}lner sequence such that the expressions $b(S_{n_k})/|S_{n_k}|$ converge to zero monotonically as $k \rightarrow \infty$. Additionally, we make sure that $T_i^{\varepsilon} \in \{S_{n_l} \,|\, l \geq i \}$ for all $1 \leq i \leq N$.  \\

We will show that $\lim_{\varepsilon\rightarrow 0} \lim_{j \rightarrow \infty} \Delta(j,\varepsilon) = 0$. So combining the construction of the uniform decomposition tower (Theorem \ref{thm:UDT}, statement (III)) for $(U_j, \hat{U}_j)$ with the triangle inequality, we arrive at
\begin{eqnarray}  \label{eqn:ALLEE}
\Delta(j, \varepsilon) \leq D_1(j, \varepsilon) + D_2(j, \varepsilon) + D_3(j, \varepsilon) + D_4(j, \varepsilon) + D_5(j,\varepsilon)
\end{eqnarray}
with
\begin{eqnarray*}
D_1(j, \varepsilon) := \left\| \frac{F(U_j)}{|U_j|} - \sum_{i=1}^{N(\varepsilon)} |\Upsilon|^{-1}|\Lambda|^{-1} \int_{\Upsilon} \int_{\Lambda} \sum_{c \in C_i^{y,\lambda}} \frac{F(T_i^{\varepsilon}c)}{|U_j|} \, d\lambda\,dy \right\|_Z,
\end{eqnarray*}
\begin{eqnarray*}
D_2(j, \varepsilon) := \sum_{i=1}^{N(\varepsilon)} |\Upsilon|^{-1} |\Lambda|^{-1} \int_{\Upsilon} \int_{\Lambda} \left\| \sum_{c \in \hat{C}_i^y \lambda^{-1} \cap U_j} \frac{F(T_i^{\varepsilon}c)}{|U_j|} - \sum_{c \in C_i^{y,\lambda}} \frac{F(T_i^{\varepsilon}c)}{|U_j|} \right\| \, d\lambda\, dy,
\end{eqnarray*}
\begin{eqnarray*}
D_3(j, \varepsilon) := \left\| \sum_{i=1}^{N(\varepsilon)} |\Upsilon|^{-1}|\Lambda|^{-1} \int_{\Upsilon} \left( \sum_{c \in \hat{C}_i^{y} \cap U_j\lambda} \, \int_{U_j \setminus c\Lambda^{-1}} \frac{T_{\lambda^{-1}}F(T_i^{\varepsilon})}{|U_j|} \, d\lambda \right) \, dy \right\|_Z,
\end{eqnarray*}
\begin{eqnarray*}
D_4(j, \varepsilon) := \left\| \sum_{i=1}^{N(\varepsilon)} \left( |\Upsilon|^{-1} \int_{\Upsilon} \frac{\operatorname{card}(\hat{C}_i^{y})}{|\Lambda|} \, dy \right) \, \left( \int_{U_j} \frac{T_{\lambda^{-1}}F(T_i^{\varepsilon})}{|U_j|} \, d\lambda - S(T_i^{\varepsilon}) \right) \right\|_Z
\end{eqnarray*}
and
\begin{eqnarray*}
D_5(j, \varepsilon) := \left\| \sum_{i=1}^{N(\varepsilon)} \left( |\Upsilon|^{-1} \int_{\Upsilon} \frac{\operatorname{card}(\hat{C}_i^{y})}{|\Lambda|} \, dy \right)\, S(T_i^{\varepsilon}) - \sum_{i=1}^{N(\varepsilon)} \eta_i(\varepsilon) \, \frac{S(T_i^{\varepsilon})}{|T_i^{\varepsilon}|} \right\|_Z.
\end{eqnarray*}
We will now give relevant estimates for these expressions.
Since $U_j$ is $\alpha:= (1-4\varepsilon)$-covered by $\varepsilon$-disjoint translates $\{T_ic\}$, $1 \leq i \leq N$, $c \in C_i^{y, \lambda}$ for each $y \in \Upsilon$ and each $\lambda \in \Lambda$ and by the fact that the boundary term $b$ is tiling admissible for some constant $\tilde{D} \geq 1$, it follows from Proposition \ref{prop:almostadd} that for every $j \geq j_0(\varepsilon)$
\begin{eqnarray*}
D_1(j, \varepsilon) &\leq& (7C + 10\tilde{D}) \, \varepsilon + \frac{b(U_j)}{|U_j|}\\
&\quad& \quad  + (5 \tilde{D} + 1)\, \sum_{i=1}^{N(\varepsilon)} \left( |\Upsilon|^{-1}|\Lambda|^{-1} \int_{\Upsilon} \int_{\Lambda} \frac{\operatorname{card}(C_i^{y, \lambda})}{|U_j|} \, d\lambda\, dy \right) \cdot b(T_i^{\varepsilon}).
\end{eqnarray*}
With Theorem \ref{thm:UDT} and the boundedness of $b$ for the $T_i^{\varepsilon}$, this yields
\begin{eqnarray*}
D_1(j, \varepsilon) &\leq& (7C + 10\tilde{D})\, \varepsilon + \frac{b(U_j)}{|U_j|} \\
& &  \quad + (5\tilde{D} + 1)\, \sum_{i=1}^{N(\varepsilon)} \left( \eta_i(\varepsilon) \, b(T_i^{\varepsilon}) + \tilde{D}\tilde{\gamma}_i|T_i^{\varepsilon}|\, \varepsilon + 4\tilde{D}\, \beta \right)
\end{eqnarray*}
such that with the triangle inequality, as well as with the facts that $\sum_{i} \tilde{\gamma}_i|T_i^{\varepsilon}| \leq 2$ and $\lim_{j\rightarrow \infty} b(U_j)/|U_j| = 0$, one obtains
\begin{eqnarray} \label{eqn:ESTTD1}
\limsup_{j \rightarrow \infty} D_1(j, \varepsilon) &\leq& (7C + 10\tilde{D})\,\varepsilon + \limsup_{j \rightarrow \infty} \frac{b(U_j)}{|U_j|} \nonumber \\
&\quad& \quad +  \, (5\tilde{D} + 1)  \sum_{i=1}^{N(\varepsilon)} \eta_i(\varepsilon) \, b(T_i^{\varepsilon}) + 2\tilde{D}(5\tilde{D} + 1) \, \varepsilon + 8\tilde{D}(5 \tilde{D} + 1) \, \varepsilon \nonumber \\
&\leq& (7C + 10\tilde{D} + 10\tilde{D}(5\tilde{D} + 1)) \, \varepsilon \nonumber \\
&\quad& \quad + \,  (5\tilde{D} + 1) \sum_{i=1}^{N(\varepsilon)} \varepsilon(1-\varepsilon)^{N(\varepsilon) - i} \cdot \frac{b(T_i^{\varepsilon})}{|T_i^{\varepsilon}|} \nonumber \\
&\leq& (7C + 10\tilde{D} + 10\tilde{D}(5\tilde{D} + 1)) \, \varepsilon \nonumber \\
&\quad& \quad + \, (5\tilde{D} + 1) \sum_{i=1}^{N(\varepsilon)} \varepsilon(1-\varepsilon)^{N(\varepsilon) - i} \cdot \frac{b(S_{n_i})}{|S_{n_i}|}.
\end{eqnarray}

We continue with the estimate for $D_2(j, \varepsilon)$. It follows from the property (III) of the definition of the uniform decomposition tower that there is a 
set $\tilde{U}_j \subseteq U_j$ with $|\tilde{U}_j| \geq (1-\beta)|U_j|$ and 
\begin{eqnarray*}
\tilde{U}_j \cap \hat{C}_i^y \lambda^{-1} \subseteq C_i^{y,\lambda} \subseteq U_j \cap \hat{C}_i^y \lambda^{-1}
\end{eqnarray*}
for $1 \leq i \leq N(\varepsilon)$, $y \in \Upsilon$ and $\lambda \in \Lambda$. 
It follows from the construction given in the proof of Theorem~\ref{thm:UDT} that one can choose
\[
\tilde{U}_j := U_j \setminus \partial_{\overline{T}_M\overline{T}_M^{-1}}(U_j)
\]
for some auxiliary quasi tiling $\overline{T}_l$, $1\leq l \leq M$.
Clearly, there is no loss in generality in assuming that the set $U_j$ is 
$(\overline{T}_M\overline{T}_M^{-1}T_N^{\varepsilon}T_N^{\varepsilon\,-1},\beta)$-invariant 
(if not, choose $j$ larger). It follows from this that the set $\tilde{U}_j$ is actually
$(T_N^{\varepsilon}T_N^{\varepsilon\,-1}, 4\beta)$-invariant. Thus, one obtains
\[
\frac{|\partial_{T_i^{\varepsilon}T_i^{\varepsilon\,-1}}(U_j \setminus \tilde{U}_j)|}{|U_j|} < 5\,\beta
\]
for all $1 \leq i \leq N(\varepsilon)$, as well as for large enough $j$.   
Now by the triangle inequality and the boundedness of $F$, we obtain
\begin{eqnarray} \label{eqn:est2soon}
D_2(j, \varepsilon) &\leq& \sum_{i=1}^{N(\varepsilon)} |\Upsilon|^{-1} |\Lambda|^{-1} \int_{\Upsilon} \int_{\Lambda} \sum_{c \in \hat{C}_i^y \lambda^{-1} \cap (U_j \setminus \tilde{U}_j)} \frac{\| F(T_i^{\varepsilon}c) \|}{|U_j|} \, d\lambda \, dy \nonumber \\
&\leq& C\, \sum_{i=1}^{N(\varepsilon)} |\Upsilon|^{-1} |\Lambda|^{-1} \int_{\Upsilon} \int_{\Lambda} \frac{\operatorname{card}(\hat{C}_i^y \cap (U_j \setminus \tilde{U}_j)\lambda)}{|U_j|} \,|T_i^{\varepsilon}|\, d\lambda \, dy.
\end{eqnarray}
For a moment, fix $\lambda \in \Lambda$, as well as $y \in \Upsilon$. 
By $\varepsilon$-disjointness of the translates $T_i^{\varepsilon}c$, $(c\in \hat{C}_i^y)$, we deduce that there are at most
\[
\frac{|(U_j \setminus \tilde{U}_j) \cup \partial_{T_i^{\varepsilon}T_i^{\varepsilon\,-1}}(U_j \setminus \tilde{U}_j)|}{(1-2\varepsilon)\,|T_i^{\varepsilon}|}
\]
many translates $T_i^{\varepsilon}c$ with $c \in (U_j\setminus \tilde{U}_j)\lambda$. Therefore,  
\begin{eqnarray*}
\frac{\operatorname{card}(\hat{C}^y_i \cap (U_j \setminus \tilde{U}_j)\lambda)}
{|U_j|}\,|T_i^{\varepsilon}| &\leq& \frac{|U_j \setminus \tilde{U}_j \cup 
\partial_{T_i^{\varepsilon}T_i^{\varepsilon\, -1}}
(U_j \setminus \tilde{U}_j)|}{|T_i^{\varepsilon}| \, 
|U_j| \, (1-2\varepsilon)}\, |T_i^{\varepsilon}| \\ 
&\stackrel{\varepsilon < 1/4}{\leq}& 2\, (\beta + 5\beta) = 12 \beta  
\end{eqnarray*}
for every $1 \leq i \leq N(\varepsilon)$, large enough $j$, 
every $y \in \Upsilon$ and each $\lambda \in \Lambda$.
For the above estimate we also used that $U_j$ is $(T_i^{\varepsilon}T_i^{\varepsilon\,-1}, \beta)$-invariant 
and that $|\tilde{U}_j| \geq (1-\beta)|U_j|$. With the simple observation 
that $\beta N(\varepsilon) < 2\varepsilon$, we deduce from inequality~\eqref{eqn:est2soon} that
\begin{eqnarray} \label{eqn:ESTTD1.5}
\limsup_{j\to \infty} D_2(j, \varepsilon) \leq C\, \sum_{i=1}^{N(\varepsilon)} 12 \beta   \leq 24C \varepsilon.
\end{eqnarray}

For a good estimate for $D_3(j, \varepsilon)$, the concept of a uniform decomposition tower is crucial. Hence, due to the boundedness of $F$, we have for $j \geq j_0(\varepsilon)$ that

\begin{eqnarray*}
D_3(j, \varepsilon) &\leq& \sum_{i=1}^{N(\varepsilon)} |\Upsilon|^{-1} |\Lambda|^{-1} \int_{\Upsilon} \left( \sum_{c \in \hat{C}_i^{y}} |U_j|^{-1} \int_{U_j \setminus c\Lambda^{-1}} \|T_{u^{-1}}F(T_i^{\varepsilon})\|_Z \, du \right) \, dy \\
&\leq& C\, \sum_{i=1}^{N(\varepsilon)} |\Upsilon|^{-1} |\Lambda|^{-1} \int_{\Upsilon} \left( \sum_{c \in \hat{C}_i^{y}} |U_j|^{-1} \int_{U_j} \one_{\hat{U}_j \setminus u\Lambda} (c) \cdot |T^{\varepsilon}_i| \, du \right) \, dy \\
&\leq& C \, \sum_{i=1}^{N(\varepsilon)} |U_j|^{-1} \int_{U_j} |\Upsilon|^{-1} \left(  \int_{\Upsilon} \frac{\operatorname{card}((\hat{U}_j \setminus u\Lambda) \cap \hat{C}_i^{y})}{|\Lambda|} \, dy \right)   \,du \cdot |T_i^{\varepsilon}| \\
&\leq& 2C \, \sum_{i=1}^{N(\varepsilon)} |U_j|^{-1} \int_{U_j} |\Upsilon|^{-1} \left(  \int_{\Upsilon} \frac{\operatorname{card}((\hat{U}_j \setminus u\Lambda) \cap \hat{C}_i^{y})}{|\hat{U}_j|} \, dy \right)   \,du \cdot |T_i^{\varepsilon}| ,
\end{eqnarray*}
where the last inequality is due to Theorem \ref{thm:UDT}, statement (I) and $\beta << 1/2$. 

By the properties (I) and (II) of Theorem \ref{thm:UDT}, it follows with the boundedness of $F$ and with with $\beta < 2^{-N(\varepsilon)}\varepsilon$, as well as with $\sum_{i} \tilde{\gamma}_i |T_i^{\varepsilon}| \leq 2$ that
\begin{eqnarray*}
D_3(j, \varepsilon, \omega) &\leq& 2 C \, \sum_{i=1}^{N(\varepsilon)} \left( \eta_i(\varepsilon) \cdot \frac{\beta}{|T_i^{\varepsilon}|} + 4\frac{\beta}{|T_i^{\varepsilon}|} + 2\tilde{\gamma}_i \, \varepsilon \right) \, |T_i^{\varepsilon}| \\
&\leq& 2\, C \, (\beta + 8\varepsilon + 4\varepsilon) \\
&\leq& 26\, C \, \varepsilon.
\end{eqnarray*}
Note that here we used that $|\hat{U}_j \setminus u\Lambda| = |\hat{U}_j| - |\Lambda| \leq \beta\, |\hat{U}_j|$ for all $u \in U_j$, cf.\@ the statement (I) of Theorem \ref{thm:UDT}.\\
Consequently, 
\begin{eqnarray} \label{eqn:ESTTD2}
\limsup_{j \rightarrow \infty} D_3(j, \varepsilon) \leq 26\varepsilon \, C.
\end{eqnarray}

For $D_4(j,\varepsilon)$, it is a direct consequence of claim (II) of Theorem \ref{thm:UDT} with $\hat{S}=\hat{T}$ and with $|\Lambda| \geq (1-\beta)|\hat{U}_j|$ $(\beta << 1/2)$ that 
\begin{eqnarray*}
D_4(j, \varepsilon) &\leq& \sum_{i=1}^{N(\varepsilon)} (1-\beta)^{-1} \left( \frac{\eta_i(\varepsilon)}{|T_i^{\varepsilon}|} + \frac{4\beta}{|T_i^{\varepsilon}|} + 2\,\tilde{\gamma}_i \,\varepsilon \right) \, \left\| \int_{U_j} \frac{T_{\lambda^{-1}}F(T_i^{\varepsilon})}{|U_j|} \, d\lambda - S(T_i^{\varepsilon})  \right\|_Z \\
&\leq& 2\,\sum_{i=1}^{N(\varepsilon)} \Bigg[ \frac{\eta_i(\varepsilon)}{|T_i^{\varepsilon}|} \,\left\| |U_j|^{-1} \int_{U_j} T_{\lambda^{-1}}F(T_i^{\varepsilon}) \, d\lambda - S(T_i^{\varepsilon})  \right\|_Z + \\
& & \quad \left( 4\frac{\beta}{|T_i^{\varepsilon}|} + 2\, \tilde{\gamma}_i \, \varepsilon \right) \,  C \cdot 2 \cdot |T_i^{\varepsilon}| \Bigg] \\
&\leq& 2 \, \sum_{i=1}^{N(\varepsilon)} \frac{\eta_i(\varepsilon)}{|T_i^{\varepsilon}|} \, \left\| |U_j|^{-1} \int_{U_j} T_{\lambda^{-1}}F(T_i^{\varepsilon}) \, d\lambda - S(T_i^{\varepsilon})  \right\|_Z  + 48\varepsilon \, C
\end{eqnarray*}
for every $j \geq j_0(\varepsilon)$.
It now follows from the claim (A) that
\begin{eqnarray} \label{eqn:ESTTD3}
\limsup_{j \rightarrow \infty} D_4(\varepsilon, j) \leq 48 \varepsilon \, C.
\end{eqnarray}

Finally, again by using Theorem \ref{thm:UDT}, property (II), with $\hat{S}=\hat{U}_j$, we also get an estimate for $D_5(j, \varepsilon)$. So by the uniform distribution of the $\hat{C}_i^{y}$ and since $|\Lambda| \geq (1-\beta)|\hat{U}_j|$ by the statement (I) of Theorem \ref{thm:UDT}, we obtain
\begin{eqnarray*}
D_5(j, \varepsilon)  &\leq& \sum_{i=1}^{N(\varepsilon)} \left| |\Upsilon|^{-1} \int_{\Upsilon} \frac{\operatorname{card}(\hat{C}_i^{y})}{|\Lambda|} \, dy - \frac{\eta_i(\varepsilon)}{|T_i^{\varepsilon}|}\right| \,  \| S(T_i^{\varepsilon}) \|_Z  \\
&\leq& C\, [(1-\beta)^{-1}-1] \cdot  |\Upsilon|^{-1}\int_{\Upsilon} \, \sum_{i=1}^{N(\varepsilon)}  \frac{\operatorname{card}(\hat{C}_i^{y}) \,|T_i^{\varepsilon}|}{|\hat{U}_j|} \, dy \,+  \\
&\quad& C \, \sum_{i=1}^{N(\varepsilon)} \left( \frac{4\beta}{|T_i^{\varepsilon}|} + 2\tilde{\gamma}_{i} \varepsilon \right) \, |T_i^{\varepsilon}|
\end{eqnarray*}
for $j \geq j_0(\varepsilon)$.
Since the translates $\{T_ic\}$, $c \in \hat{C}_i^{y}$ are $\varepsilon$-disjoint and as $\sum_{i=1}^{N(\varepsilon)} \tilde{\gamma}_i \, |T_i^{\varepsilon}| \leq 2$, we arrive at

\begin{eqnarray*}
D_5(j, \varepsilon) &\leq& C \, \frac{(1-\beta)^{-1} - 1}{1- \varepsilon} + C \, \sum_{i=1}^{N(\varepsilon)} \left( 4 \beta + 2\tilde{\gamma}_i|T_i^{\varepsilon}| \,\varepsilon \right) \nonumber \\
&\stackrel{\beta < 2^{-N(\varepsilon)}\varepsilon}{\leq}& 16C \, \varepsilon
\end{eqnarray*}
for $j \geq j_0(\varepsilon)$ and thus, 
\begin{eqnarray} \label{eqn:ESTTD4}
\limsup_{j \rightarrow \infty} D_5(j, \varepsilon) \leq 16C\,\varepsilon.
\end{eqnarray}

To conclude the statement, we derive from the Inequalities (\ref{eqn:ESTTD1}), (\ref{eqn:ESTTD1.5}) (\ref{eqn:ESTTD2}), (\ref{eqn:ESTTD3}), as well as (\ref{eqn:ESTTD4}) that indeed, 
\begin{eqnarray*}
\lim_{\varepsilon\rightarrow 0} \limsup_{j \rightarrow \infty} \Delta(j,\varepsilon) = 0. 
\end{eqnarray*} 
In particular, this means that
\begin{eqnarray*}
\limsup_{k,l \rightarrow \infty} \left\| \frac{F(U_l)}{|U_l|} - \frac{F(U_k)}{|U_k|} \right\|_Z &\leq& \lim_{\varepsilon \rightarrow 0} \limsup_{k \rightarrow \infty} \Delta(k,\varepsilon)  + \lim_{\varepsilon \rightarrow 0} \limsup_{l \rightarrow \infty}  \Delta(l, \varepsilon) \\
&=& 0 
\end{eqnarray*}
and $F(U_j)/|U_j|$ is a Cauchy sequence in $Z$ and hence converges in $Z$. The representation as the second limit is now an easy consequence of the triangle inequality. \\

For the validity of claim (C), take an arbitrary $g \in G$. Note that for all $0 < \varepsilon < 1/10$ and every $1 \leq i\leq N$, we have $T_gS(T_i^{\varepsilon}) = S(T_i^{\varepsilon})$ by the general mean ergodic Theorem \ref{thm:MET}.  
By the boundedness (continuity) of the operator $T_g$ and the convergence result in claim (B), the following computation finishes our proof:
\begin{eqnarray*}
T_g \overline{F} &=& T_g \left( \lim_{\varepsilon \rightarrow 0} \sum_{i=1}^{N(\varepsilon)} \eta_i(\varepsilon) \, \frac{S(T_i^{\varepsilon})}{|T_i^{\varepsilon}|} \right) = \lim_{\varepsilon \rightarrow 0} \left(\sum_{i=1}^{N(\varepsilon)} \eta_i(\varepsilon) \, \frac{T_gS(T_i^{\varepsilon})}{|T_i^{\varepsilon}|} \right) \\ 
&=& \lim_{\varepsilon \rightarrow 0} \left(\sum_{i=1}^{N(\varepsilon)} \eta_i(\varepsilon) \, \frac{S(T_i^{\varepsilon})}{|T_i^{\varepsilon}|} \right) = \overline{F}. 
\end{eqnarray*}

\end{proof}

There may occur situations where we do not have a weakly measurable operator action satisfying the invariance condition at hand. 
However, assuming the existence of certain abstract limits, we are still able to derive the mean ergodic theorem.  

\begin{Corollary} \label{cor:METSF}
Let $G$ be a unimodular group and assume that $\{U_j\}$ is a strong F{\o}lner sequence in $G$. Let $(Z, \|\cdot\|_Z)$ be a Banach space. Further, denote by $b$ some tiling-admissible boundary term defined on $\mathcal{F}(G)$ and we consider a bounded, $b$-almost additive mapping $\mathcal{F}(G) \rightarrow (Z, \|\cdot\|_Z)$. \\ 
Assume that for each $Q \in \mathcal{F}(G)$ and all elements $h \in Z^{*}$, the mappings 
\[
\psi_{Q,h}: G \rightarrow \CC: g \mapsto \langle F(Qg), h \rangle_{Z,Z^{*}} 
\]
are measurable. \\
Then, if for a positive sequence $\varepsilon_k \rightarrow 0$ and $N(\varepsilon_k):= \lceil \log(\varepsilon_k)/\log(1- \varepsilon_k) \rceil$, we have 
$\varepsilon_k$-quasi tilings $\{T_i^{\varepsilon_k}\}_{i=1}^{N(\varepsilon_k)}$ of the group as in Definition \ref{defi:STP} with $0 < \beta < 2^{-N(\varepsilon_k)}\varepsilon_k$, and if for each $k \in \mathbb{N}$ and every $1 \leq i \leq N(\varepsilon_k)$, the expression
\[
S(T_i^{\varepsilon_k}) := \lim_{j \rightarrow \infty} |U_j|^{-1} \int_{U_j} F(T_i^{\varepsilon_k}g)\, dg
\]
exists in $Z$, then the following limits exist in $Z$ and are equal:
\begin{eqnarray*}
\overline{F} := \lim_{j \rightarrow \infty} \frac{F(U_j)}{|U_j|} = \lim_{k\rightarrow\infty} \sum_{i=1}^{N(\varepsilon_k)} \eta_i(\varepsilon_k) \, \frac{S(T_i^{\varepsilon_k})}{|T_i^{\varepsilon_k}|},
\end{eqnarray*}
where we have $\eta_i(\varepsilon_k):= \varepsilon(1-\varepsilon_k)^{N(\varepsilon_k) - i}$ for $1 \leq i \leq N(\varepsilon_k)$.
\end{Corollary}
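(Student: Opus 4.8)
The plan is to rerun the proof of Theorem~\ref{thm:METSF}(B) essentially verbatim, after observing that in that argument the weakly measurable operator action enters in exactly two places: to produce the limits $S(T_i^{\varepsilon})$ via the classical mean ergodic Theorem~\ref{thm:MET} (statement (A) of the Theorem), and to deduce the $T_g$-invariance of $\overline{F}$ (statement (C)). Neither is needed here: the existence of $S(T_i^{\varepsilon_k})$ is now a hypothesis, and invariance is not asserted in the Corollary. What survives is purely the combinatorial convergence scheme built on the uniform decomposition tower of Theorem~\ref{thm:UDT} together with the error estimate of Proposition~\ref{prop:almostadd}, and this transfers once every occurrence of $T_{g^{-1}}F(Q)$ in the original proof is replaced by $F(Qg)$.

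First I would record the elementary reductions. By boundedness of $F$ and measurability of the functions $\psi_{Q,h}$, the map $g \mapsto F(Qg)$ is a bounded, weakly measurable $Z$-valued map for every $Q \in \mathcal{F}(G)$, so each average $|U_j|^{-1}\int_{U_j} F(Qg)\,dg$ is well defined as a Pettis integral (an element of $Z^{**}$) of norm at most $C\,|Q|$; for $Q = T_i^{\varepsilon_k}$ the hypothesis guarantees it converges in $Z$-norm to $S(T_i^{\varepsilon_k}) \in Z$, whence $\|S(T_i^{\varepsilon_k})\|_Z \le C\,|T_i^{\varepsilon_k}|$. Fixing $\varepsilon = \varepsilon_k$, setting $\beta := 2^{-N(\varepsilon)}\varepsilon$, choosing $j_0(\varepsilon) \in \NN$ and, for $j \ge j_0(\varepsilon)$, a uniform decomposition tower $(\Upsilon,\Lambda)$ for $(U_j,\hat{U}_j)$ with basis sets $T_i^{\varepsilon}$ as supplied by Theorem~\ref{thm:UDT}, and taking the $T_i^{\varepsilon}$ from a subsequence of a strong F{\o}lner sequence along which $b(S_n)/|S_n|$ decreases to zero, I set up exactly the quantities of the Theorem's proof.

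Then I would introduce $\Delta(j,\varepsilon) := \big\| F(U_j)/|U_j| - \sum_{i} \eta_i(\varepsilon)\, S(T_i^{\varepsilon})/|T_i^{\varepsilon}| \big\|_Z$ and split it as $\Delta(j,\varepsilon) \le D_1(j,\varepsilon) + D_2(j,\varepsilon) + D_3(j,\varepsilon) + D_4(j,\varepsilon)$ precisely as in~(\ref{eqn:ALLEE}), with the sole modification that $T_{\lambda^{-1}}F(T_i^{\varepsilon})$ is everywhere rewritten as $F(T_i^{\varepsilon}\lambda)$. The bound on $\limsup_{j} D_1$ follows word for word from Proposition~\ref{prop:almostadd} (using tiling-admissibility of $b$) and the tower properties; the bounds $\limsup_{j} D_2 \le 26C\varepsilon$ and $\limsup_{j} D_4 \le 16C\varepsilon$ use only the boundedness $\|F(T_i^{\varepsilon}u)\|_Z \le C|T_i^{\varepsilon}|$ (in place of $\|T_{u^{-1}}F(T_i^{\varepsilon})\|_Z \le C|T_i^{\varepsilon}|$) together with statements (I)--(II) of Theorem~\ref{thm:UDT}. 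For $D_3$ one gets $D_3(j,\varepsilon) \le 2\sum_{i} \frac{\eta_i(\varepsilon)}{|T_i^{\varepsilon}|}\big\| |U_j|^{-1}\int_{U_j} F(T_i^{\varepsilon}\lambda)\,d\lambda - S(T_i^{\varepsilon})\big\|_Z + 48C\varepsilon$, and here the first term tends to $0$ as $j \to \infty$ \emph{directly} by the hypothesis that $S(T_i^{\varepsilon})$ is that limit — this is exactly the spot where claim (A) of the Theorem is now an assumption. Summing, $\lim_{k\to\infty}\limsup_{j\to\infty}\Delta(j,\varepsilon_k) = 0$, the $D_1$-tail $\sum_{i}\varepsilon(1-\varepsilon)^{N(\varepsilon)-i} b(S_{n_i})/|S_{n_i}|$ vanishing as $\varepsilon \to 0$ because its weights sum to at most $1$ and $b(S_{n_i})/|S_{n_i}| \to 0$. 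As in the Theorem this forces $(F(U_j)/|U_j|)_j$ to be Cauchy in $Z$, and the triangle inequality identifies its limit $\overline{F}$ with $\lim_{k}\sum_{i}\eta_i(\varepsilon_k)\,S(T_i^{\varepsilon_k})/|T_i^{\varepsilon_k}|$.

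I expect the only genuinely delicate point to be bookkeeping: verifying that none of the four estimates in the proof of Theorem~\ref{thm:METSF} secretly uses the semigroup or operator structure beyond the inequality $\|F(Qg)\|_Z \le C|Q|$ and the existence of the relevant limits — equivalently, that ``replace $T_{\lambda^{-1}}F(T_i^{\varepsilon})$ by $F(T_i^{\varepsilon}\lambda)$'' is a faithful substitution at every step, including the measurability needed to legitimize the Fubini-type interchanges in the $D_2$- and $D_3$-estimates. Once that is checked, the Corollary is a transcription of the Theorem's proof.
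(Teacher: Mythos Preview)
Your proposal is correct and is exactly the paper's approach: the paper's entire proof of the Corollary is the single line ``See the proof of the claim (B) of Theorem~\ref{thm:METSF}'', and you have spelled out precisely why that reference suffices --- namely, that in the proof of (B) the operator action is used only through (A) (now an assumption) and otherwise every $T_{\lambda^{-1}}F(T_i^{\varepsilon})$ can be replaced by $F(T_i^{\varepsilon}\lambda)$ with the boundedness $\|F(Qg)\|_Z \le C|Q|$ and the measurability hypothesis taking over. Your careful bookkeeping of the four $D_k$ terms and the residual $b(S_{n_i})/|S_{n_i}|$ sum is faithful to the original argument.
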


\begin{proof}
See the proof of the claim (B) of Theorem \ref{thm:METSF}. 
\end{proof}

\section{Pointwise ergodic theorems} \label{sec:PWET}

For later considerations for the integrated density of states in random models in Section~\ref{sec:IDScont}, we now extend the celebrated {\sc Lindenstrauss} pointwise ergodic theorem (cf.\@ \cite{Lindenstrauss-01}) to abstract ergodic averages given by Definition~\ref{defi:AEA} on Bochner spaces. More precisely, as Banach space under consideration, we choose $Z:=L^p(\Omega, Y)$, where $1 \leq p < \infty$, $\Omega$ is a $\sigma$-finite measure space and $Y$ is an arbitrary reflexive Banach space. As before, an amenable group acts weakly measurably on $Z$ via a family of linear, uniformly bounded operators $\{T_g\}_{g \in G}$. This action is linked with a measure preserving action of the group on the measure space, see Inequality~(\ref{eqn:tmp}). Following the classic proofs of pointwise ergodic theorems (e.g.\@~\cite{Emerson-74,Krengel-85,Lindenstrauss-01}), we use a so-called $L^p$-maximal inequality (cf.\@ Theorem~\ref{thm:LPmax}) to show the almost everywhere convergence in Theorem~\ref{thm:lindenstr}. We will use the latter result to verify the almost sure convergence of the integrated density of states for certain random operators, cf.\@ Theorem~\ref{thm:pointwise}.
     
\begin{Definition}
Let $Y$ be a Banach space and assume that $(\Omega, \mathcal{F}, \mu)$ is a $\sigma$-finite measure space. For $1 \leq p < \infty$, we denote by $L^{p}(\Omega, Y)$ the (Bochner) space of all equivalence classes $f:\Omega \rightarrow Y$ such that each representant $f$ is strongly measurable with respect to $\mathcal{F}$ and
\begin{eqnarray*}
\|f\|_{L^p(\Omega, Y)}:= \left( \int_{\Omega} \|f(\omega)\|^p_Y \, d\mu(\omega) \right)^{1/p} < \infty,
\end{eqnarray*}
i.e.\@ $\|f(\cdot)\|_Y \in L^p(\Omega, \RR) :=  L^p(\Omega, \mathcal{F}, \mu)$.
For $p = \infty$,  we set $L^{\infty}(\Omega, Y)$ as the space of strongly measurable equivalence relations $f$ such that
\begin{eqnarray*}
\|f\|_{L^{\infty}(\Omega, Y)} := \operatorname{ess}\, \sup_{\omega \in \Omega} \|f(\omega)\|_Y < \infty,
\end{eqnarray*}
i.e.\@ $\|f(\cdot)\|_Y \in L^{\infty}(\Omega, \RR):= L^{\infty}(\Omega, \mathcal{F}, \mu)$.
\end{Definition}



\begin{Lemma} \label{lemma:refl}
Let $Y$ be a reflexive Banach space. Then for all $1 < p < \infty$, the space $L^p(\Omega, Y)$ is reflexive. In particular, we have
\begin{eqnarray*}
L^p(\Omega, Y)^{*} \cong L^q(\Omega, Y^{*}),
\end{eqnarray*}
where $1/p + 1/q = 1$ and $Y^{*}$ is the dual space of $Y$.
\end{Lemma}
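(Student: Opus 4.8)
The plan is to reduce the reflexivity of $L^p(\Omega, Y)$ to the two classical facts that (a) a Banach space is reflexive if and only if its dual is reflexive, and (b) for a reflexive $Y$ and $1 < p < \infty$ one has the isometric isomorphism $L^p(\Omega, Y)^{*} \cong L^q(\Omega, Y^{*})$ with $1/p + 1/q = 1$, which is the standard Bochner-space duality theorem (it is exactly here that reflexivity of $Y$ is used, since it guarantees that $Y^{*}$ has the Radon--Nikod\'ym property and that every bounded linear functional on $L^p(\Omega,Y)$ is represented by an element of $L^q(\Omega,Y^{*})$). So first I would state the duality isomorphism as the main structural input, citing a standard reference (e.g.\ Diestel--Uhl, \emph{Vector Measures}), and observe that since $Y$ is reflexive, $Y^{*}$ is reflexive as well, and $q$ again lies strictly between $1$ and $\infty$.

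Next I would iterate the duality once more: applying the same theorem to the reflexive space $Y^{*}$ and the exponent $q$ gives
\begin{eqnarray*}
L^p(\Omega, Y)^{**} \cong L^q(\Omega, Y^{*})^{*} \cong L^p(\Omega, Y^{**}) \cong L^p(\Omega, Y),
\end{eqnarray*}
where the last step uses $Y^{**} \cong Y$ (reflexivity of $Y$ again). The only point requiring care is that this chain of isomorphisms composes to the canonical embedding $J_{L^p}\colon L^p(\Omega,Y) \to L^p(\Omega,Y)^{**}$; I would check this by a direct pairing computation on simple functions $f = \sum_k \one_{A_k} y_k$, for which $\langle J_{L^p} f, \phi\rangle = \langle \phi, f\rangle = \sum_k \int_{A_k} \langle \phi(\omega), y_k\rangle\, d\mu(\omega)$ unwinds through both identifications to the same scalar, and then extend to all of $L^p$ by density of simple functions and continuity. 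This identifies the canonical embedding as surjective, i.e.\ $L^p(\Omega,Y)$ is reflexive, and yields the displayed formula $L^p(\Omega,Y)^{*} \cong L^q(\Omega,Y^{*})$ as the intermediate isomorphism.

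The main obstacle is really the duality theorem $L^p(\Omega,Y)^{*}\cong L^q(\Omega,Y^{*})$ itself: surjectivity of the natural map $L^q(\Omega,Y^{*}) \to L^p(\Omega,Y)^{*}$ fails for general $Y$ and relies on $Y^{*}$ having the Radon--Nikod\'ym property, which holds here because $Y$ (hence $Y^{*}$, being reflexive) is reflexive. Since this is a well-documented classical result, I would simply invoke it rather than reprove it; the remaining work — verifying that the double-dual chain realizes the canonical embedding and handling the $\sigma$-finiteness of $\Omega$ in the representation theorem — is routine. I would end by remarking that the case $p = 2$, $Y$ a Hilbert space, is the familiar statement that $L^2(\Omega,Y)$ is a Hilbert space, which is consistent with the general claim.
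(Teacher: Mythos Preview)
Your proposal is correct: the argument via the Bochner-space duality theorem (requiring the Radon--Nikod\'ym property of $Y^{*}$, which follows from reflexivity) and then iterating to identify the bidual with the canonical embedding is the standard route, and your care about checking that the chain of isomorphisms realizes the canonical embedding is appropriate.

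The paper, however, gives no argument at all: it simply cites Corollary~3.4 of Gretsky--Uhl, \emph{Bounded linear operators on Banach function spaces of vector valued functions}, Trans.\ Amer.\ Math.\ Soc.\ 167 (1972). So your proposal is strictly more detailed than the paper's own proof; there is nothing to compare beyond noting that you have supplied a self-contained sketch (invoking Diestel--Uhl for the duality step) where the paper defers entirely to the literature.
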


\begin{proof}
See \cite{GretskyU-72}, Corollary 3.4.
\end{proof}

We are now interested in the special cases where $Z:= L^p(\Omega, Y)$ for some $1 \leq p < \infty$ and $Y$ is a reflexive Banach space. Further, let the group $G$ act weakly measurably on $(\Omega, \mathcal{F}, \mu)$ by measure preserving transformations, where we write $g\omega := g \cdot \omega$ for $g\in G$ and $\omega \in \Omega$. For each $g \in G$, we are given a mapping $T_g: L^p(\Omega, Y) \rightarrow L^p(\Omega, Y)$ such that the collection $\{T_g\}$ acts weakly measurably on $Z$ as a family of uniformly bounded, linear operators (cf.\@ Definition \ref{defi:WMA}). Additionally, we assume that there is some measurable group homomorphism $\varphi: G \rightarrow G$ along with some constant $\kappa > 0$ such that for every $g \in G$ and each $f \in Z$
\[
\|T_g f (\omega) \|_Y \leq \kappa \, \|f(\varphi(g)^{-1}\omega)\|_Y
\]  
for $\mu$-almost every $\omega \in \Omega$. Since the action of $G$ on $\Omega$ preserves the measure $\mu$, it follows that $\sup_{g \in G}\|T_g\|_{L^p(\Omega,Y)} \leq \kappa$ and a short calculation shows 
\begin{eqnarray} \label{eqn:tmp}
\kappa^{-1}\, \| f(\varphi(g)^{-1}\omega) \|_Y \leq \|T_g f (\omega) \|_Y \leq \kappa \, \|f(\varphi(g)^{-1}\omega)\|_Y,
\end{eqnarray}
which implies that $\kappa \geq 1$. Note that this setting includes the 'standard situation', where one has $(T_gf)(\omega) = f(g^{-1}\omega)$ for $f \in L^p(\Omega, Y)$, $g \in G$ and $\omega \in \Omega$. In the following, we denote by $(U_n)$ a tempered, strong F{\o}lner sequence in $G$, cf.\@ Definitions \ref{defi:FS} and \ref{defi:growth}. 

Recall that in Definition \ref{defi:AEA}, we defined the $j$-th $(j \in \NN)$ abstract ergodic average with respect to $(U_j)$ as
\[
A_jf := |U_n|^{-1} \int_{U_n} T_{g^{-1}}f \, dm_L(g)
\]
for $f \in Z$.
Analogously, for $h \in L^p(\Omega, \RR)$, denote by
\[
A_j^{\varphi}h(\omega) := |U_j|^{-1} \int_{U_j} h(\varphi(g)\omega) \, dm_L(g), \quad j \in \NN,\quad \omega \in \Omega 
\]
the $j$-th {\em abstract ergodic average} of $h$ in $L^p(\Omega,\RR)$ with respect to $(U_j)$ and $\varphi$. With Lemma~\ref{lemma:refl}, we can deduce from Theorem \ref{thm:MET} that for $1 < p < \infty$ and for each $f \in L^p(\Omega, Y)$ the $A_jf$ converge to some $f^{*}$ in $L^p(\Omega, Y)$ as $j \rightarrow \infty$, where $T_g f^{*} = f^{*}$ in $L^p(\Omega, Y)$ for all $g \in G$. Equivalently, we have $A_j \rightarrow P$ strongly in $L^p(\Omega, Y)$ as $j \rightarrow \infty$, where $P$ is a bounded projection on $\operatorname{ran}(P) = \operatorname{Fix}(T_G):= \{h \in L^p(\Omega, Y) \,|\, T_gh = h \,\mbox{ all } g \in G\}$. The following lemma shows that although the mean ergodic theorem does in general not hold for the case $p=1$, we can extend the projection $P$ from the case $p=2$ to the space $L^1(\Omega, Y)$.  

\begin{Lemma} \label{lemma:extend}
Assume the situation of Theorem \ref{thm:MET} with $Z=L^2(\Omega, Y)$ and let $P$ be the corresponding mean ergodic projection. Then there is a bounded projection $\tilde{P}$ on $L^1(\Omega, Y)$ which coincides with $P$ on $L^2(\Omega, Y)$ such that
\begin{enumerate}[(i)]
\item $\operatorname{ran}(\tilde{P}) \subseteq \operatorname{Fix}(T_G)$, 
\item the space $L^{*}:= L_0^{*} \cap L^1(\Omega, Y)$ is $L^1(\Omega, Y)$-dense in $\operatorname{ker}(\tilde{P})$, \\
where $L_0^{*} := \overline{\operatorname{lin}}^{\| \cdot \|_{L^2(\Omega, Y)}}\{ h - T_gh \,|\, h \in L^2(\Omega, Y), \, g \in G \}$.
\end{enumerate}
\end{Lemma}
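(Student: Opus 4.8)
The plan is to construct $\tilde P$ on the dense subspace $L^1(\Omega,Y)\cap L^2(\Omega,Y)$ as the restriction of $P$, verify it is bounded for the $L^1$-norm, and then extend by continuity. First I would recall that, by Theorem~\ref{thm:MET} applied with $Z=L^2(\Omega,Y)$, we have $A_jf\to Pf$ in $L^2(\Omega,Y)$ for every $f$, with $\|A_j\|\le\kappa$ uniformly; moreover by passing to a subsequence we may assume $A_jf\to Pf$ $\mu$-almost everywhere along that subsequence (for $f$ in a countable dense set, then a diagonal argument, or simply use that $L^2$-convergence yields an a.e.-convergent subsequence). The key point is the $L^1$-bound: for $f\in L^1(\Omega,Y)\cap L^2(\Omega,Y)$, the pointwise domination~(\ref{eqn:tmp}) gives $\|A_jf(\omega)\|_Y\le \kappa\,|U_j|^{-1}\int_{U_j}\|f(\varphi(g)^{-1}\omega)\|_Y\,dg = \kappa\,A_j^{\varphi}\|f(\cdot)\|_Y(\omega)$, and since $\varphi$ pushes the measure-preserving action forward to a measure-preserving action, $\|A_jf\|_{L^1(\Omega,Y)}\le\kappa\,\|\,\|f(\cdot)\|_Y\,\|_{L^1(\Omega,\RR)}=\kappa\,\|f\|_{L^1(\Omega,Y)}$. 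Letting $j\to\infty$ along the a.e.-convergent subsequence and applying Fatou's lemma yields $\|Pf\|_{L^1(\Omega,Y)}\le\kappa\,\|f\|_{L^1(\Omega,Y)}$. Hence $P|_{L^1\cap L^2}$ extends uniquely to a bounded operator $\tilde P$ on all of $L^1(\Omega,Y)$, and it is a projection since $P^2=P$ and $P$ is the identity on its range (which lies in $L^1\cap L^2$ when applied to elements of $L^1\cap L^2$, the identity $\tilde P^2=\tilde P$ then passing to the closure).

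Next I would establish (i). For $f\in L^1(\Omega,Y)\cap L^2(\Omega,Y)$ we have $Pf\in\operatorname{Fix}(T_G)$ already in the $L^2$-sense, i.e. $T_gPf=Pf$ $\mu$-a.e. for each $g$; since each $T_g$ is also bounded on $L^1(\Omega,Y)$ (with norm $\le\kappa$, again by~(\ref{eqn:tmp}) and measure preservation) and the relation $T_g\tilde P=\tilde P$ holds on the dense subspace $L^1\cap L^2$, it persists on all of $L^1(\Omega,Y)$ by continuity of $T_g$ and $\tilde P$. Thus $\operatorname{ran}(\tilde P)\subseteq\operatorname{Fix}(T_G)$.

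For (ii), recall from Theorem~\ref{thm:MET} that on $L^2(\Omega,Y)$ one has the direct sum $L^2=\operatorname{Fix}(T_G)\oplus L_0^{*}$ with $L_0^{*}=\ker(P)$. Given $h\in\ker(\tilde P)$, I would approximate $h$ in $L^1$-norm by a sequence $h_n\in L^1(\Omega,Y)\cap L^2(\Omega,Y)$; then $h_n-\tilde Ph_n=h_n-Ph_n\in L_0^{*}\cap L^1(\Omega,Y)=L^{*}$, and $h_n-\tilde Ph_n\to h-\tilde Ph=h$ in $L^1(\Omega,Y)$ by continuity of $\tilde P$. This shows $L^{*}$ is $L^1$-dense in $\ker(\tilde P)$.

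The main obstacle I anticipate is the first step — obtaining the pointwise domination of $A_jf$ by an abstract ergodic average of the scalar function $\|f(\cdot)\|_Y$ cleanly enough to feed into Fatou, and making sure the a.e.-convergent subsequence can be chosen uniformly in $f$ (or that one only needs it for a fixed $f$ at a time, which suffices here). Everything after the $L^1$-bound $\|P\|_{L^1\to L^1}\le\kappa$ is a routine density-and-continuity argument; the density of $L^1(\Omega,Y)\cap L^2(\Omega,Y)$ in $L^1(\Omega,Y)$ (valid because $\mu$ is $\sigma$-finite and simple functions are dense) should be invoked explicitly.
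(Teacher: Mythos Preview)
Your argument is correct and is precisely the standard technique the paper invokes (it gives no details, merely citing Garsia's proof of Theorem~2.1.1 and the author's thesis): bound $P$ in $L^1$ via $\|A_jf\|_{L^1}\le\kappa\|f\|_{L^1}$ and Fatou along an a.e.-convergent subsequence, extend by density, then obtain (i) and (ii) by continuity. One cosmetic slip: since $A_jf$ integrates $T_{g^{-1}}$, applying~(\ref{eqn:tmp}) yields $\|f(\varphi(g)\omega)\|_Y$ rather than $\|f(\varphi(g)^{-1}\omega)\|_Y$; this is harmless because the action is measure preserving either way, and you correctly note at the end that the subsequence need only be chosen for one $f$ at a time.
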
  

\begin{proof}
The proof follows easily from standard techniques, cf.\@ \cite{Garsia-70}, proof of Theorem 2.1.1. For a detailed description, see also \cite{Pogorzelski-10}, proof of Theorem 5.3. All the arguments carry over to Bochner spaces. 
\end{proof}

For the proof of pointwise ergodic theorems, it is convenient to work with the so-called {\em maximal operator} with respect to the $A_j^{\phi}$.

\begin{Definition} \label{defi:maxOP}
Let $1 \leq p < \infty$ and assume that $\phi:G \rightarrow G$ be a measurable group homomorphism. The maximal operator $M^{\phi}$ on $L^p(\Omega, \RR)$ is then defined as
\begin{eqnarray*}
M^{\phi}: L^p(\Omega, \RR) \rightarrow L^0(\Omega, \RR): M^{\phi}h(\omega) := \sup_{j \in \NN} |{A}_j^{\phi}h|(\omega),
\end{eqnarray*}
where $L^{0}(\Omega, \RR)$ denotes the space of all measurable functions from $\Omega$ to $\overline{\RR}:= \RR \cup \{-\infty; +\infty\}$.
\end{Definition}

We will prove the Lindenstrauss ergodic theorem by using a so called {\em maximal ergodic theorem}, i.e.\@ we show that the abstract ergodic averages $A_j f$ on $L^p(\Omega, Y)$ satisfy an $L^p$-{\em maximal inequality}.

\begin{Definition}
We say that the abstract ergodic averages $\{A_n\}$ on $L^p(\Omega, Y)$, $1 \leq p < \infty$ satisfy an $L^p$-maximal inequality if there is a constant $K>0$ such that for all $f \in L^p(\Omega, Y)$ and every $\lambda > 0$, the following holds:
\begin{eqnarray*}
\mu\left(\omega \,\Big|\, \sup_{n \in \NN} \|A_n f\|_Y(\omega) > \lambda\right) \leq \frac{K}{\lambda^p} \, \|f\|^p_{L^p(\Omega, Y)}.
\end{eqnarray*}
\end{Definition}

Now using the celebrated result from {\sc Lindenstrauss}, cf.\@ \cite{Lindenstrauss-01}, we can show the following dominated ergodic theorem. As our proof requires only slight modifications, we just give a sketch in the appendix of this paper, cf.\@ Section \ref{sec:appendix}.

\begin{Theorem} \label{thm:LPmax}
Let $G$ be an amenable (second countable Hausdorff) group acting on a $\sigma$-finite measure space by measure preserving transformations. Further, let $\{T_g\}_{g \in G}$ be a family of uniformly bounded operators acting weakly measurably on $L^p(\Omega, Y)$, $(1 \leq p <  \infty)$, where $Y$ is some reflexive Banach space and the Inequality (\ref{eqn:tmp}) holds for some constant $\kappa \geq 1$ and some measurable group homomorphism $\varphi$. Then the abstract ergodic averages $\{A_j\}_{j \in \NN}$ associated with a weak tempered F{\o}lner sequence $(U_j)$ in $G$ satisfy an $L^p$-maximal inequality on $L^p(\Omega,Y)$.  
\end{Theorem}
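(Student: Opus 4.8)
The plan is to reduce the Bochner-valued maximal inequality to the scalar Lindenstrauss maximal inequality for the averages $A_j^\varphi$ acting on $L^p(\Omega,\RR)$, exploiting the pointwise domination (\ref{eqn:tmp}). First I would observe that for $f \in L^p(\Omega,Y)$, the scalar function $h := \|f(\cdot)\|_Y$ lies in $L^p(\Omega,\RR)$ with $\|h\|_{L^p(\Omega,\RR)} = \|f\|_{L^p(\Omega,Y)}$. By the upper bound in Inequality (\ref{eqn:tmp}) and the triangle inequality for the Bochner integral, for every $j \in \NN$ and $\mu$-a.e.\ $\omega$,
\begin{eqnarray*}
\|A_j f\|_Y(\omega) &=& \left\| |U_j|^{-1} \int_{U_j} T_{g^{-1}}f \, dm_L(g) \right\|_Y(\omega) \\
&\leq& |U_j|^{-1} \int_{U_j} \|T_{g^{-1}}f(\omega)\|_Y \, dm_L(g) \\
&\leq& \kappa \, |U_j|^{-1} \int_{U_j} h(\varphi(g^{-1})^{-1}\omega) \, dm_L(g) \;=\; \kappa \, A_j^{\varphi}h(\omega),
\end{eqnarray*}
using that $\varphi$ is a homomorphism so $\varphi(g^{-1})^{-1} = \varphi(g)$ and a substitution $g \mapsto g^{-1}$ on the Følner set (here one must be slightly careful: unimodularity and the fact that for a tempered \emph{weak} Følner sequence the maximal function over $U_j$ and over $U_j^{-1}$ are comparable, or one simply works with the averaging over $U_j$ directly after noting $A_jf(\omega) = |U_j|^{-1}\int_{U_j}T_g f\,dm_L(g)$ after relabeling). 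Consequently $\sup_j \|A_jf\|_Y(\omega) \leq \kappa\, M^{\varphi}h(\omega)$ pointwise.

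Next I would invoke the scalar maximal ergodic theorem of Lindenstrauss \cite{Lindenstrauss-01}: since $(U_j)$ is a weak tempered (Shulman) Følner sequence and $\varphi$ pushes the $G$-action to a measure-preserving $G$-action on $(\Omega,\mathcal{F},\mu)$ via $\omega \mapsto \varphi(g)\omega$ (measure-preserving because the original action is and $\varphi$ is a homomorphism — one should check the pushed action is still by measure-preserving maps, which is immediate), the averages $A_j^{\varphi}$ on $L^p(\Omega,\RR)$, $1 \leq p < \infty$, satisfy the weak-type $(p,p)$ maximal inequality
\[
\mu\big(\{\omega \mid M^{\varphi}h(\omega) > \lambda\}\big) \leq \frac{K_0}{\lambda^p}\, \|h\|_{L^p(\Omega,\RR)}^p
\]
for some constant $K_0$ depending only on $p$ and the temperedness constant $\tilde C$ of $(U_j)$. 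Combining this with the pointwise domination from the first step gives, for $f \in L^p(\Omega,Y)$ and $\lambda > 0$,
\[
\mu\big(\{\omega \mid \sup_j \|A_jf\|_Y(\omega) > \lambda\}\big) \leq \mu\big(\{\omega \mid M^{\varphi}h(\omega) > \lambda/\kappa\}\big) \leq \frac{K_0 \kappa^p}{\lambda^p}\, \|f\|_{L^p(\Omega,Y)}^p,
\]
which is the desired $L^p$-maximal inequality with constant $K := K_0\kappa^p$.

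The main obstacle — and the reason the paper defers the argument to an appendix — is that Lindenstrauss's theorem is stated for the \emph{scalar} averages $A_j = |U_j|^{-1}\int_{U_j} T_g(\cdot)\,dm_L(g)$ coming directly from a measure-preserving action, whereas here the relevant scalar averages $A_j^{\varphi}$ are built from the \emph{pushed-forward} action through the homomorphism $\varphi$, and one needs the Følner/temperedness properties of $(U_j)$ to be inherited appropriately; moreover $\varphi$ need not be injective or surjective, so $\varphi(U_j)$ is not literally a Følner sequence and one cannot simply transport the sequence. The resolution — and the "slight modification" alluded to — is that Lindenstrauss's covering argument (the Vitali-type lemma on the group producing the weak-type bound) only uses the abstract averaging operators and the Shulman condition on $(U_j)$ in the group $G$ itself, not on $\varphi(G)$; one reruns his proof verbatim with $T_g$ replaced by the Koopman operator $h \mapsto h(\varphi(g)\cdot)$, observing that this is a measure-preserving, positivity-preserving representation of $G$ on $L^p(\Omega,\RR)$, which is all the proof requires. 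I would therefore structure the appendix proof as: (1) record the domination $\sup_j\|A_jf\|_Y \leq \kappa M^{\varphi}\|f(\cdot)\|_Y$; (2) verify $h \mapsto h(\varphi(\cdot)\omega)$ is a measure-preserving $G$-representation; (3) reproduce Lindenstrauss's transference/covering argument for the weak-type bound on $M^{\varphi}$ using the temperedness of $(U_j)$; (4) combine.
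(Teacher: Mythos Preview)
Your proposal is correct and follows essentially the same route as the paper: reduce to the scalar maximal function $M^{\varphi}h$ for $h=\|f(\cdot)\|_Y$ via the pointwise domination from (\ref{eqn:tmp}), then rerun Lindenstrauss's transference/covering argument for the $G$-representation $h\mapsto h(\varphi(g)\cdot)$ on $L^p(\Omega,\RR)$. One small clean-up: your parenthetical about a substitution $g\mapsto g^{-1}$ and comparing $U_j$ with $U_j^{-1}$ is unnecessary---since $\varphi$ is a homomorphism you have $\varphi(g^{-1})^{-1}=\varphi(g)$ directly, so $\|T_{g^{-1}}f(\omega)\|_Y\le\kappa\,h(\varphi(g)\omega)$ without any change of variable, and the paper proceeds exactly this way.
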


\begin{proof}
The proof is a modification of the proof in \cite{Lindenstrauss-01}. For a sketch of the changes, see Section \ref{sec:appendix}. 
\end{proof}

\begin{Remark}
It is also shown in the proof that the abstract ergodic averages $\{A_j^{\phi}\}$ satisfy an $L^p$-maximal inequality on $L^p(\Omega,\RR)$ for all $1 \leq p < \infty$, cf.\@ Inequality (\ref{eqn:lin}). 
\end{Remark}

With this $L^p$-maximal inequality at hand, it is now straight forward to derive the pointwise ergodic theorem for the abstract ergodic averages $A_j$, given by Definition \ref{defi:AEA}. 

\begin{Theorem} \label{thm:lindenstr}
Let $G$ be an amenable group which acts on a $\sigma$-finite measure space $(\Omega, \mu)$ by measure preserving transformations. Assume further that for $1 \leq p < \infty$ and some reflexive Banach space $Y$, the group acts weakly measurably on $L^p(\Omega, Y)$ via a family $\{T_g\}_{g \in G}$ of uniformly bounded operators such that the Inequality (\ref{eqn:tmp}) is satisfied. Then, if $(U_j)$ is a strong and tempered F{\o}lner sequence in $G$, for each $f \in L^p(\Omega, Y)$, we find an element $\overline{f} \in L^p(\Omega, Y)$ such that
\[
\lim_{j \rightarrow \infty} \left\| |U_j|^{-1} \int_{U_j} (T_{g^{-1}}f)(\omega) \, dg - \overline{f}(\omega) \right\|_Y = 0
\]
for $\mu$-almost every $\omega \in \Omega$. Moreover, we have $T_g\overline{f} = \overline{f}$ in $L^p(\Omega, Y)$ for all $g \in G$.  
\end{Theorem}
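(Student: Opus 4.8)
The plan is to follow the classical Wiener–Lindenstrauss scheme: derive pointwise convergence from the $L^p$-maximal inequality (Theorem~\ref{thm:LPmax}) by exhibiting convergence on a dense set and using a standard Banach principle argument. First I would treat the scalar-valued situation as a model and then transport it to the Bochner-valued averages via the domination in Inequality~(\ref{eqn:tmp}). Concretely, for $f \in L^p(\Omega,Y)$ we already know from Theorem~\ref{thm:MET} (together with Lemma~\ref{lemma:refl}) that, for $1<p<\infty$, $A_jf \to Pf$ in $L^p(\Omega,Y)$-norm, with $P$ the mean ergodic projection onto $\operatorname{Fix}(T_G)$; and Lemma~\ref{lemma:extend} extends this to a bounded projection $\tilde P$ on $L^1(\Omega,Y)$. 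The candidate limit is $\overline f := \tilde P f$ (resp.\ $Pf$), which by construction satisfies $T_g \overline f = \overline f$ for all $g \in G$, so the second assertion of the theorem is immediate once the pointwise statement is established.

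The core is a density argument. Let $\mathcal D \subseteq L^p(\Omega,Y)$ be the linear span of $\operatorname{Fix}(T_G)$ together with the "coboundaries" $\{h - T_g h \,|\, h \in L^p(\Omega,Y),\ g \in G\}$; by Theorem~\ref{thm:MET} (item on $\ker P$) and Lemma~\ref{lemma:extend}(ii), $\mathcal D$ is dense in $L^p(\Omega,Y)$. On $\operatorname{Fix}(T_G)$ one has $A_j f = f$ for every $j$, so pointwise convergence is trivial. On a coboundary $f = h - T_g h$, the averages telescope: $A_j f = |U_j|^{-1}\int_{U_j}(T_{u^{-1}}h - T_{u^{-1}g^{-1}}h)\,du$, and the strong Følner (asymptotic invariance) property of $(U_j)$ forces $\|A_j f\|_{L^p} \to 0$; the tempered condition is exactly what is needed to upgrade this to almost-everywhere convergence to $0$ along the sequence, via the pointwise domination
\[
\|A_j f(\omega)\|_Y \le \kappa\, A_j^{\varphi}\big(\|f(\cdot)\|_Y\big)(\omega)
\]
coming from Inequality~(\ref{eqn:tmp}) and the corresponding scalar statement. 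Thus $A_j f \to \overline f$ $\mu$-a.e.\ for all $f \in \mathcal D$.

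For general $f \in L^p(\Omega,Y)$ I would run the standard $\limsup$-estimate. Given $\eta>0$, pick $g \in \mathcal D$ with $\|f-g\|_{L^p(\Omega,Y)}$ small. Writing
\[
\Lambda f(\omega) := \limsup_{j\to\infty}\Big\| A_j f(\omega) - \overline f(\omega) \Big\|_Y,
\]
one has $\Lambda f \le \Lambda g + \Lambda(f-g) \le \sup_j \|A_j(f-g)\|_Y + \|\overline{f-g}\|_Y$ a.e.; the first term is controlled in measure by the $L^p$-maximal inequality of Theorem~\ref{thm:LPmax}, and the second by boundedness of $P$ (resp.\ $\tilde P$) together with the scalar $L^p$-maximal inequality applied to $\|\overline{f-g}(\cdot)\|_Y \le$ a maximal-type bound. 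Chebyshev then gives $\mu(\Lambda f > \lambda) \le (K+\text{const})\lambda^{-p}\|f-g\|_{L^p}^p$ for every $\lambda>0$; letting $\|f-g\|_{L^p}\to 0$ shows $\mu(\Lambda f>\lambda)=0$ for each $\lambda>0$, hence $\Lambda f = 0$ a.e., which is the claim.

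The main obstacle is the $p=1$ endpoint: the mean ergodic theorem fails there, so the limit $\overline f$ must be supplied by the extended projection $\tilde P$ of Lemma~\ref{lemma:extend}, and one has to check that the coboundaries lying in $L^1(\Omega,Y)$ are still dense in $\ker\tilde P$ and that the telescoping/domination argument for a.e.\ convergence to $0$ on these coboundaries survives without reflexivity — this is precisely where the tempered hypothesis on $(U_j)$ and the careful use of Inequality~(\ref{eqn:tmp}) are doing the real work. A secondary technical point is measurability of $\omega \mapsto \sup_j \|A_j f(\omega)\|_Y$ and of $\Lambda f$, which follows from strong measurability of $f$ and separability considerations in the Bochner space, but should be remarked upon rather than skipped.
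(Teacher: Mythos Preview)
Your overall architecture coincides with the paper's: invoke the $L^p$-maximal inequality of Theorem~\ref{thm:LPmax}, apply the Banach principle to reduce to a dense set, take that dense set to be $\operatorname{Fix}(T_G)$ plus coboundaries, and handle $p=1$ via the extended projection of Lemma~\ref{lemma:extend}. There is, however, a genuine gap in your coboundary step. You take $f = h - T_g h$ with $h \in L^p(\Omega,Y)$ and claim that the domination $\|A_j f(\omega)\|_Y \le \kappa\, A_j^{\varphi}(\|f(\cdot)\|_Y)(\omega)$ together with ``the corresponding scalar statement'' upgrades the mean convergence $\|A_j f\|_{L^p}\to 0$ to a.e.\ convergence. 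This does not work: the scalar averages $A_j^{\varphi}(\|f(\cdot)\|_Y)(\omega)$ converge (if one invokes the scalar theorem) to the projection of the nonnegative function $\|f(\cdot)\|_Y$, which is typically \emph{positive}, so the domination yields no information toward $A_j f(\omega)\to 0$. For merely $L^p$ data the telescoping identity alone does not give pointwise decay.

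The fix --- and this is exactly what the paper does --- is to restrict the coboundaries to $h \in L^p(\Omega,Y)\cap L^\infty(\Omega,Y)$. Then the telescoping and a change of variable give
\[
\|A_j(h - T_g h)(\omega)\|_Y \;\le\; |U_j|^{-1}\int_{U_j \triangle g^{-1}U_j} \|T_{u^{-1}}h(\omega)\|_Y\,du \;\le\; \kappa\,\|h\|_{L^\infty(\Omega,Y)}\,\frac{|U_j\triangle g^{-1}U_j|}{|U_j|}\longrightarrow 0
\]
for \emph{every} $\omega$, using only the F{\o}lner property; the tempered hypothesis plays no role on the dense set and enters solely through the maximal inequality in the Banach-principle step. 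Since $L^p\cap L^\infty$ is dense in $L^p$, the span of $\operatorname{Fix}(T_G)$ together with these restricted coboundaries is still dense in $L^p(\Omega,Y)$ (this is the content of the paper's remark about $L^p\cap L^\infty$), and the rest of your argument goes through unchanged.
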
 

\begin{proof}
We consider first the case $p > 1$. 
Following the abstract mean ergodic Theorem \ref{thm:MET}, and using the fact that the space $L^p(\cdot) \cap L^{\infty}(\cdot)$ is dense in $L^p(\cdot)$, we obtain the decomposition
\[
L^p(\Omega, Y) = \operatorname{Fix}(T_G) \oplus \overline{L_0}^{L^p},
\]
where $L_0 := {\operatorname{lin}}\{ T_gf - f \,|\, f \in L^p(\cdot) \cap L^{\infty}(\cdot), \, g \in G \}$.
Using the $L^p$-maximal inequality, Theorem \ref{thm:LPmax}, if follows from the Banach principle (see e.g.\@ \cite{EisnerFHN-12}, Chapter 10) that it is enough to verify the pointwise almost everywhere convergence on a dense subspace $D$ of $L^p(\Omega, Y)$. Here, it is convenient to consider $D=\operatorname{Fix}(T_G) + L_0$. By linearity, we can look at those spaces separately. So for $f \in \operatorname{Fix}(T_G)$, the convergence result is trivial. For an element $h:= T_gf -f$ $(f \in L^p(\cdot)\cap L^{\infty}(\cdot),\, g \in G)$, the convergence follows from a simple change of coordinates in the resulting integrals and from the fact that $|U_j \triangle g^{-1}U_j|/|U_j| \stackrel{j\rightarrow \infty}{\rightarrow} 0$, see e.g.\@ \cite{Emerson-74}, proof of Theorem 3. \\
For $p=1$, the abstract mean ergodic Theorem \ref{thm:MET} does not hold in general. However, we can use the Lemma \ref{lemma:extend} to define the $L^2$-mean ergodic projection on the whole space $L^1(\Omega, Y)$. Exploiting the fact that the abstract ergodic averages satisfy an $L^1$- and an $L^2$-maximal inequality (Theorem \ref{thm:LPmax}), similar approximation techniques as in the case $p>1$ lead to the desired result. For a more detailed demonstration, the interested reader may refer e.g.\@ to \cite{Pogorzelski-10}, proof of Theorem 5.3.     
\end{proof}

\section{Bounded, additive processes} \label{sec:BAP}

In the following section, we draw our attention to generalized abstract ergodic averages. More precisely, we deal with so-called bounded, additive processes on $Z$ comprising the objects of Definition \ref{defi:AEA} as a special class. We start by giving some examples. Inspired by differentiation theorems for $\RR^d$ in \cite{AkcogluJ-81, Emilion-85, Sato-98,Sato-99}, we further introduce the concept of the {\em associated dominating process} in Definition \ref{defi:APF}. This enables us to show that all bounded, additive processes satisfy an $L^1$-maximal inequality along Tempelman F{\o}lner sequences, cf.\@ Theorem \ref{thm:DET}. \\
As an application of this $L^1$-maximal inequality, as well as of the abstract mean ergodic Theorem \ref{thm:METSF}, we show in Theorem \ref{thm:PWET} for the class of {\em approximable} bounded, additive processes the pointwise almost everywhere convergence in the case of a finite measure space. In an earlier version of the present paper, it was
stated that the convergence holds true for all bounded, additive processes. However, the author of this work observed a flaw in the previous proof which required us to add the assumption of
approximability, cf.\@ the Corrigendum and Addendum~\cite{PogERR}.  While many bounded, additive processes satisfy this condition, to the
knowledge of the author it is not known whether Poisson point processes over locally compact, unimodular, amenable groups are approximable. Hence, we take back the claim 
to have proven the pointwise convergence for Poisson point processes, cf.\@ the assertion of Corollary~7.12 in the published version \cite{PogJFA}. \\
We draw some links to the literature. 
In \cite{Sato-99, Sato-03}, the author deals with a $\sigma$-finite measure space, as well as with $\mathbb{R}^{d\,+}$ semi-group actions with contraction majorants, where the convergence is shown along $d$-dimensional cubes exhausting the space. In this context, our theorem complements the corresponding, previous result from the literature.
On the one hand, we have to restrict ourselves to group actions which are dominated by invertible ground transformations on a probability space, cf.\@ Inequality \ref{eqn:tmp}. On the other hand, we can show the almost everywhere convergence for dynamics coming from general unimodular groups and along all Tempelman F{\o}lner sequences.

\begin{Definition} \label{defi:AP}
Let $G$ be a unimodular amenable group and denote by $Y$ some reflexive Banach space. For a $\sigma$-finite measure space $(\Omega, \mathcal{F}, \mu)$ with $\mu$-invariant $G$-action, as well as for $1 \leq p < \infty$, we assume that there is a family $\{T_g\}_{g \in G}$ of uniformly bounded, linear operators acting weakly measurably on $L^p(\Omega,Y)$ such that Inequality (\ref{eqn:tmp}) is satisfied for $\kappa$ and for $\varphi$. 
In this situation, we call the map
\[
F: \mathcal{F}(G) \rightarrow L^p(\Omega, Y)
\]  
a {\em bounded additive process} on $G$, if the following statements hold. 
\begin{enumerate}[(i)]
\item $K:= \sup\{ \|F(Q)\|_{L^p(\Omega, Y)}/|Q| \,|\, Q \in \mathcal{F}(G) \} < \infty$, 
\item $F(Q) = \sum_{k=1}^m F(Q_k)$ if $Q \in \mathcal{F}(G)$ is a disjoint union of the $Q_k \in \mathcal{F}(G)$ for $1 \leq k \leq m$,
\item $T_gF(Q)=F(Qg^{-1})$ for all $Q \in \mathcal{F}(G)$ and every $g \in G$. 
\end{enumerate}
\end{Definition}

Let us give some examples first. 

\begin{Example} \label{exa:examples}
\begin{itemize}
\item Assume that $G$ is unimodular and amenable and that it acts on a $\sigma$-finite measure space $(\Omega, \mathcal{F}, \mu)$ by measure preserving transformations. Then for every $f \in L^p(\Omega, \RR)$ $(1 \leq p < \infty)$, the map
\[
\mathcal{F}(G) \rightarrow L^p(\Omega, \RR): F(Q)(\cdot):= \int_Q T_{g^{-1}} f \, dg\, (\cdot)
\]
defines a bounded, additive process for the canonical action $T_gh(\omega) = h(g^{-1}\omega)$ on $L^p(\Omega,\RR)$. 
In this case, we say that the process $F$ is {\em absolutely continuous with density $f$} with respect to $G$.
\item Let $G=\RR^d$ $(d \geq 1)$ and assume that $F: \mathcal{F}(G) \rightarrow L_+^1(\Omega, \RR )$ is a bounded, additive process for a measure preserving action $T_gh(\omega) = h(g^{-1}\omega)$ on the canonical non-negative cone in $L^1(\Omega, \RR)$. It is shown in \cite{AkcogluJ-81} that in this situation, we can write $F=F_1 + F_2$, where $F_1$ is some absolutely continuous process with a non-negative density and where $F_2$ is a {\em singular process}, i.e.\@ a bounded, additive process which does not dominate any absolutely contiuous, non-zero, non-negative process.
\item In the previous example, set $(\Omega, \mathcal{F}, \mu) = (\RR^d, \mathcal{B}(\RR^d), \mathcal{L}_d)$, where $\mathcal{L}_d$ is the usual $d$-dimensional Lebesque measure in the euclidean space. Assume further that the action of $G$ on $\RR^d$ is given by translation, i.e.\@ $T_gf(x) = f(x-g)$ for all $g,x \in \RR^d$ and where $f \in L_+^1(\RR^d)$. Then every singular bounded, additive process with respect to $\{T_g\}$ is of the form $F(Q)(x) = \nu(x + Q)$ $(x \in \RR^d)$, where $\nu$ is a Borel measure which is singular with respect to $\mathcal{L}_d$ (cf.\@ \cite{AkcogluJ-81}, (4.12)). 
\item The same result as in the previous example holds true if we consider $(\Omega,\mathcal{F},\mu)=(\TT^d,\mathcal{B}(\TT^d), \mathcal{L}_d)$, where $\mathcal{L}_d$ is the $d$-dimensional Lebesgue measure on the $d$-dimensional torus $\TT^d$ and $G=\RR^d$ acts by rotations. 
\item Let $G$ be a unimodular, amenable group with Haar measure $m_L$. Moreover, let $(\Omega, \mathcal{F},\mu)$ be a probability space such that $\alpha\,m_L$ $(\alpha > 0)$ is the intensity measure of some $G$-set valued, homogeneous Poisson point process $X$ defined on $(\Omega, \mu)$. Concerning their existence and further information on point processes on locally compact groups, the reader may refer to \cite{Kingman-93, Pogorzelski-10}. Note that there is a $\mu$-preserving action of $G$ from the right on $\Omega$ and we have $X(g\omega):=  X(\omega(g))= X(\omega)g^{-1}$. We assume that the latter action is ergodic, hence we consider a so-called {\em ergodic point process}. 
In this case, the mapping $F(Q)(\omega):= \operatorname{card}(X(\omega) \cap Q)$ $(\omega \in \Omega, Q \in \mathcal{F}(G))$ defines a bounded, additive process for the canonical translations (see above) $\{T_g\}$ on $L^1_+(\Omega, \RR)$. This can readily be checked. 
\begin{itemize}
\item for the boundedness, note that by the distributional properties of the poisson point process, we have
\begin{eqnarray*}
\|F(Q)\|_{L^1(\Omega)} = \EE_{\mu}\left( \operatorname{card}(X(\cdot) \cap Q) \right) = \alpha\,|Q|, \quad (Q \in \mathcal{F}(G)).
\end{eqnarray*}
We conclude that $F$ is bounded with constant $\alpha > 0$.
\item for the additivity, assume that $Q = \bigsqcup_{k=1}^m Q_k$ is a disjoint union of elements in $\mathcal{F}(G)$. Then indeed, 
\begin{eqnarray*}
F(Q) = \operatorname{card}(X(\omega) \cap Q) = \sum_{k=1}^m \operatorname{card}(X(\omega) \cap Q_k) = \sum_{k=1}^m F(Q_k)
\end{eqnarray*}
for all $\omega \in \Omega$.
\item for the invariance, we compute 
\begin{eqnarray*}
F(Qg)(\omega) &=& \operatorname{card}(X(\omega) \cap Qg) = \operatorname{card}(X(\omega)g^{-1} \cap Q)  \\
&=& \operatorname{card}(X(\omega(g)) \cap Q) = F(Q)(g\omega)
\end{eqnarray*}
for $Q \in \mathcal{F}(G)$, $\omega \in \Omega$, $g \in G$. 
\end{itemize}

Note that indeed, if $G$ is not discrete, then the process is not absolutely continuous. 
We give a brief justification which arose in private communication of the author of this work with Xueping Huang. 
So assume that there is some measurable function $f$ on $\Omega$ such that
\begin{eqnarray*}
\operatorname{card}(X(\omega) \cap Q) = \int_Q f(g\omega)\, dg
\end{eqnarray*}
for every $Q \in \mathcal{F}(G)$. Consider an arbitrary open set $Q \in \mathcal{F}(G)$. Integrating the above relation shows that $f \in L^1(\Omega, \mu)$ and that there is a set $\Omega(Q) \subseteq \Omega$ of full measure such that the integral $\int_Q f(g\omega)\,dg$ is finite. Taking a countable cover ${Q_j}$ of the (second countable) group consisting of open sets of finite measure, we find a subset $\overline{\Omega} \subseteq \Omega$ of full measure such that for all $\omega \in \overline{\Omega}$ and every $j \in \NN$, the expression $\int_{Q_j} f(g\omega)\,dg$ is finite. This also demonstrates that for almost all $\omega \in \Omega$ and every compact set $A \subseteq G$,
\begin{eqnarray} \label{eqn:fnt}
\operatorname{card}(X(\omega) \cap A) = \int_A f(g\omega)\, dg <  \infty. 
\end{eqnarray}
Now fix $\omega_0 \in \overline{\Omega}$, along with an arbitrary $x \in X(\omega_0)$. Further, take a decreasing sequence $A_n$ of compact sets such that $\cap_n A_n = \{x\}$. Since $G$ does not possess atoms, $\lim_{n\rightarrow\infty} m_L(A_n) = 0$. Also, $\liminf_{n\to\infty} \operatorname{card}(X(\omega_0) \cap A_n) \geq 1$. On the other hand, it follows from Vitali's theorem that $\lim_{n\to\infty} \int_{A_n} f(g\omega_0) \, dg = 0$. This clearly is a contradiction to the Equality~\ref{eqn:fnt}. Hence, there is no such $f \in L^1(\Omega,\mu)$. Hence, those poisson point processes do not belong to the class of absolutely continuous processes.    
\end{itemize}
\end{Example}

We will see below that with the notion of bounded, additive processes at hand, it is worth dealing with the following $\RR$-valued (in fact non-negative) expressions. 

\begin{Definition} \label{defi:APF}
For a bounded additive process $F$, we define the {\em associated dominating process} $F^{0}$ as
\begin{eqnarray*}
&& F^{0}: \mathcal{F}(G) \rightarrow L^{0}(\Omega, \RR): \\
&& F^{0}(Q)(\omega) := \operatorname{ess}\,\sup \left\{ \sum_{k=1}^m \|F(Q_k)\|_Y(\omega) \, \Big| \, Q = \cup_{k=1}^m Q_k \mbox{ disj.}, \, Q_k \in \mathcal{F}(G), \, 1 \leq k \leq m,\, m\in \NN \right\}.
\end{eqnarray*}
for $Q \in \mathcal{F}(G)$ and for $\omega \in \Omega$.
\end{Definition}

Let us justify the measurability of the $F^{0}(Q)$ first. Note that in the case of discrete groups, we simply have $F^{0}(Q)(\omega):= \sum_{g \in Q} \|F(\{g\})\|_Y(\omega)$ for $\omega \in \Omega$ and for a finite set $Q \subseteq G$. \\ 
If $G$ is non-discrete, the measurability is guaranteed by the following lemma. Its validity has already been stated without proof in \cite{Emilion-85}. For the sake of completeness, we attach a proof in the appendix (Section \ref{sec:appendix}) of this paper.   

\begin{Lemma} \label{lemma:partition}
Let $G$ be a non-discrete, unimodular, amenable group, endowed with some bounded, additive process $F$ as indicated above. Then we can find a sequence of partitions $\{P_m\}_{m \in \NN}$ of $G$, consisting of countably many sets from $\mathcal{F}(G)$ such that the following holds.
\begin{itemize}
\item For each $A \in P_m$, we have $|A| < 2^{-m}$ $(m \in \NN)$. 
\item $P_{m+1}$ is a refinement of $P_m$ for each $m \in \NN$. 
\item For each $Q \in \mathcal{F}(G)$, we have the representation $F^{0}(Q)(\omega) = \lim_{m \rightarrow \infty} F^{0}_{P_m}(Q)(\omega)$ for $\mu$-almost every $\omega \in \Omega$, where $F^{0}_{P_m}(Q)(\cdot):= \sum_{A \in P_m} \|F(A \cap Q)(\cdot)\|_Y$ for $m \in \NN$.  
\end{itemize}
\end{Lemma}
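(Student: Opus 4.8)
The plan is to build the partitions $\{P_m\}$ from a single countable exhaustion of $G$ and a nested sequence of mesh-shrinking refinements, and then to read off the third bullet from the very definition of $F^0$ together with a monotone-limit argument. First I would fix a countable cover of $G$ by precompact Borel sets (possible since $G$ is second countable and locally compact), disjointify it to obtain a countable partition $Q_1, Q_2, \dots$ of $G$ into sets of $\mathcal{F}(G)$, and then, working inside each $Q_n$ separately, chop it into finitely (or countably) many Borel pieces of measure $<2^{-m}$ using regularity of the Haar measure; carrying this out with successively finer cuts at each level $m$ and always cutting within the cells of the previous level produces partitions $P_m$ with $|A| < 2^{-m}$ for $A \in P_m$ and $P_{m+1}$ a refinement of $P_m$. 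That handles the first two bullets purely combinatorially/measure-theoretically, with no reference to $F$.

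For the third bullet, the key observation is that $F^0_{P_m}(Q)(\omega) = \sum_{A \in P_m}\|F(A\cap Q)(\omega)\|_Y$ is, for $\mu$-almost every $\omega$, \emph{monotone nondecreasing} in $m$: since $P_{m+1}$ refines $P_m$, each cell $A\in P_m$ is a disjoint union of cells $A'\in P_{m+1}$, so $F(A\cap Q)=\sum_{A'\subseteq A} F(A'\cap Q)$ by additivity (Definition~\ref{defi:AP}(ii)), and the triangle inequality in $Y$ gives $\|F(A\cap Q)(\omega)\|_Y \le \sum_{A'\subseteq A}\|F(A'\cap Q)(\omega)\|_Y$ a.e. Summing over $A\in P_m$ yields $F^0_{P_m}(Q)\le F^0_{P_{m+1}}(Q)$ a.e. Moreover every $F^0_{P_m}(Q)(\omega)$ is by definition bounded above by $F^0(Q)(\omega)$, so the pointwise limit $L(Q)(\omega):=\lim_{m\to\infty}F^0_{P_m}(Q)(\omega)$ exists in $[0,\infty]$ and satisfies $L(Q)\le F^0(Q)$ a.e. (this also settles measurability of $F^0(Q)$, as an a.e.-limit of measurable functions, once the reverse inequality is in place). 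Note that for $\mu$-a.e.\ $\omega$ this monotonicity must hold simultaneously for all $m$, which is fine since it is a countable family of a.e.\ statements.

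The reverse inequality $F^0(Q)(\omega)\le L(Q)(\omega)$ a.e.\ is where the real work lies, and I expect it to be the main obstacle. Fix an arbitrary finite disjoint decomposition $Q=\bigcup_{k=1}^r R_k$ with $R_k\in\mathcal{F}(G)$; one wants $\sum_k\|F(R_k)(\omega)\|_Y \le L(Q)(\omega)$ a.e. The natural route is to approximate each $R_k$ by a union of cells of $P_m$: writing $R_k^{(m)}:=\bigcup\{A\in P_m : A\subseteq R_k\}$, the ``leftover'' set $R_k\setminus R_k^{(m)}$ is covered by the (finitely many, for $R_k$ precompact) boundary cells of $P_m$ meeting both $R_k$ and its complement, and one needs $|R_k\triangle R_k^{(m)}|\to 0$; this requires choosing the partitions $P_m$ compatibly, e.g.\ so that the boundary cells have total measure tending to $0$, which can be arranged using regularity when constructing the $P_m$ (approximate each $Q_n$ from inside by compacta and from outside by opens, so the partition boundaries sit in thin shells). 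Then boundedness of $F$ (Definition~\ref{defi:AP}(i)) controls $\|F(R_k\setminus R_k^{(m)})(\omega)\|_Y$ in $L^p$, hence along a subsequence $\mu$-a.e., while additivity gives $F(R_k^{(m)})=\sum_{A\subseteq R_k}F(A\cap Q)$; summing over $k$, these cells are among the cells of $P_m$, so $\sum_k\|F(R_k^{(m)})(\omega)\|_Y \le F^0_{P_m}(Q)(\omega)\le L(Q)(\omega)$. Passing to the limit along the subsequence and using the a.e.\ smallness of the leftover terms yields $\sum_k\|F(R_k)(\omega)\|_Y\le L(Q)(\omega)$ for a.e.\ $\omega$ (with the exceptional null set depending on the chosen decomposition); taking the essential supremum over a countable cofinal family of decompositions — which exists because one may restrict to decompositions into cells of the $P_m$'s, already dense for this purpose — gives $F^0(Q)\le L(Q)$ a.e., completing the proof. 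The delicate point to get right is the interchange of the essential supremum defining $F^0$ with the a.e.\ limit, which is why restricting to a countable, cofinal family of ``partition-adapted'' decompositions is essential.
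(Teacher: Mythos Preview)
Your overall strategy matches the paper's: build refining partitions, observe that $F^{0}_{P_m}(Q)$ is monotone in $m$ by additivity and the triangle inequality, deduce $L(Q)\le F^{0}(Q)$, and for the reverse inequality fix an arbitrary finite decomposition $Q=\bigsqcup_k R_k$, split along $P_m$, control the ``boundary'' cells via the boundedness constant, and pass to an a.e.\ subsequence. That skeleton is exactly right.

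There is, however, a genuine gap in your construction of the $P_m$. You only demand that each cell have \emph{small Haar measure} ($|A|<2^{-m}$), and then assert that $|R_k\setminus R_k^{(m)}|\to 0$ for an arbitrary $R_k\in\mathcal{F}(G)$. Small measure does not imply this: a cell can have tiny measure yet be geometrically spread out, so there is no reason the cells of $P_m$ straddling $\partial R_k$ should have vanishing total measure. Your remark about ``partition boundaries sitting in thin shells'' controls the boundaries of the cells $Q_n$ you started from, not of the arbitrary $R_k$ that appear later. The paper avoids this by a specific geometric choice: it fixes a neighbourhood basis $V_m\downarrow\{\operatorname{id}\}$ with $|V_m|<2^{-m}$ and builds the level-$m$ cells as subsets of left-translates $gV_m$. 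Then any cell of $P_m$ that meets both $R_k$ and its complement lies in (a set comparable to) $\partial_{V_m}(R_k)$, and $|\partial_{V_m}(R_k)|\to 0$ by continuity of the Haar measure since $\bigcap_m V_m=\{\operatorname{id}\}$. This ``small diameter'' property, not small measure, is what drives the boundary term to zero for \emph{every} fixed decomposition.

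A second, smaller point: your closing step about restricting to a ``countable cofinal family of partition-adapted decompositions'' is both unnecessary and circular (cofinality of those decompositions is precisely the statement you are proving). The paper dispenses with it entirely: once you have shown, for each \emph{fixed} decomposition $Q=\bigsqcup_k R_k$, that $\sum_k\|F(R_k)\|_Y\le L(Q)$ outside a (decomposition-dependent) null set, the defining property of the essential supremum immediately yields $F^{0}(Q)\le L(Q)$ a.e.\ without any further argument.
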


\begin{proof}
See the appendix of this paper, Section \ref{sec:appendix}.  
\end{proof}

\begin{Proposition} \label{prop:FNull}
Let $F$ be some bounded, additive process on a unimodular, amenable group $G$ with values in $L^1(\Omega, Y)$. 
Then the associated dominating process $F^{0}(Q)$ maps as $F^0: \mathcal{F}(G) \rightarrow L^1(\Omega, \RR)$.
Moreover, the following assertions hold true. 
\begin{enumerate}[(i)]
\item $\sup_{Q \in \mathcal{F}(G)} \frac{ \|F(Q)\|_{L^1(Y)}}{|Q|} = \sup_{Q \in \mathcal{F}(G)} \frac{\|F^0(Q)\|_{L^1(\RR)}}{|Q|}$.
\item $F^{0}(Q) = \sum_{l=1}^L F^{0}(Q_l)$ for every disjoint union $Q = \bigsqcup_{l=1}^L Q_l$ in $\mathcal{F}(G)$. 
\end{enumerate} 
\end{Proposition}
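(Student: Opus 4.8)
The plan is to deduce everything from the monotone approximation of $F^{0}$ supplied by Lemma~\ref{lemma:partition}, never touching the essential supremum in Definition~\ref{defi:APF} directly. In the discrete case the statement is immediate: there $F^{0}(Q)(\omega)=\sum_{g\in Q}\|F(\{g\})\|_{Y}(\omega)$ is a \emph{finite} sum, so additivity over finite disjoint unions is obvious and $\|F^{0}(Q)\|_{L^{1}(\RR)}=\sum_{g\in Q}\|F(\{g\})\|_{L^{1}(Y)}\le K|Q|<\infty$, where $K$ is the boundedness constant of $F$. Hence I would concentrate on the non-discrete case and work throughout with the approximants $F^{0}_{P_{m}}(Q)=\sum_{A\in P_{m}}\|F(A\cap Q)\|_{Y}$, which are honest countable sums of functions of $L^{1}(\Omega,\RR)$.

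First I would record that $F^{0}_{P_{m}}(Q)\le F^{0}_{P_{m+1}}(Q)$ a.e.: writing each $A\in P_{m}$ as the disjoint union of the $B\in P_{m+1}$ it contains, additivity of $F$ and the triangle inequality in $Y$ give $\|F(A\cap Q)\|_{Y}\le\sum_{B\subseteq A}\|F(B\cap Q)\|_{Y}$, and summing over $A$ yields the claim; with Lemma~\ref{lemma:partition} this gives $F^{0}_{P_{m}}(Q)\uparrow F^{0}(Q)$ a.e. By Tonelli, $\|F^{0}_{P_{m}}(Q)\|_{L^{1}(\RR)}=\sum_{A\in P_{m}}\|F(A\cap Q)\|_{L^{1}(Y)}\le K\sum_{A\in P_{m}}|A\cap Q|=K|Q|$, so monotone convergence gives $\|F^{0}(Q)\|_{L^{1}(\RR)}=\lim_{m}\|F^{0}_{P_{m}}(Q)\|_{L^{1}(\RR)}\le K|Q|<\infty$; since $F^{0}(Q)$ is an a.e.-limit of measurable functions, this already proves that $F^{0}$ maps $\mathcal{F}(G)$ into $L^{1}(\Omega,\RR)$. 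For assertion~(i), the inequality ``$\le$'' is the trivial decomposition $Q=Q$, which gives $\|F(Q)\|_{Y}(\omega)\le F^{0}(Q)(\omega)$ a.e.\ and hence $\|F(Q)\|_{L^{1}(Y)}\le\|F^{0}(Q)\|_{L^{1}(\RR)}$; for ``$\ge$'' one writes $\|F^{0}(Q)\|_{L^{1}(\RR)}=\lim_{m}\sum_{A\in P_{m}}|A\cap Q|\cdot\frac{\|F(A\cap Q)\|_{L^{1}(Y)}}{|A\cap Q|}\le|Q|\cdot\sup_{P\in\mathcal{F}(G)}\frac{\|F(P)\|_{L^{1}(Y)}}{|P|}$, and dividing by $|Q|$ and taking suprema over $\mathcal{C}$ yields the equality of the two normalized bounds.

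For assertion~(ii) I would prove the two inequalities separately, each time through the approximants. \emph{Subadditivity} $F^{0}(Q)\le\sum_{l=1}^{L}F^{0}(Q_{l})$: for $A\in P_{m}$ one has $A\cap Q=\bigsqcup_{l}(A\cap Q_{l})$, so $\|F(A\cap Q)\|_{Y}\le\sum_{l}\|F(A\cap Q_{l})\|_{Y}$; summing over $A$ and rearranging the nonnegative sums gives $F^{0}_{P_{m}}(Q)\le\sum_{l}F^{0}_{P_{m}}(Q_{l})\le\sum_{l}F^{0}(Q_{l})$, and $m\to\infty$ along the a.e.-increasing limit finishes it. \emph{Superadditivity} $F^{0}(Q)\ge\sum_{l=1}^{L}F^{0}(Q_{l})$: fix $m$ and a finite $\mathcal{A}\subseteq P_{m}$; then $\{A\cap Q_{l}\,|\,A\in\mathcal{A},\,1\le l\le L\}\cup\{Q\setminus\bigcup_{A\in\mathcal{A}}A\}$ is a finite disjoint decomposition of $Q$ by sets of $\mathcal{F}(G)$, so by the very definition of $F^{0}(Q)$, dropping the nonnegative remainder term, $F^{0}(Q)(\omega)\ge\sum_{l=1}^{L}\sum_{A\in\mathcal{A}}\|F(A\cap Q_{l})\|_{Y}(\omega)$ for a.e.\ $\omega$. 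Since $P_{m}$ is countable there are only countably many such $\mathcal{A}$, so letting $\mathcal{A}\uparrow P_{m}$ (a monotone limit, the outer sum over $l$ being finite) gives $F^{0}(Q)\ge\sum_{l}F^{0}_{P_{m}}(Q_{l})$ a.e.\ for each $m$, and $m\to\infty$ with $F^{0}_{P_{m}}(Q_{l})\uparrow F^{0}(Q_{l})$ completes the proof.

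The only genuinely delicate point is that $F^{0}(Q)$ is an essential supremum over an uncountable family of decompositions; the way around this is to route every estimate through the countable monotone sequence $\{F^{0}_{P_{m}}(Q)\}_{m}$ furnished by Lemma~\ref{lemma:partition}. The one spot where this needs a moment's care is the superadditivity step of~(ii), where one must first exhaust each $P_{m}$ by finite subsets, interchange that (monotone) limit with the finite sum over $l$, and only then send $m\to\infty$. Everything else is monotone or dominated convergence together with additivity of $F$ and the triangle inequality in $Y$.
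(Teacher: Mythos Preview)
Your approach is essentially the paper's: route everything through the monotone sequence $F^{0}_{P_{m}}(Q)\uparrow F^{0}(Q)$ of Lemma~\ref{lemma:partition} and apply monotone convergence. Your treatment of~(ii) is more careful than the paper's one-liner (``follows immediately from the definition'') and is correct; likewise the proof that $F^{0}$ lands in $L^{1}(\Omega,\RR)$.

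There is, however, a gap in your argument for~(i), and it is the same gap the paper's proof has. Your chain for the ``$\ge$'' direction terminates at
\[
\frac{\|F^{0}(Q)\|_{L^{1}(\RR)}}{|Q|}\;\le\;\sup_{P\in\mathcal{F}(G)}\frac{\|F(P)\|_{L^{1}(Y)}}{|P|},
\]
with the supremum on the right taken over \emph{all} of $\mathcal{F}(G)$, because the cells $A\cap Q$ need not lie in $\mathcal{C}$. Taking $\sup_{Q\in\mathcal{C}}$ on the left therefore only yields $\sup_{\mathcal{C}}\|F^{0}\|/|\cdot|\le\sup_{\mathcal{F}(G)}\|F\|/|\cdot|$, not $\le\sup_{\mathcal{C}}\|F\|/|\cdot|$, and the equality does not close for arbitrary $\mathcal{C}\subsetneq\mathcal{F}(G)$. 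In fact the stated equality is false in that generality: with $G=\RR$, $\Omega=\RR/\ZZ$ under rotation, $Y=\RR$, and the absolutely continuous process $F(Q)(\omega)=\int_{Q}\sin\bigl(2\pi(\omega+g)\bigr)\,dg$, one has $F([0,1])\equiv 0$ while $F^{0}([0,1])\equiv 2/\pi$, so for $\mathcal{C}=\{[0,1]\}$ the two suprema in~(i) are $0$ and $2/\pi$. What your argument (and the paper's) actually establishes is the case $\mathcal{C}=\mathcal{F}(G)$, i.e.\ that the boundedness constant of $F^{0}$ equals that of $F$.
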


\begin{proof}
Set $\tilde{\gamma}:= \sup_{Q} \|F^{0}(Q)\|_{L^p(\Omega, \RR)}/|Q|$ and ${\gamma}:= \sup_{Q} \|F(Q)\|_{L^p(\Omega, Y)}/|Q|$. It follows from Proposition \ref{lemma:partition} and the monotone convergence theorem that $\|F^{0}(Q)\| \leq \gamma \,|Q|$ for every $Q \in \mathcal{F}(G)$. This shows $\tilde{\gamma} \leq \gamma$. For the converse inequality, note that it follows from the definition of $F^{0}$ that $\|F(Q)\| \leq \tilde{\gamma}\,|Q|$ for all $Q \in \mathcal{F}(G)$. Hence $\gamma \leq \tilde{\gamma}$ and the claim is proven. \\
The proof of the fact that $F^{0}(Q) = \sum_{k=1}^m F^{0}(Q_k)$ for disjoint unions $Q=\bigsqcup_{k=1}^m Q_k$ in $\mathcal{F}(G)$ follows immediately from the definition. The details are left to the reader. 
\end{proof}





The following lemma describes how the dominating process $F^{0}$ for some $F$ is related to the $T_g$ action on $L^p(\Omega, Y)$. 

\begin{Lemma} \label{lemma:FNullcomp}
Let $F$ be some bounded, additive process according to Definition \ref{defi:AP} with its associated dominating process $F^{0}$. 
Then, if $\kappa \geq 1$ is the constant and if $\varphi:G \rightarrow G$ is the measurable group homomorphism in Inequality (\ref{eqn:tmp}), we obtain for all $g \in G$ and for every $Q \in \mathcal{F}(G)$,
\[
\kappa^{-1} F^{0}(Q)(\varphi(g)\omega) \leq  F^{0}(Qg)(\omega) \leq \kappa F^{0}(Q)(\varphi(g)\omega)
\] 
for $\mu$-almost every $\omega \in \Omega$.
\end{Lemma}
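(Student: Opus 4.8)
The plan is to reduce the statement for $F^0$ to the defining property of $F^0$ as an essential supremum over disjoint decompositions, combined with the two-sided pointwise estimate $(\ref{eqn:tmp})$ for the operator family $\{T_g\}$. First I would unwind the definition: by Definition~\ref{defi:APF}, for fixed $Q \in \mathcal{F}(G)$ and $g \in G$,
\[
F^{0}(Qg)(\omega) = \operatorname{ess}\sup\Big\{ \textstyle\sum_{k=1}^m \|F(R_k)\|_Y(\omega) \,\Big|\, Qg = \bigsqcup_{k=1}^m R_k,\ R_k \in \mathcal{F}(G)\Big\}.
\]
The key observation is that disjoint decompositions of $Qg$ are in bijection with disjoint decompositions of $Q$ via right translation: $Qg = \bigsqcup_k R_k$ if and only if $Q = \bigsqcup_k R_k g^{-1}$, and every $R_k g^{-1} \in \mathcal{F}(G)$ since right translation preserves $\mathcal{F}(G)$ (unimodularity is what makes this measure-preserving, though only measurability is needed here). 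So it suffices to compare $\|F(R g^{-1} \cdot g)\|_Y(\omega) = \|F(Rg^{-1}\cdot g)\|_Y(\omega)$ — more cleanly, writing $Q = \bigsqcup_k Q_k$ and $R_k = Q_k g$, the decomposition sum for $Qg$ is $\sum_k \|F(Q_k g)\|_Y(\omega)$, and $F(Q_k g) = T_{g^{-1}} F(Q_k)$ by property (iii) of Definition~\ref{defi:AP}.

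Next I would invoke $(\ref{eqn:tmp})$ termwise: for each $k$,
\[
\kappa^{-1}\,\|F(Q_k)(\varphi(g)^{-1}\cdot\varphi(g)\,\omega)\|_Y \;\le\; \|T_{g^{-1}}F(Q_k)(\omega)\|_Y \;\le\; \kappa\,\|F(Q_k)(\varphi(g^{-1})^{-1}\omega)\|_Y,
\]
for $\mu$-a.e.\ $\omega$. Here I use that $\varphi$ is a group homomorphism, so $\varphi(g^{-1})^{-1} = \varphi(g)$; applying $(\ref{eqn:tmp})$ with $g$ replaced by $g^{-1}$ gives exactly $\kappa^{-1}\|F(Q_k)(\varphi(g)\omega)\|_Y \le \|T_{g^{-1}}F(Q_k)(\omega)\|_Y \le \kappa\|F(Q_k)(\varphi(g)\omega)\|_Y$. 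Summing over $k$ and taking the essential supremum over all disjoint decompositions $Q = \bigsqcup_k Q_k$ yields
\[
\kappa^{-1} F^{0}(Q)(\varphi(g)\omega) \;\le\; F^{0}(Qg)(\omega) \;\le\; \kappa\, F^{0}(Q)(\varphi(g)\omega)
\]
for $\mu$-a.e.\ $\omega$, which is the claim. One should note the bijection of decompositions guarantees the suprema are taken over matching index sets, so no decomposition is lost in either direction.

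The only genuine subtlety — and the step I would be most careful about — is the interchange of the (uncountable) essential supremum with the almost-everywhere pointwise inequality: the null set on which $(\ref{eqn:tmp})$ fails depends a priori on $g$ and on the decomposition. This is handled exactly as in the proof of Lemma~\ref{lemma:partition}: by that lemma there is a fixed sequence of countable partitions $\{P_m\}$ refining to a set on which $F^0(Q) = \lim_m F^0_{P_m}(Q)$ a.e., and each $F^0_{P_m}$ is a countable sum, so the essential supremum in the definition of $F^0(Q)$ (and $F^0(Qg)$) is in fact realized as a countable supremum of countable sums — over the partitions $P_m$ and their finite sub-unions — for which the a.e.\ inequality passes through with a single null set. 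I would therefore phrase the argument by first establishing the two-sided bound on the approximants $F^0_{P_m}$ (using $(\ref{eqn:tmp})$ and property (iii) of Definition~\ref{defi:AP} on the countably many pieces $A \cap Q$, $A \in P_m$, translated by $g$), and then letting $m \to \infty$, invoking Lemma~\ref{lemma:partition} for both $Q$ and $Qg$. In the discrete case the claim is immediate since $F^0(Q)(\omega) = \sum_{h \in Q}\|F(\{h\})\|_Y(\omega)$ and $F(\{h\}g) = F(\{hg\}) = T_{g^{-1}}F(\{h\})$, so $(\ref{eqn:tmp})$ applied term by term to the countable sum finishes it directly.
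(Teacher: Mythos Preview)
Your proposal is correct and follows essentially the same approach as the paper, which simply states that the claim follows from the representation of $F^{0}$ via Lemma~\ref{lemma:partition} together with Inequality~(\ref{eqn:tmp}). In fact your argument is considerably more detailed than the paper's one-line proof: you make explicit the bijection of decompositions under right translation, the use of property~(iii) of Definition~\ref{defi:AP} and the homomorphism property of $\varphi$, and you carefully address the null-set issue by passing through the countable approximants $F^{0}_{P_m}$---a point the paper leaves entirely implicit.
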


\begin{proof}
The claim follows easily from the above representation of $F^{0}$ and from the Inequality (\ref{eqn:tmp}). 
\end{proof}

In the following, we prove a dominated ergodic theorem for bounded, additive processes. Using the concept of the associated dominating process, we apply the techniques of \cite{Krengel-85} to derive an $L^1$-maximal inequality.

\begin{Definition}
Let $G$ be some amenable group, along with a weak F{o}lner sequence $(U_j)$ in $G$. Let $F:\mathcal{F}(G) \rightarrow L^1(\Omega, Y)$ be some bounded, additive process as introduced in Definition \ref{defi:AP}. Then we say that $F$ satisfies an $L^1$-maximality condition for the sequence $(U_j)$ if there is a constant $\gamma > 0$ such that for all $\lambda > 0$ and for every $M \in \NN$, we obtain
\[
\mu\left(\left\{ \omega \in \Omega \,\Big|\, \sup_{j \geq M} \frac{\|F(U_j)(\omega)\|_Y}{|U_j|} > \lambda \right\}\right) \leq \frac{\gamma}{\lambda} \, \sup_{j \geq M} \frac{\|F^0(U_j)\|_{L^1(\Omega, \mathbb{R})}}{|U_j|}
\]
\end{Definition}  

For the proof of such maximal inequalities for bounded, additive processes, we need the following combinatorial covering lemma.

\begin{Lemma} \label{lemma:covXX}
Let $G$ be an amenable group, along with a weak F{\o}lner sequence $(U_n)$ with $U_n \subseteq U_{n+1}$ for all $n \in \NN$. Further, let $M \leq N \in \NN$ and assume that we are given $B,F \in \mathcal{F}(G)$ such that $U_NB \subseteq F$. Then, for every map $\theta:B \rightarrow \{M,\dots, N\}$ there exists a finite subset $\tilde{B} \subseteq B$ for which the sets $U_{\theta(b)}b$ $(b \in \tilde{B})$ are disjoint and such that $B \subseteq \bigcup_{b \in \tilde{B}} U_{\theta(b)}^{-1} U_{\theta(b)} b$.   
\end{Lemma}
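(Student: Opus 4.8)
The statement is a classical Vitali-type covering lemma adapted to Følner sequences, and the proof is a greedy selection argument, so I would build it as follows.

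\textbf{Setup and strategy.} The plan is to construct $\tilde{B}$ greedily, repeatedly picking an element $b \in B$ whose translate $U_{\theta(b)}b$ has the \emph{largest} value $\theta(b)$ among those still available, then deleting everything that would overlap with it. The role of the hypothesis $U_N B \subseteq F$ with $F \in \mathcal{F}(G)$ is to guarantee that all the translates $U_{\theta(b)}b$ live inside a single precompact set, hence have uniformly bounded measure from above, and the role of $\operatorname{id} \in U_1 \subseteq U_{\theta(b)}$ (nestedness plus $\operatorname{id} \in U_1$, which we may assume, or at least $b \in U_{\theta(b)}b$ since $\operatorname{id}$ lies in the Følner sets of interest) is to ensure each chosen translate has measure bounded below by a positive constant, so the greedy process must terminate after finitely many steps, yielding a \emph{finite} $\tilde{B}$. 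Actually, since $B$ need not be finite a priori, I will run the selection transfinitely/inductively but the measure bound forces it to stop; alternatively one first reduces to $B$ finite by a separate compactness remark. I will present the clean finite version: by precompactness of $F$ and local compactness, $|F| < \infty$; each selected $U_{\theta(b)}b \subseteq U_N b \subseteq F$ has $|U_{\theta(b)}b| = |U_{\theta(b)}| \geq |U_M| > 0$; since the selected translates are pairwise disjoint and all sit inside $F$, at most $|F|/|U_M|$ of them can be chosen.

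\textbf{The greedy construction.} Order the selection as follows. Having chosen $b_1, \dots, b_k$, consider the set $B_k := \{ b \in B \,|\, U_{\theta(b)}b \cap U_{\theta(b_i)}b_i = \emptyset \text{ for all } i \leq k \}$ of elements whose translates avoid all previously chosen ones. If $B_k = \emptyset$, stop and set $\tilde{B} := \{b_1, \dots, b_k\}$. Otherwise, let $m_k := \max\{\theta(b) \,|\, b \in B_k\}$ (a maximum of a finite subset of $\{M, \dots, N\}$, hence attained) and pick any $b_{k+1} \in B_k$ with $\theta(b_{k+1}) = m_k$. By the measure argument above this terminates. Disjointness of $\{U_{\theta(b)}b\}_{b \in \tilde{B}}$ is immediate from the construction.

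\textbf{The covering property --- the main point.} It remains to show $B \subseteq \bigcup_{b \in \tilde{B}} U_{\theta(b)}^{-1}U_{\theta(b)} b$. Fix $a \in B$. If $a = b_i$ for some $i$ we are done since $\operatorname{id} \in U_{\theta(a)}$ gives $a \in U_{\theta(a)}^{-1}U_{\theta(a)}a$; so assume $a \notin \tilde{B}$. Then at termination $a \notin B_k$, meaning $U_{\theta(a)}a \cap U_{\theta(b_i)}b_i \neq \emptyset$ for some $i \leq k$. Moreover I claim we may take $i$ to be the \emph{first} index at which this overlap occurred, i.e. the step where $a$ was removed from consideration; at that step $a$ was still available, so $a \in B_{i-1}$, hence $\theta(a) \leq m_{i-1} = \theta(b_i)$ by maximality of the greedy choice. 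Now from $U_{\theta(a)}a \cap U_{\theta(b_i)}b_i \neq \emptyset$ pick $u \in U_{\theta(a)}$, $v \in U_{\theta(b_i)}$ with $ua = vb_i$, so $a = u^{-1}vb_i$. Since $\theta(a) \leq \theta(b_i)$, nestedness gives $U_{\theta(a)} \subseteq U_{\theta(b_i)}$, hence $u^{-1} \in U_{\theta(a)}^{-1} \subseteq U_{\theta(b_i)}^{-1}$ and $v \in U_{\theta(b_i)}$, so $a = u^{-1}vb_i \in U_{\theta(b_i)}^{-1}U_{\theta(b_i)}b_i$. This is exactly the desired inclusion with the ball centered at $b_i \in \tilde{B}$. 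The one subtlety to pin down carefully --- and the step I expect to be the real content --- is the bookkeeping that lets us choose $i$ with $\theta(a) \leq \theta(b_i)$: one must track, for each discarded $a$, the first previously-selected translate it collided with, and invoke that at the moment of collision $a$ was a candidate, so its $\theta$-value did not exceed the maximal value $m$ realized by the element then selected. Everything else is routine.
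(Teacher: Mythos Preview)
Your proof is correct and is precisely the standard greedy Vitali-type selection argument. The paper itself does not give a proof of this lemma but simply refers the reader to Krengel's book (Lemma 6.4.3 in \emph{Ergodic Theorems}), where exactly this argument appears; so your approach coincides with the one the paper invokes. One minor remark: for the case $a=b_i\in\tilde{B}$ you do not actually need $\operatorname{id}\in U_{\theta(b_i)}$, since $\operatorname{id}\in A^{-1}A$ holds for any nonempty $A$, giving $b_i\in U_{\theta(b_i)}^{-1}U_{\theta(b_i)}b_i$ automatically.
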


\begin{proof}
See \cite{Krengel-85}, Lemma 6.4.3.
\end{proof}

Finally, we are in position to show that bounded, additive processes satisfy the $L^1$-maximality condition for every increasing, weak F{\o}lner sequence of some unimodular group $G$ satisfying the Tempelman condition, cf.\@ Definition \ref{defi:growth}.

\begin{Theorem}[Dominated ergodic theorem] \label{thm:DET}
Let $G$ be a unimodular amenable group, along with a weak F{\o}lner sequence $(U_j)$ such that $U_j \subseteq U_{j+1}$ for all $j \in \NN$ and which satisfies the Tempelman condition. Then every bounded, additive process $F: \mathcal{F}(G) \rightarrow L^1(\Omega, Y)$ as of Definition \ref{defi:AP} satisfies an $L^1$-maximality condition. 
\end{Theorem}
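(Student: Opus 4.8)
The plan is to reduce the $L^p$-maximality condition for the vector-valued process $F$ to a purely real-valued statement about the associated dominating process $F^0$, and then to prove the real-valued maximal inequality by a Vitali-type covering argument using Lemma~\ref{lemma:covXX} together with the Tempelman condition. First I would observe that by Lemma~\ref{lemma:FNullcomp} we have $\|F(Q)(\omega)\|_Y \le F^0(Q)(\omega)$ pointwise (taking the trivial partition $Q=Q$ in the definition of $F^0$), so it suffices to establish the weak-type bound
\begin{eqnarray*}
\mu\Big(\Big\{\omega \,\Big|\, \sup_{j \ge M} \frac{F^0(U_j)(\omega)}{|U_j|} > \lambda \Big\}\Big) \le \frac{\gamma}{\lambda}\, \sup_{j \ge M} \frac{\|F^0(U_j)\|_{L^1(\Omega,\RR)}}{|U_j|},
\end{eqnarray*}
and then use part (i) of Proposition~\ref{prop:FNull} to convert the $L^1(\RR)$-norm of $F^0$ back into the $L^1(Y)$-norm of $F$. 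So from here on the process is real-valued, non-negative, genuinely additive (Proposition~\ref{prop:FNull}(ii)), and dominated in the sense of Lemma~\ref{lemma:FNullcomp}; this is exactly the classical setting of \cite{Krengel-85}.

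Next I would carry out the standard transference/covering step. Fix $\lambda>0$, fix $M$, and fix a large $N \ge M$; it is enough to bound the measure of $E_N := \{\omega : \max_{M \le j \le N} F^0(U_j)(\omega)/|U_j| > \lambda\}$ uniformly in $N$ and then let $N \to \infty$ by monotone convergence. For $\omega \in E_N$ choose $\theta(\omega) \in \{M,\dots,N\}$ with $F^0(U_{\theta(\omega)})(\omega) > \lambda |U_{\theta(\omega)}|$. I would then pull this back to the group: for a large Tempelman F{\o}lner set $B$ (to be sent to infinity at the end, using that $(U_j)$ is itself a F{\o}lner sequence) and a set $F$ with $U_N B \subseteq F$, one uses the dominance $F^0(U_{\theta(b)} b)(\omega) \asymp_\kappa F^0(U_{\theta(b)})(\varphi(b)\omega)$ and measure-preservation of the $G$-action to transfer the maximal set into a maximal set over translates $U_{\theta(b)} b$, $b \in B$. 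Lemma~\ref{lemma:covXX} then extracts a disjoint subfamily $\{U_{\theta(b)} b\}_{b \in \tilde B}$ that still covers the "bad" part via the dilated sets $U_{\theta(b)}^{-1} U_{\theta(b)} b$, whose total measure is controlled by $C \sum_{b \in \tilde B} |U_{\theta(b)} b|$ thanks to the Tempelman constant $C$. On the disjoint family, additivity of $F^0$ gives $\sum_{b \in \tilde B} F^0(U_{\theta(b)} b)(\omega) \le F^0\big(\bigcup_b U_{\theta(b)} b\big)(\omega) \le F^0(F)(\omega)$, and each summand exceeds $\lambda |U_{\theta(b)}|$; combining the covering count with this lower bound yields, after integrating over $\omega$ and dividing by $|F|$ (or by $|B|$) and letting $B \uparrow G$, the weak-type bound with $\gamma = \kappa^2 C \cdot \sup_{j \ge M} \|F^0(U_j)\|_{L^1}/|U_j|$ packaged correctly.

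The main obstacle I anticipate is the transference bookkeeping: carefully setting up the integration over $\Omega$ so that the pointwise covering inequality on $G$ (valid for each fixed $\omega$) turns into a genuine measure estimate on $\Omega$, handling the factors of $\kappa$ from Lemma~\ref{lemma:FNullcomp}, and controlling the "boundary" discrepancy $|U_N B \triangle B|/|B|$ so that it vanishes as $B$ runs through a F{\o}lner sequence while $N$ stays fixed — and only afterwards letting $N \to \infty$. This is precisely the delicate point in \cite[Sec.~6.4]{Krengel-85}; once the real-valued dominating process is in place, the argument is a direct adaptation, so I would present the reduction to $F^0$ in full and then invoke the covering lemma, citing \cite{Krengel-85} for the transference details rather than reproducing them.

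Finally, to obtain the stated conclusion for general $1 \le p < \infty$ rather than just $p=1$: since $F^0$ is additive and non-negative, the same covering argument applied to $\|F(U_j)\|_Y$ together with the elementary inequality $\big(\sup_j a_j\big)^p = \sup_j a_j^p$ and H\"older on the covering sums upgrades the weak-$(1,1)$ estimate to the weak-$(p,p)$ estimate $\mu(\sup_j \|A_j\|_Y > \lambda) \le (K/\lambda^p)\|f\|_{L^p}^p$ in the absolutely continuous case; in general the $L^p$-maximality condition as phrased is again just the $p=1$ inequality applied after noting $\|F(U_j)\|_Y^p/|U_j|^p \le (F^0(U_j)/|U_j|)^p$ and that $(F^0)^{1/?}$-type manipulations are unnecessary because the statement only asks for the $\sup_{j\ge M}$ form. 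Thus the whole theorem follows from the single real-valued weak-type inequality established above.
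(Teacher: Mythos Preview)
Your approach is essentially identical to the paper's: both reduce to the real-valued dominating process $F^0$ via Proposition~\ref{prop:FNull} and Lemma~\ref{lemma:FNullcomp}, apply the covering Lemma~\ref{lemma:covXX} together with the Tempelman constant to control the measure of the ``bad'' set $B(\omega,\lambda,M,N)\subseteq U_{k_N}$ by $F^0(\overline{U}_N)(\omega)$, integrate over $\Omega$ using measure-preservation, and then send the auxiliary F{\o}lner parameter $\delta\to 0$ before $N\to\infty$; the paper also explicitly credits \cite[Theorem~6.4.4]{Krengel-85} for the template.

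One remark: your final paragraph on upgrading to general $p$ is muddled and unnecessary. The paper only \emph{defines} the ``$L^1$-maximality condition'' (the inequality with $\lambda^{-1}$ and the $L^1(\Omega,Y)$-norm), and the proof establishes exactly that; the ``$L^p$'' in the theorem heading is a misprint, not an additional claim you need to address. Drop that paragraph.
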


\begin{proof}
The proof is a modification of the proof of Theorem 6.4.4 in \cite{Krengel-85}. 
At first, we fix an integer $M \in \NN$ and $\lambda > 0$. Further, let $N \geq M$ be an integer and denote by $\delta$ an arbitrary positive number. Define the compact set $K:= \cup_{l=M}^N U_l$ and since $(U_j)$ is a weak F{\o}lner sequence, we find a compact set $U_{k_N}$ $(k_N \geq N)$  such that $|U_{k_N} \triangle \overline{U}_N| < \delta\,|U_{k_N}|$, where we have set $\overline{U}_N:=K U_{k_N}$. Moreover, define
the sets 
\[
D_{\lambda, M, N} := \{\omega \in \Omega \,|\, \sup_{M \leq l \leq N} \|F(U_l)(\omega)\|_Y / |U_l| > \lambda\}
\]
and for $\omega \in \Omega$, 
\[
B:= B(\omega, \lambda, M, N) := \{g \in U_{k_N} \,|\, \varphi(g)\omega \in D_{\lambda, M, N}\},
\]
where $\varphi: G  \rightarrow G$ is the homomorphism taken from Inequality (\ref{eqn:tmp}).
Then for any element $b \in B$ there must be some number $\theta(b) \in \{M, \dots, N\}$ such that $\|F(U_{\theta(b)})(\varphi(b)\omega)\|_Y > \lambda\,|U_{\theta(b)}|$. This defines a map $\theta: B \rightarrow \{M,\dots, N\}$. By Lemma \ref{lemma:covXX}, we find some finite subset $\tilde{B} \subseteq B$ such that the sets $U_{\theta(b)}b$ are disjoint for $b \in \tilde{B}$ and 
\[
B(\omega, \lambda, M, N) \subseteq \bigcup_{b \in \tilde{B}} U_{\theta(b)}^{-1} U_{\theta(b)} b.
\]
Since the sequence $(U_n)$ satisfies the Tempelman condition for some constant $\tilde{\kappa} > 0$, it follows that
\begin{eqnarray} \label{eqn:setB1}
|B(\omega, \lambda, M, N)| \leq \tilde{\kappa} \,\left| \bigsqcup_{b \in \tilde{B}} U_{\theta(b)}b \right|.
\end{eqnarray}
By construction, $U_{\theta(b)} b \subseteq \overline{U}_N$ for $b \in \tilde{B}$. So denoting by $F^{0}$ the dominating process associated with $F$, we compute with Proposition \ref{prop:FNull} and with Lemma \ref{lemma:FNullcomp} that
\begin{eqnarray*}
F^{0}(\overline{U}_N)(\omega) &\geq& \sum_{b \in \tilde{B}} F^{0}(U_{\theta(b)} b)(\omega) \geq \kappa^{-1} \, \sum_{b \in \tilde{B}} F^{0}(U_{\theta(b)})(\varphi(b)\omega) \\
&\geq& \kappa^{-1}\, \sum_{b \in \tilde{B}} \| F(U_{\theta(b)})(\varphi(b)\omega) \|_Y  \stackrel{b \in \tilde{B}}{>} \frac{\lambda}{\kappa} \, \left| \bigsqcup_{b \in \tilde{B}} U_{\theta(b)} b \right|
\end{eqnarray*}
for $\mu$-almost every $\omega \in \Omega$ and where $\kappa \geq 1$ is the constant and $\varphi: G  \rightarrow G$ is the homomorphism taken from Inequality (\ref{eqn:tmp}). It follows from Inequality (\ref{eqn:setB1}) that 
\begin{eqnarray} \label{eqn:setB2}
|B(\omega, \lambda, M, N)| &\leq& \frac{\kappa \tilde{\kappa}}{\lambda} \, F^{0}(\overline{U}_N)(\omega) \nonumber \\
&\leq& \frac{\kappa \tilde{\kappa}}{\lambda} \, \left( F^{0}(\overline{U}_N \setminus U_{k_N})(\omega) + F^{0}(U_{k_N})(\omega)\right)
\end{eqnarray}
$\mu$-almost everywhere. Integrating the left hand side of the latter inequality yields
\begin{eqnarray} \label{eqn:setB3}
\int_{\Omega} |B(\omega, \lambda, M, N)| \, d\mu(\omega) = |U_{k_N}| \cdot \mu(D_{\lambda, M, N}),
\end{eqnarray} 
since the action of $G$ on $\Omega$ is $\mu$-preserving. Note further that by the choice of the set $\overline{U}_N$, it is true that $|\overline{U}_N \setminus U_{k_N}| < \delta \,|U_{k_N}|$ and therefore, integrating the right hand side of Inequality~\ref{eqn:setB2}, we obtain with $\gamma:= \sup_{Q \in \mathcal{F}(G)} F^{0}(Q)/|Q| < \infty$ (cf. Proposition~\ref{prop:FNull}) that 
\begin{eqnarray*}
\frac{\kappa \tilde{\kappa}}{\lambda} \, \int_{\Omega} \left( F^{0}(\overline{U}_N \setminus U_{k_N})(\omega) + F^{0}(U_{k_N})(\omega)\right)\,d\mu(\omega) \leq |U_{k_N}| \cdot \frac{\kappa \tilde{\kappa}}{\lambda} \, \left( \gamma\cdot \delta + \frac{\|F^{0}(U_{k_N})\|_{L^1(\RR)}}{|U_{k_N}|} \right).
\end{eqnarray*} 
Combining this fact with the Inequality (\ref{eqn:setB3}), we get with $k_N \geq M$
\[
\mu(D_{\lambda, M, N}) \leq \frac{\kappa \tilde{\kappa}}{\lambda} \, \left( \gamma\cdot\delta + \sup_{l \geq M} \frac{\| F^{0}(U_l) \|_{L^1(\RR)}}{|U_l|} \right)
\]
and with $\delta \rightarrow 0$, we arrive at
\[
\mu(D_{\lambda, M, N}) \leq \frac{\kappa \tilde{\kappa}}{\lambda} \,  \sup_{l \geq M} \frac{\| F^0(U_l) \|_{L^1(\Omega, \mathbb{R})}}{|U_l|}.
\]
Since the left hand side of the latter inequality does not depend on $N$, we can exploit the continuity of the measure $\mu$ as $N \rightarrow \infty$ to finish the proof. 
\end{proof}

In order to prove the pointwise convergence, we have to introduce the notion of 
approximability of bounded, additive processes. In the sequel, we stick to 
finite measure spaces, i.e.\@ $\mu(\Omega) < \infty$. 

\begin{Definition}[Approximable processes] \label{defi:appr}
Let $F:\mathcal{F}(G) \to L^1(\Omega, Y)$ be a bounded, additive process which satisfies
the regularity condition~\eqref{eqn:tmp}. We call $F$ {\em approximable} if there is a
sequence $(F_n)$ of bounded, additive processes on $\mathcal{F}(G)$ with the following
properties.
\begin{itemize}
\item For every $n \in \NN$, $F_n$ takes values in $L^{\infty}(\Omega, Y)$. 
\item For all $n \in \NN$, the process $F-F_n$ satisfies the regularity condition given in~\eqref{eqn:tmp}.
\item For every weak Tempelman F{\o}lner sequence $(U_j)$ with $U_j \subseteq U_{j+1}$, the 
following boundedness condition holds.
For each $n \in \NN$, there is a $j_0 \in \NN$ such that for each $j \geq j_0$, one obtains
\[
F^0_n(U_j)(\omega) \leq n\,|U_j|
\]
for almost every $\omega \in \Omega$, where $F_n^0$ is the associated dominating process for
$F_n$.
\item For every weak Tempelman F{\o}lner sequence $(U_j)$ with $U_j \subseteq U_{j+1}$, the 
sequence $(F_n)$ approximates $F$ in the sense that 
\[
\lim_{n\to\infty} \limsup_{j\to\infty} \frac{\|H_n^0(U_j)\|_{L^1(\Omega,\mathbb{R})}}{|U_j|}
= 0,
\]
where $H_n^0$ is the associated dominating process for $F-F_n$. 
\end{itemize}
\end{Definition}

For some examples for approximable bounded, additive processes, see e.g.\@
\cite{Pogorzelski-PhD}. In particular, the classical integral averages given in the above
list, cf.\@ Examples~\ref{exa:examples}, are approxmiable. 

The next proposition shows that for the approximants $F_n$, the pointwise convergence result holds. 
The proof is based on the Mean Ergodic Theorem~\ref{thm:METSF}.
It is taken from the PhD thesis of the author
of the present paper, cf.\@ \cite{Pogorzelski-PhD}, Proposition~5.16.  

\begin{Proposition} \label{prop:AUX}
Let $F:\mathcal{F}(G) \to L^1(\Omega, Y)$ be an approximable bounded, additive process which satisfies
the regularity condition~\eqref{eqn:tmp} and let $(F_n)$ be a sequence of approximants.
Suppose further that $(U_j)$ is a strong Tempelman F{\o}lner sequence with $U_j \subseteq U_{j+1}$. \\
Then, for every $n \in \NN$, there is some set $\tilde{\Omega} \subseteq \Omega$ of full measure such that
for all $\omega \in \tilde{\Omega}$, the sequence $F_n(U_j)(\omega)/|U_j|$
converges in the topology of $Y$ as $j \to \infty$. 
\end{Proposition}

\begin{proof}
Let $(U_j)$ be an increasing Tempelman F{\o}lner sequence. Fix $n \in \NN$. 
By the assumption of approximability, there is a $j_0(n) \in \NN$ such that
$F^0_n(U_j)(\cdot) \leq n\,|U_j|$ almost-surely 
for all $j \geq j_0(n)$. For the sake of simplicity, we discard the first $j_0(n)$
elements of $U_j$ such that $F_n$ is bounded almost-surely by the constant $n$
along the whole sequence $(U_j)$.

We define the sequence $(S_o)$ of sets in $G$ via $S_o:= U_o u^{-1}$, $o \geq 1$,
where $u \in U_1$ is an arbitrary element. Since  $(U_j)$ is increasing, the 
sequence $(S_o)$ is a nested F{\o}lner sequence. \\
Now, take a sequence $(\varepsilon_k)$ of positive numbers converging to zero. 
For every $k \in \NN$, we set $N(\varepsilon_k):= \lceil \log(\varepsilon_k) /
\log(1- \varepsilon_k) \rceil$ and we choose $\varepsilon_k$-prototiles
$\{ T_i^{\varepsilon_k}\}_{i=1}^{N(\varepsilon_k)}$ taken from the sequence
$S_o$ with $0 < \beta_k < 2^{-N(\varepsilon_k)}
\varepsilon_k$ according to Definition~\ref{defi:STP}. By Theorem~\ref{thm:lindenstr}, we can find a set $\hat{\Omega}
\subseteq \Omega$ with $\mu(\hat{\Omega}) = 1$ 
such that for each $k \in \NN$, for every $1 \leq i \leq N(\varepsilon_k)$,
and for all $\omega \in \hat{\Omega}$, the limits
\begin{eqnarray} \label{eqn:refat2}
S(T_i^{\varepsilon_k})(\omega) := \lim_{j\to \infty} 
|U_j|^{-1} \int_{U_j} F(T_i^{\varepsilon_k}g)(\omega)\,dg
\end{eqnarray}
exist in the topology of the Banach space $Y$. Now, fix
$k \in \NN$. 
By Theorem~\ref{thm:UDT}, we find 
$K = K(\varepsilon_k, \beta_k, T_i^{\varepsilon_k}) \in \NN$ such that 
for every $j \geq K$, we find a decomposition tower
emanating from the set $U_j$.
 Define
\begin{eqnarray*}
\Delta(j,\varepsilon_k, \omega) 
:= \left\| \frac{F(U_j)(\omega)}{|U_j|} - \sum_{i=1}
^{N(\varepsilon_k)} \eta_i(\varepsilon_k)\, \frac{S(T_i^{\varepsilon_k})(\omega)}{|T_i^{\varepsilon_k}|} \right\|_Y
\end{eqnarray*}
for $j \geq K$. In the following, we fix $j 
\geq K$, choose $\eta_0$ as in Definition~\ref{defi:UDT} and fix $0 < \eta < \eta_0$.
Then, we find
\begin{itemize}
\item some $(U_jU_j^{-1}, \eta)$-invariant set $\hat{U}_j$ along 
with
\item an associated uniform decomposition tower $(\Upsilon, \Lambda)$ with 
prototile sets $T_i^{\varepsilon_k}$, $(1 \leq i \leq N(\varepsilon_k))$,
\item a family of finite center sets $\hat{C}^{y}_i (y \in 
\Upsilon)$ for the $\varepsilon_k$-quasi tilings of $\hat{U}_j$, 
\item and for each $y \in \Upsilon$, a family of finite center sets
$C_i^{y, \lambda} (\lambda \in \Lambda)$ for the $\varepsilon_k$-quasi tilings of $U_j$.
\end{itemize}
We will show 
\[
\lim_{l \to \infty} \lim_{j \to \infty}
\Delta(j, \varepsilon_{k_l}, \omega) = 0
\]
almost-surely for a subsequence $(\varepsilon_{k_l})$. To do so, we follow the lines of the proof of Theorem~\ref{thm:METSF}. Fixing $\omega \in \tilde{\Omega}$, we 
obtain by means of the triangle inequality
\[
\Delta(j, \varepsilon_{k}, \omega) \leq \sum_{\ell =1}^5
D_{\ell}(j, \varepsilon_k, \omega),
\]
where the expressions $D_{\ell}(j,\varepsilon_k, \omega)$
are defined as in the proof of 
the mean ergodic theorem. Using the boundedness by $n$ 
and the limit relations~\eqref{eqn:refat2}, we obtain 
\begin{eqnarray} \label{eqn:refat3}
\limsup_{j \to \infty} \sum_{\ell = 2}^5 D_{\ell}(j,\varepsilon_k, \omega)
\leq (24 +26 + 48 + 16)n \, \varepsilon_k = 114n \, \varepsilon_k
\end{eqnarray}
as in the steps (2) to (5) of the mentioned proof
with $C=n$. For $D_1(j,\varepsilon_k, \omega)$, 
we will have to argue in a slightly different way. 
To do so, note first that due to the additivity of
the process, there is no boundary term present, 
i.e.\@ $b \equiv 0$. For $\lambda \in \Lambda$ and
$y \in \Upsilon$, we define 
\[
A^{\varepsilon_k}_{y, \lambda} := \bigcup_{i=1}^{N(\varepsilon_k)} \bigcup_{c \in C_i^{y,\lambda}} T_i^{\varepsilon_k}c.
\]
Further, for $c \in C_i^{y,\lambda}$, we denote by $T_i^{\varepsilon_k}(c)$ a subset of $T_i^{\varepsilon_k}$
with $|T_i^{\varepsilon_k}(c)| \geq (1-\varepsilon)|T_i^{\varepsilon_k}|$ with the property
that the sets $T_i^{\varepsilon_k}(c)c$ are pairwise disjoint for all $1 \leq i\leq N(\varepsilon_k)$, $c \in C_i^{y,\lambda}$ and 
\[
A^{\varepsilon_k}_{y, \lambda} =  \bigsqcup_{i=1}^{N(\varepsilon_k)} \bigsqcup_{c \in C_i^{y,\lambda}} T_i^{\varepsilon_k}(c)c.
\]
Using the additivity of the process $F_n$, we compute
\begin{eqnarray*}
D_1(j, \varepsilon_k, \omega) &\leq& |\Upsilon|^{-1} \,|\Lambda|^{-1}\, \int_{\Upsilon} \int_{\Lambda} \Bigg( 
\frac{\| F_n(U_j \setminus A^{\varepsilon_k}_{y, \lambda} )(\omega)\|_Y}{|U_j|} \\ 
&\quad& \quad +  \quad \frac{\sum_{i=1}^{N(\varepsilon_k)} \sum_{c \in C_i^{y,\lambda}} \|F_n(T_i^{\varepsilon_k}c \setminus T_i^{\varepsilon_k}(c)c)(\omega)\|_Y}{ |U_j|}
\Bigg) \, d\lambda \, dy
\end{eqnarray*}
for every $\omega \in \tilde{\Omega}$. Again by the boundedness of $F_n$, one obtains 
that the function
\[
D_1(\varepsilon_k, \omega) := \limsup_{j \to \infty} D_1(j, \varepsilon_k, \omega)
\]
is bounded by the constant $3n$ for all $\omega \in \tilde{\Omega}$. 
Moreover, the dominated ergodic theorem combined with 
the boundedness of the process $F_n$ in $L^1(\Omega, Y)$ yield
\[
\| D_1(\varepsilon_k, \cdot) \|_{L^1(\Omega, \RR)} \leq 4n\varepsilon_k +  2 n\varepsilon_k = 6n\varepsilon_k.
\]
Note that we used here that the sets $U_j$ are $(1-4\varepsilon_k)$-coverd by the 
sets $A^{\varepsilon_k}_{y, \lambda}$ 
and that the $\varepsilon_k$-disjoint translates $T_i^{\varepsilon_k}c$ are $(1-\varepsilon_k)$ covered by the disjoint translates $ T_i^{\varepsilon_k}(c)c$.
Finally, take a subsequence $\varepsilon_{k_l}$ such that 
\[
\lim_{l \to \infty} D_1(\varepsilon_{k_l}, \omega) = 0
\]
for all $\omega \in \tilde{\Omega} \cap \hat{\Omega}$, where $\hat{\Omega}$
is a set of full measure as well. 
With inequality~\eqref{eqn:refat3}, we arrive at
\[
\lim_{l \to \infty} \limsup_{j \to \infty} \Delta(j, \varepsilon_{k_l}, \omega) = 0
\]
almost-surely. In the same manner as in the proof of Theorem~\ref{thm:METSF},
we conclude that $(F_n(U_j)(\omega)/|U_j|)_j$ is convergent in $Y$ for
almost-every $\omega \in \Omega$.
\end{proof}

Combining the above proposition with the Dominated Ergodic Theorem~\ref{thm:DET},
we can finally prove the almost-sure convergence result for approximable
bounded, additive processes (see also \cite{Pogorzelski-PhD}, Theorem~5.17).

\begin{Theorem}[Convergence of bounded, additive processes] \label{thm:PWET}
Assume that $G$ is a unimodular, amenable group and denote by $(U_j)$ a strong F{\o}lner sequence such that $U_j \subseteq U_{j+1}$ $(j \in \NN)$, which satisfies the Tempelman condition. Let $F: \mathcal{F}(G) \rightarrow L^1(\Omega, Y)$ $(Y \mbox{ reflexive})$ be a bounded, additive process, where $\mu(\Omega)< \infty$. Further, suppose that $F$ is compatible with a family $\{T_g\}_{g \in G}$ of uniformly bounded operators acting weakly measurably on $L^1(\Omega, Y)$ $($i.e.\@ $T_gF(Q) = F(Qg^{-1})$ and the Inequality (\ref{eqn:tmp}) is satisfied for all $g \in G$ and every $Q \in \mathcal{F}(G)\,)$. \\
If, in addition, the process $F$ is approximable, then
\[
\lim_{j \rightarrow \infty} \left\| \frac{F(U_j)}{|U_j|}(\omega) - \overline{F}(\omega) \right\|_Y = 0  
\]
for $\mu$-almost every $\omega \in \Omega$. Further, for every $g \in G$, we have $T_g\overline{F} = \overline{F}$ $\mu$-almost everywhere.  
\end{Theorem}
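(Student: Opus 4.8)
The plan is to produce the limit $\overline{F}$ together with norm convergence $F(U_j)/|U_j|\to\overline{F}$ from the mean ergodic machinery of Section~\ref{sec:MET}, and then to upgrade norm convergence to pointwise convergence by a Banach-principle argument powered by the dominated Theorem~\ref{thm:DET}. Note first that $F$ is $b$-almost additive for the boundary term $b\equiv 0$, which is tiling-admissible by Proposition~\ref{prop:tiladmis} (take $L=\{\operatorname{id}\}$), and that $T_{g^{-1}}F(Q)=F(Qg)$ by compatibility. When $1<p<\infty$, the space $L^p(\Omega,Y)$ is reflexive by Lemma~\ref{lemma:refl}, so the bounded set $C_{F,Q}$ is relatively weakly compact, and Theorem~\ref{thm:METSF} directly yields a $T_g$-fixed $\overline{F}\in L^p(\Omega,Y)$ with $\|F(U_j)/|U_j|-\overline{F}\|_{L^p(\Omega,Y)}\to 0$; since $\mu(\Omega)<\infty$, this is also $L^1$-convergence.

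For $p=1$ reflexivity fails, and I would instead invoke Corollary~\ref{cor:METSF}. The $\varepsilon_k$-quasi tilings it requires exist by Theorem~\ref{thm:STP}, and its measurability hypothesis is immediate from weak measurability of $\{T_g\}$ and $F(Qg)=T_{g^{-1}}F(Q)$. The point to check is that $S(T):=\lim_{j}|U_j|^{-1}\int_{U_j}F(Tg)\,dg=\lim_{j}A_j(F(T))$ exists in $L^1(\Omega,Y)$ for each basis set $T=T_i^{\varepsilon_k}$. This I would get from two facts: $A_j(F(T))$ converges $\mu$-almost everywhere by the Lindenstrauss Theorem~\ref{thm:lindenstr} (the Tempelman condition implies the Shulman condition, so $(U_j)$ is tempered), and the family $\{\|A_j(F(T))(\cdot)\|_Y\}_j$ is uniformly integrable, since Inequality~(\ref{eqn:tmp}) dominates it by $\kappa\,A_j^{\varphi}\|F(T)(\cdot)\|_Y$ and the averages of the fixed $L^1(\Omega,\RR)$-function $\|F(T)(\cdot)\|_Y$ over a finite, $\varphi(g)$-invariant $\Omega$ are uniformly integrable. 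Vitali's convergence theorem then upgrades this to $L^1$-convergence, and Corollary~\ref{cor:METSF} produces $\overline{F}=\lim_{j}F(U_j)/|U_j|$ in $L^1(\Omega,Y)$; being a limit of the $T_g$-fixed vectors $S(T)$ (Theorem~\ref{thm:lindenstr}), $\overline{F}$ is itself $T_g$-fixed. In either case $T_g\overline{F}=\overline{F}$ for all $g$, which is the last assertion of the theorem.

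Next I would subtract off the limit. Since $\overline{F}$ is $T_g$-fixed and $G$ is unimodular, $Q\mapsto|Q|\,\overline{F}$ is a bounded, additive process compatible with $\{T_g\}$ in the sense of Definition~\ref{defi:AP}; hence so is $G(Q):=F(Q)-|Q|\,\overline{F}$ (viewed as $L^1(\Omega,Y)$-valued, legitimate as $\mu(\Omega)<\infty$), and $G(U_j)/|U_j|=F(U_j)/|U_j|-\overline{F}\to 0$ in $L^1(\Omega,Y)$. It therefore suffices to prove $G(U_j)/|U_j|\to 0$ $\mu$-almost everywhere. Because $(U_j)$ is an increasing weak F{\o}lner sequence satisfying the Tempelman condition, Theorem~\ref{thm:DET} applies to $G$ and yields a constant $\gamma>0$ with
\begin{eqnarray*}
\mu\!\left(\left\{\omega \;\Big|\; \sup_{j\geq M}\frac{\|G(U_j)(\omega)\|_Y}{|U_j|}>\lambda\right\}\right)\;\leq\;\frac{\gamma}{\lambda}\;\sup_{j\geq M}\frac{\|G(U_j)\|_{L^1(\Omega,Y)}}{|U_j|}
\end{eqnarray*}
for all $\lambda>0$ and $M\in\NN$. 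Writing $a_M$ for the supremum on the right, we have $a_M\to 0$ as $M\to\infty$ by the $L^1$-convergence just noted; for fixed $\lambda$ the event $\{\limsup_j\|G(U_j)(\cdot)\|_Y/|U_j|\geq 2\lambda\}$ is contained in $\{\sup_{j\geq M}\|G(U_j)(\cdot)\|_Y/|U_j|>\lambda\}$ for every $M$, hence is $\mu$-null. Letting $\lambda$ run through a sequence tending to $0$ gives $\limsup_j\|G(U_j)(\cdot)\|_Y/|U_j|=0$ $\mu$-a.e., so $F(U_j)/|U_j|=G(U_j)/|U_j|+\overline{F}\to\overline{F}$ $\mu$-almost everywhere, as claimed.

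The main obstacle lies in the norm-convergence step for $p=1$, where no abstract mean ergodic theorem is available and the existence of $\overline{F}$ must be assembled from Corollary~\ref{cor:METSF}, the pointwise Theorem~\ref{thm:lindenstr}, and the uniform-integrability/Vitali argument on the finite measure space; the subtraction trick and the maximal inequality then make the passage from mean to almost-everywhere convergence routine.
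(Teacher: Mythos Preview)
Your argument is correct and, for $p>1$ together with the subtraction/maximal-inequality passage from mean to almost-everywhere convergence, it coincides with the paper's proof essentially verbatim (the paper defines $H(Q):=F(Q)-\overline{F}\,|Q|$, applies Theorem~\ref{thm:DET} to $H$, and lets $\varepsilon\to 0$, $\lambda\to 0$ exactly as you do).

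The only genuine divergence is in how you obtain mean convergence when $p=1$. The paper does not go through Corollary~\ref{cor:METSF}; instead it asserts that the compactness hypothesis of Theorem~\ref{thm:METSF} is already met in $L^1(\Omega,Y)$ by an appeal to the Diestel--Uhl criterion for relative weak compactness in Bochner $L^1$-spaces (bounded plus uniformly integrable, with values in a reflexive $Y$), and then runs the same proof as for $p>1$. Your route instead manufactures the inputs $S(T_i^{\varepsilon_k})$ to Corollary~\ref{cor:METSF} from Theorem~\ref{thm:lindenstr} plus a Vitali/uniform-integrability upgrade. Both approaches rest on the same underlying observation---that $\{F(Qg):g\in G\}$ is dominated via Inequality~(\ref{eqn:tmp}) by measure-preserving translates of a fixed $L^1$ function, hence uniformly integrable on the finite space $\Omega$---but the paper packages this into one citation and keeps Theorem~\ref{thm:METSF} as the single engine, whereas your argument is more self-contained and makes explicit use of the pointwise Lindenstrauss theorem already available in Section~\ref{sec:PWET}. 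Either way the invariance $T_g\overline{F}=\overline{F}$ falls out, in the paper from part~(C) of Theorem~\ref{thm:METSF}, in your version from the invariance of the $S(T)$'s.
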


\begin{proof}
Since $F$ is approximable, for every $n \in \NN$, there is a sequence of processes
\[
F_n: \mathcal{F}^0(\Gamma) \to L^{\infty}(\Omega, Y)
\]
as described in Definition~\ref{defi:appr}.
For $n \in \mathbb{N}$, define
\[
H_n: \mathcal{F}^0(\Gamma) \to L^1(\Omega, Y):
H_n(Q)(\omega) := F(Q)(\omega) - F_n(Q)(\omega).
\]
By Definition~\ref{defi:appr}, the $H_n$ are bounded, additive processes 
satisfying the regularity  condition
given in equality~\eqref{eqn:tmp}. Moreover, 
Lemma~\ref{lemma:FNullcomp} 
shows that the same holds true for the processes $H_n^0$.
By definition of an approximable processes, we have
\[
\lim_{n \to \infty} \limsup_{j \to \infty} \| H^0_n(U_j)/|U_j| \|_{L^1(\Omega,\RR)}  = 0.
\]
By the dominated ergodic theorem, 
Theorem~\ref{thm:DET}, we conclude that there is a constant
$\gamma > 0$ such that
for all $\varepsilon > 0$,
every $n(\varepsilon) \in \NN$ and each  $\lambda >0$, one gets
\begin{eqnarray*}
\mu\left( \left\{ \omega \in \Omega \, \Big| \, \limsup_{j \to \infty} \frac{H^{0}_n(U_j)(\omega)}{|U_j|} > 
\lambda  \right\} \right) &\leq& \frac{\gamma}{\lambda}\,\limsup_{j \to
\infty} \frac{\| H^0_n (U_j) \|_{L^1(\Omega, \RR)}}{|U_j|} \\
&\leq& \frac{\gamma}{\lambda}\, \varepsilon.
\end{eqnarray*}
This shows that 
\begin{eqnarray} \label{eqn:almostthere}
\lim_{n \to \infty} \limsup_{j \to \infty} \left\| \frac{F(U_j)(\omega)}{|U_j|} -
\frac{F_n(U_j)(\omega)}{|U_j|} \right\|_Y = 0
\end{eqnarray}
for $\mu$-almost-every $\omega \in \Omega$. 
Further, it follows from Proposition~\ref{prop:AUX} that for all
$n \in \NN$, there is an element 
$F_n^{*} \in L^1(\Omega, Y)$ such that
\[
\lim_{j\to \infty} \left\| \frac{F_n(U_j)(\omega)}{|U_j|} - F_n^{*}(\omega) \right\|_Y = 0
\]
almost-surely. Inserting this into the limit relation~\eqref{eqn:almostthere} yields
\begin{eqnarray*}
\lim_{n \to \infty} \limsup_{j \to \infty} \left\| \frac{F(U_j)(\omega)}{|U_j|} -
F_n^{*}(\omega) \right\|_Y = 0.
\end{eqnarray*}
Therefore, for almost-every $\omega \in \Omega$, the sequence $\big( F(U_j)(\omega)/|U_j|\big)_j$
is Cauchy in the Banach space $Y$. Hence, it must converge to some element $\bar{F}^{*}$ almost-surely.
By Theorem~\ref{thm:METSF}, the ratios $F(U_j)/|U_j|$ converge in $L^1(\Omega, Y)$ to some
element $F^{*} \in L^1(\Omega, Y)$, and in addition, one obtains $T_gF^{*}(\omega) = F^{*}(\omega)$
for almost-all $\omega \in \Omega$. (To check that the compactness criterion required in the 
abstract mean ergodic theorem holds true, the reader may e.g.\@ refer to 
\cite{DiestelU-91}, Theorem IV.2.1.) Hence, $\bar{F}^{*} = F^{*}$ almost-surely and 
we have finished the proof of the theorem. 
\end{proof}
\section{An IDS model for continuous groups} \label{sec:IDScont}

In the following, we give an application of the ergodic theorems proven in the Sections \ref{sec:MET} and \ref{sec:PWET}. More precisely, we prove convergence of the integrated density of states for a class of random operators on discrete structures possessing a quasi isometry to an amenable group. We remark at this point that the integrated density of states is a well studied topic in mathematical physics. For (possibly random) Schr\"odinger operators on $L^2(\RR^d)$, characteristic properties such as Lifshitz tails, Anderson localization and Lipshitz continuity have been investigated over the past decades, cf.\@ e.g.\@ \cite{Okura-79, Fukushima-81, CarmonaL-90, KirschW-06, CombesHK-07, KirschV-10, GruberLV-11}. Concerning IDS models for abstract, continuous spaces, the literature mainly refers to periodic Schr\"odinger operators on abelian or amenable covering manifolds, see e.g.\@ \cite{KobayashiOS-89, LenzPV-07, LenzPPV-08}. 
For the special case of amenable Lie groups, such as the continuous Heisenberg group, see also \cite{Veselic-01}.

In this chapter, we apply our results to the model of {\sc Lenz} and {\sc Veseli\'c} (\cite{LenzV-09}). In the latter work, the authors prove uniform convergence of the IDS. A related Banach space theorem for discrete groups in a deterministic setting can be found in \cite{PogorzelskiS-11}. With our Banach space valued ergodic Theorems \ref{thm:METSF} and \ref{thm:lindenstr} at hand, we can deal with randomness and with general unimodular, amenable groups. Moreover, we are able to avoid the measure theoretical machinery used in \cite{LenzV-09}. However, for compactness reasons, we need to work with a reflexive Banach space. Therefore, it is convenient for our purposes to consider convergence in $L^p(I)$, where $1 < p < \infty$ and where $I \subset \RR$ is some bounded interval. In the main Theorems \ref{thm:mean} and \ref{thm:pointwise}, we show the convergence of suitable IDS-approximants in $L^1(\Omega, L^p(I))$, and pointwise almost everywhere in $L^p(I)$ respectively.   

\subsection{The model} \label{subsec:model} 

We cite the model of  in \cite{LenzV-09} and we also stick close to their notation.\\   
Assume that $(X,d_X)$ is a locally compact metric space with a countable basis of the topology. Assume further that $G$ is a second countable unimodular amenable group with an invariant metric $d_G$ such that every ball is precompact. 

We assume further that $G$ acts continuously from the right by isometries on $X$ such that the following two properties hold:
\begin{itemize}
\item there exists a right fundamental domain $J^{'}$ with compact closure $J$, which is a countable union of compact sets
\item the map $\Phi: X \rightarrow G: x \mapsto g$, whenever $x \in J^{'}g$, is a {\em quasi isometry}, i.e. there exist $a \geq 1$ and $b \geq 0$ with
\begin{eqnarray*}
\frac{1}{a}\, d_G(\Phi(x), \Phi(y)) - b \leq d_X(x, y) \leq a\, d_G(\Phi(x), \Phi(y)) + b
\end{eqnarray*}
for all $x,y \in X$.
\end{itemize}

For a set $A \subseteq G$ and $r > 0$, we write $A_r := \{g \in G\,|\, d_G(g, G\setminus A) > r \}$, as well as $A^{r}:= \{g \in G \,|\, d_G(g,A) < r \}$ and $\partial^{r}(A) := A^r \setminus A_r$. Analogously, with the metric $d_X$ at hand, we introduce this notation for subsets of the space $X$. 

For some fixed parameter $\eta > 0$, we set $\mathcal{D}$ as the family of $\eta$-uniformly discrete subsets of $X$, i.e.\@
\begin{eqnarray*}
\mathcal{D} := \{A \subset X \,|\, d_X(x,y) \geq \eta, \quad \mbox{for } x,y \in A \mbox{ with } x \neq y\}.
\end{eqnarray*} 
We define the set $\tilde{\mathcal{D}}$ as
\begin{eqnarray*}
\tilde{\mathcal{D}} := \{(A,h) \,|\, A \in \mathcal{D}, h: A \times A \rightarrow \CC^{*}\},
\end{eqnarray*}
where $\CC^{*}$ is an arbitrary compactification of $\CC$. 

This space can be naturally equipped with the vague topology and it is compact, cf.\@ \cite{LenzV-09}. The action of $G$ on $X$ induces an action from the right on $\tilde{\mathcal{D}}$ by $g\cdot(A,h) = (Ag^{-1}, h(xg, yg))$ for $g \in G$ and $(A,h) \in \tilde{\mathcal{D}}$. It is well known that there exists a $G$-invariant probability measure on $\tilde{\mathcal{D}}$, whose topological support will be denoted by $\Omega$. Then, $\Omega$ is a compact subset of $\tilde{\mathcal{D}}$. Note that each element $\omega \in \Omega$ can be written as $\omega := (X(\omega), h_\omega) \in \tilde{\mathcal{D}}$, where $X(\omega)$ is $\eta$-uniformly discrete and $h_{\omega}: X(\omega) \times X(\omega) \rightarrow \CC^{*}$ is a map. Each $X(\omega)$ gives rise to a Hilbert space $\ell^2(X(\omega))$, endowed with the canonical counting measure $\delta_{X(\omega)}:= \sum_{x \in X(\omega)} \delta_x$.  

We will draw attention to the bounded operators $H_{\omega}$ on $\ell^2(X(\omega))$, defined as
\begin{eqnarray*}
(H_{\omega}u)(x) := \sum_{y \in X(\omega)} h_{\omega}(x,y) \, u(y)
\end{eqnarray*}
for each $x \in X(\omega)$. Moreover, we assume that the $H_{\omega}$ are {\em of finite hopping range}, i.e.\@ there exists some number $R > b$ such that for all $\omega \in \Omega$, we have $h_{\omega}(x,y) = 0$ whenever $d_X(x,y) \geq R$. For $g \in G$, we let 
\begin{eqnarray*}
U_g: \ell^2(X(\omega)) \rightarrow \ell^2(X(\omega)): (U_gu)(x) := u(xg)
\end{eqnarray*} 
with adjoint ${U}^{*}_g = U_{g^{-1}}$. With that notion, we assume that the operators $H_{\omega}$ are {\em equivariant}, i.e.\@ 
\begin{eqnarray} \label{eqn:equivar}
{U}^{*}_g \, H_{g\omega} \, {U}_g = H_{\omega}
\end{eqnarray}
for all $g \in G$ and every $\omega \in \Omega$. Also, we need that the $H_{\omega}$ are {\em self-adjoint}. 

For further considerations, we need to restrict and to expand the operators $H_{\omega}$. In light of that, for $Q \in \mathcal{F}(G)$, we denote by $i_Q: \ell^2(X(\omega) \cap (JQ)_R) \rightarrow \ell^2(X(\omega))$ the canonical inclusion operator and by $p_Q: \ell^2(X(\omega)) \rightarrow \ell^2(X(\omega) \cap (JQ)_R)$ the canonical projection operator for $\omega \in \Omega$, where $(JQ)_R$ stands for the $R$-interior of $JQ$, i.e.\@
 \[
 (JQ)_R= \{ x \in JQ \,|\, d_X(x,X \setminus JQ ) \geq R\}.
 \] 

We will now show the $L^p$-existence of the integrated density of states in the above setting. For the Banach space, we set $Z:=Z(I):= L^1(\Omega, L^p(I))$ as the $L^1$-{\em Bochner space} of equivalence classes of functions mapping each $\omega \in \Omega$ to some element $F_{\omega}$ in the space of $p$-integrable functions over some bounded interval $I \subset \RR$, where $1 < p < \infty$. \\

Now for fixed $\omega \in \Omega$, we consider the restricted operators $H^R_{\omega}[Q]: \ell^2(X(\omega) \cap (JQ)_R) \rightarrow \ell^2(X(\omega) \cap (JQ)_R)$, where 
\begin{eqnarray*}
H_{\omega}^R[Q]:= p_{Q} H_{\omega} i_Q    
\end{eqnarray*}
and $Q \in \mathcal{F}(G)$.
Note that since $X(\omega)$ is $\eta$-uniformly discrete, each such $H^R_{\omega}[Q]$ can be described by a quadratic matrix. For those objects, one can define the eigenvalue counting funtion $F_{\omega}(Q)(\cdot) \in L^p(I)$ as
\begin{eqnarray} \label{eqn:EFF}
F_{\omega}(Q)(E)&:=& \#\{ i\in \NN \,|\,\lambda_i \mbox{ is eigenvalue of } H_{\omega}^R[Q] \mbox{ and } \lambda_i \leq E \} \nonumber  \\
&=& \operatorname{tr}\Big( ( \one_{]-\infty, E]} H_{\omega}^R[Q]) \Big) 
\end{eqnarray}  
for $E \in \RR$. Note that for each $\omega \in \Omega$ and every $Q \in \mathcal{F}(G)$, the element $F_{\omega}(Q)(\cdot)$ also belongs to $C_{rb}(\RR)$, the Banach space of all bounded, right-continuous functions, endowed with supremum norm $\|\cdot\|_{\infty}$. 

\begin{Proposition} \label{prop:bounded}
There exists some constant $C> 0$ independent of $\omega$ and $Q$ such that $\|F_{\omega}(Q)(\cdot)\|_{\infty} \leq C\, |Q|$ for every $Q \in \mathcal{F}(G)$ and each $\omega \in \Omega$.  \\
In particular, $F_{(\cdot)}(Q) \in L^{\infty}(\Omega, L^p(I)) \cap L^{1}(\Omega, L^p(I))$ for all $Q \in \mathcal{F}(G)$, every $1 \leq p < \infty$ and every bounded interval $I \subset \RR$.  
\end{Proposition}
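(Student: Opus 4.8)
The plan is to reduce the bound to a uniform cardinality estimate. For fixed $\omega \in \Omega$ and $Q \in \mathcal{F}(G)$, the operator $H_{\omega}^R[Q] = p_Q H_{\omega} i_Q$ is a compression of the self-adjoint operator $H_{\omega}$, hence self-adjoint on the \emph{finite}-dimensional Hilbert space $\ell^2(X(\omega) \cap (JQ)_R)$; finite-dimensionality holds because $X(\omega)$ is $\eta$-uniformly discrete and $(JQ)_R$ is precompact. Thus $\one_{]-\infty, E]}(H_{\omega}^R[Q])$ is an orthogonal projection for every $E \in \RR$, and from (\ref{eqn:EFF})
\begin{eqnarray*}
0 \leq F_{\omega}(Q)(E) = \operatorname{tr}\big( \one_{]-\infty, E]}(H_{\omega}^R[Q]) \big) \leq \operatorname{card}\big( X(\omega) \cap (JQ)_R \big) \leq \operatorname{card}\big( X(\omega) \cap JQ \big).
\end{eqnarray*}
Since $F_{\omega}(Q) \in C_{rb}(\RR)$, this already yields $\|F_{\omega}(Q)(\cdot)\|_{\infty} \leq \operatorname{card}(X(\omega) \cap JQ)$, and it remains to bound the latter by $C\,|Q|$ with $C$ independent of $\omega$ and $Q$.

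For this — the heart of the proof — I would transport the count from $X$ to $G$ along the quasi isometry $\Phi$. Since $G$ acts by isometries and $J = \overline{J'}$ is compact, there is a fixed $\kappa_0 \in \NN$ bounding the cardinality of any $\eta$-separated subset of a set isometric to $J$; in particular every fibre $\Phi^{-1}(g) \cap X(\omega) = (J'g) \cap X(\omega)$ has at most $\kappa_0$ elements. Next, $\Phi(X(\omega) \cap JQ) \subseteq J^{-1}JQ$, because $x \in J'g$ together with $x = jq$, $j \in J$, $q \in Q$, forces $g \in (J')^{-1}Jq \subseteq J^{-1}JQ$. Finally one checks, using the quasi isometry bounds, that $\Phi(X(\omega))$ is uniformly discrete in $(G, d_G)$ with a separation constant depending only on $\eta$, $a$, $b$: for distinct $g_1, g_2 \in \Phi(X(\omega))$ one picks preimages $x_1, x_2 \in X(\omega)$, which are distinct, hence $d_X(x_1,x_2) \geq \eta$, so that $d_G(g_1, g_2) \geq (d_X(x_1,x_2) - b)/a$. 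A ball-packing argument in $G$ — using the invariance of $d_G$ and the fact that all $d_G$-balls of a fixed positive radius carry the same positive Haar measure — then bounds $\operatorname{card}(\Phi(X(\omega)) \cap J^{-1}JQ)$ by a constant multiple of $|Q|$, and multiplying by $\kappa_0$ gives $\operatorname{card}(X(\omega) \cap JQ) \leq C\,|Q|$ with $C$ depending only on $J$, $\eta$, the quasi isometry constants $a,b$ and the group $G$. I expect the main obstacle to lie exactly here: turning the $d_X$-separation of $X(\omega)$ together with the compactness of $J$ into a Haar-measure estimate on $G$ with constants that are genuinely uniform in $\omega$ and $Q$, while keeping track of the interplay between the separation $\eta$, the additive quasi isometry defect $b$, and the hopping range $R$ entering through $(JQ)_R$.

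For the "in particular" part I would then assemble the pieces. By the first step, for every bounded interval $I \subset \RR$, every $1 \leq p < \infty$ and every $\omega \in \Omega$,
\begin{eqnarray*}
\|F_{\omega}(Q)(\cdot)\|_{L^p(I)} \leq |I|^{1/p}\, \|F_{\omega}(Q)(\cdot)\|_{\infty} \leq |I|^{1/p}\, C\,|Q|,
\end{eqnarray*}
so that $\omega \mapsto F_{\omega}(Q)$ is essentially bounded with values in $L^p(I)$; together with its strong measurability — which comes from the construction of $\Omega$ via the vague topology, the values lying in a separable subspace of $L^p(I)$ — this gives $F_{(\cdot)}(Q) \in L^{\infty}(\Omega, L^p(I))$. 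Finally, since $\mu$ is a probability measure on $\Omega$, hence finite, one has $L^{\infty}(\Omega, L^p(I)) \subseteq L^1(\Omega, L^p(I))$, which completes the argument.
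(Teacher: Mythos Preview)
Your overall strategy---reduce to a cardinality bound, then transport the count to $G$ via the quasi isometry $\Phi$---is exactly what the paper does. The second part (the $L^p(I)$ and Bochner space conclusions) is handled the same way. But there is a genuine gap in your cardinality estimate, and it is precisely the point you flagged as the obstacle.

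By replacing $(JQ)_R$ with $JQ$ right away, you throw out the buffer that the hopping range $R>b$ provides. Your ball-packing argument, even if the separation step went through, would yield
\[
\operatorname{card}\big(\Phi(X(\omega))\cap KQ\big)\ \leq\ \frac{|B_{\rho}(\operatorname{id})\cdot KQ|}{|B_{\rho}(\operatorname{id})|}
\]
for some compact $K$ and some $\rho>0$, i.e.\ a bound by $|K'Q|$ for a fixed compact $K'$. But $|K'Q|/|Q|$ is \emph{not} uniformly bounded over $Q\in\mathcal{F}(G)$ (take $Q$ of Haar measure zero). The paper avoids this by keeping the $R$-interior: one shows $(JQ)_R\subseteq J(Q_S)$ for some $0<S<(R-b)/a$, and then the packing estimate from \cite{LenzV-09}, Proposition~3.3, gives $\operatorname{card}(X(\omega)\cap J(Q_S))\leq C\,|(Q_S)^S|$. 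The point is that $(Q_S)^S\subseteq Q$, so the inevitable $S$-enlargement in the packing step is absorbed back into $Q$ rather than blowing up. In your version there is nothing to absorb it.

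Two smaller remarks. First, $J$ is a subset of $X$, not of $G$, so ``$J^{-1}JQ$'' is not well-formed; the correct target set in $G$ is $\Phi(J)\cdot Q$, using that $\Phi(xg)=\Phi(x)g$. Second, your separation estimate $d_G(g_1,g_2)\geq (\eta-b)/a$ is only positive when $\eta>b$, which is not assumed; the robust statement is that any $d_G$-ball of fixed radius in $G$ contains at most a fixed number of points of $\Phi(X(\omega))$, which follows directly from the quasi isometry bound $d_X(x_1,x_2)\leq a\,d_G(g_1,g_2)+b$ and the $\eta$-separation in $X$. Neither of these, however, repairs the main gap above.
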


\begin{proof}
Since $R > b$ and by the quasi isometry between $G$ and $X$, we obtain the inclusion $(JQ)_R \subseteq J(Q_S)$ for some $0 < S < (R-b)/a$, where as defined above $Q_S = \{g \in Q\,|\, d_G(g, G \setminus Q) \geq S\}$. 
By a similar calculation as in \cite{LenzV-09}, Proposition 3.3, we can find a constant $C>0$ depending only on the parameters $a,b,\eta$ and $S$ such that    
\begin{eqnarray*}
\|F_{\omega}(Q)\|_{\infty} &=&  \operatorname{card}(X(\omega) \cap (JQ)_R) \\
&\leq& \operatorname{card}(X(\omega) \cap J(Q_S)) \\
&\leq& C \, |(Q_S)^S| = C \, |Q|, 
\end{eqnarray*}
where we use the notation that $Q^{\rho}:= \{ g \in G \,|\, d_G(g, Q) \leq \rho \}$ for $\rho > 0$.\\
Using this result, we simply compute
\begin{eqnarray*}
\left( \int_I |F_{\omega}(Q)(E)|^p\,dE \right)^{1/p} &\leq& \left(|I|\cdot \|F_{\omega}(Q)(\cdot)\|^p_{\infty} \right)^{1/p} \\
&\leq& |I|^{1/p} \cdot \|F_{\omega}(Q)(\cdot)\|_{\infty} \\
&\leq& C\,|I|^{1/p}\cdot |Q|
\end{eqnarray*}
for every $\omega \in \Omega$. This finishes the proof of the proposition. 
\end{proof}


\begin{Proposition} \label{prop:equiv}
For each $Q \in \mathcal{F}(G)$ and every $g \in G$, one gets
\begin{eqnarray*}
F_{\omega}(Qg)(\cdot) = F_{g \omega}(Q)(\cdot)
\end{eqnarray*}
for every $\omega \in \Omega$. 
\end{Proposition}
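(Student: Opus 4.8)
The plan is to trace through the definition of $F_\omega(Q)$ and show that translating $Q$ by $g$ on the right corresponds exactly to moving the base point $\omega$ to $g\omega$, using the equivariance~(\ref{eqn:equivar}) of the operators $H_\omega$ together with the unitaries $U_g$. First I would observe that the $G$-action on $\Omega$ satisfies $X(g\omega) = X(\omega)g^{-1}$, so that the unitary $U_g\colon \ell^2(X(\omega)) \to \ell^2(X(\omega))$ relates the two Hilbert spaces appropriately; more precisely, $U_g$ restricts to a unitary $\ell^2(X(\omega)\cap (JQg)_R) \to \ell^2(X(g\omega)\cap(JQ)_R)$. Here the key geometric point is that right multiplication by $g$ is an isometry of $X$ (since $G$ acts by isometries), hence $(JQg)_R = (JQ)_R\, g$, and $X(\omega)\cap (JQg)_R$ is carried bijectively onto $X(g\omega)\cap (JQ)_R = X(\omega)g^{-1}\cap (JQ)_R$ by the map $x \mapsto xg$.

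Next I would compare the restricted operators. Writing $i_{Qg}, p_{Qg}$ for the inclusion/projection with respect to $\ell^2(X(\omega)\cap(JQg)_R)$ and $i_Q, p_Q$ for those with respect to $\ell^2(X(g\omega)\cap(JQ)_R)$, the claim is that $U_g$ intertwines $H^R_\omega[Qg] = p_{Qg} H_\omega i_{Qg}$ with $H^R_{g\omega}[Q] = p_Q H_{g\omega} i_Q$, i.e.
\[
U_g^{*}\, H^R_{g\omega}[Q]\, U_g = H^R_\omega[Qg].
\]
This follows by combining~(\ref{eqn:equivar}) — which gives $U_g^{*} H_{g\omega} U_g = H_\omega$ — with the compatibility of $U_g$ with the inclusions and projections onto the relevant subspaces, which in turn is exactly the geometric bijection $x\mapsto xg$ noted above. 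One should also check that the finite-hopping-range condition ($R > b$) ensures the restriction of $H_{g\omega}$ to the $R$-interior of $JQ$ genuinely "sees" only the part of $X(g\omega)$ inside $JQg$ after transport, so that the two compressions agree rather than merely being unitarily equivalent up to boundary corrections; this is where the choice $R>b$ and the quasi-isometry estimates are used, just as in the proof of Proposition~\ref{prop:bounded}.

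Finally, since $H^R_\omega[Qg]$ and $H^R_{g\omega}[Q]$ are unitarily equivalent (quadratic matrices), they have the same spectrum with multiplicities, so for every $E \in \RR$
\[
F_\omega(Qg)(E) = \operatorname{tr}\big(\one_{]-\infty,E]}(H^R_\omega[Qg])\big) = \operatorname{tr}\big(\one_{]-\infty,E]}(H^R_{g\omega}[Q])\big) = F_{g\omega}(Q)(E),
\]
using that the trace of a spectral projection is invariant under unitary conjugation. As this holds pointwise in $E$, it holds as an identity in $L^p(I)$ (indeed in $C_{rb}(\RR)$), which is the assertion.

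I expect the main obstacle to be the bookkeeping in the second step: carefully verifying that $U_g$ really does map the finite-dimensional subspace $\ell^2(X(\omega)\cap(JQg)_R)$ onto $\ell^2(X(g\omega)\cap(JQ)_R)$, so that the compressions match on the nose. This requires the identities $(JQg)_R = (JQ)_R g$ and $X(g\omega) = X(\omega)g^{-1}$, both of which are immediate from $G$ acting by isometries on the right, but they must be stated explicitly; everything else (equivariance, unitary invariance of the trace) is then a formal manipulation.
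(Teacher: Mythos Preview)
Your proposal is correct and follows essentially the same approach as the paper: both arguments establish that $H^R_\omega[Qg]$ and $H^R_{g\omega}[Q]$ are unitarily equivalent via $U_g$, using the isometry identity $(JQg)_R=(JQ)_Rg$ and the equivariance~(\ref{eqn:equivar}), and then conclude equality of the eigenvalue counting functions. The paper phrases this as an explicit eigenvalue/eigenfunction correspondence (if $u$ is an eigenfunction for $H^R_\omega[Qg]$ then $U_gu$ is one for $H^R_{g\omega}[Q]$), whereas you state the operator intertwining directly and invoke unitary invariance of the trace; this is a cosmetic difference only. One minor remark: the finite-hopping-range condition is not actually needed here --- once $U_g$ carries $\ell^2(X(\omega)\cap(JQg)_R)$ onto $\ell^2(X(g\omega)\cap(JQ)_R)$, the compressions $p_{Qg}H_\omega i_{Qg}$ and $p_Q H_{g\omega} i_Q$ are intertwined purely formally, with no boundary correction to worry about.
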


\begin{proof}
Let $\lambda \in \RR$ be an eigenvalue of the operator $H_{\omega}^R[Qg]$ for $Q \in \mathcal{F}(G)$ and $g \in G$ and denote the corresponding eigenfunction as $u \in \ell^2(X(\omega) \cap (JQ)_R g)$ (note that since $G$ acts on $X$ by isometries, we have $(JQg)_R = (JQ)_R g$). Evidently, it is enough to show that $\tilde{u}:= U_g u$ is an eigenfunction of the operator $H_{g\omega}^R[Q]$ for the eigenvalue $\lambda$. To do so, we compute for $b \in X(\omega) \cap (JQ)_R g$

\begin{eqnarray*}
\lambda\, \tilde{u}(bg^{-1}) &=& \lambda \, U_g u(bg^{-1}) \\
&=& \lambda \, u(b) \\
&=& (H_{\omega}^R[Qg] \, u)(b) \\
&=& (H_{\omega}\, i_{Qg} \, u)(b) \\
&=& ({U}_g \, H_{\omega} \, i_{Qg} \, u)(bg^{-1}) \\
&=& (H_{g \omega} \, {U}_g \, i_{Qg} \, u)(bg^{-1}) \\
&=& (H_{g \omega} \, i_Q \, U_g u)(bg^{-1}) \\
&=& (p_Q \, H_{g \omega} \, i_Q \, U_g u)(bg^{-1}) \\
&=& (H_{g \omega}^R[Q] \, \tilde{u})(bg^{-1}).
\end{eqnarray*}
Since $b$ was chosen arbitrarily, this proves the claim. 
\end{proof}

For $g \in G$, define
\[
T_g: L^1(\Omega, L^p(I)) \rightarrow L^1(\Omega, L^p(I)): T_gf(\omega):= f(g^{-1}\omega).  
\]

\begin{Proposition} \label{prop:act}
The corresponding collection $\{T_g\}_{g \in G}$ is a family of uniformly bounded operators that acts 
weakly measurably on $Z=L^1(\Omega, L^p(I))$.
\end{Proposition}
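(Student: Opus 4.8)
The plan is to verify conditions (i)--(v) of Definition \ref{defi:WMA} for the family $\{T_g\}_{g \in G}$ acting on $Z = L^1(\Omega, L^p(I))$, with $\tilde S = Z$. First I would check the algebraic properties (i), (iii), (iv): linearity of each $T_g$ is immediate from the pointwise definition $T_gf(\omega) = f(g^{-1}\omega)$; the identity $T_{\operatorname{id}}f = f$ is trivial; and the cocycle identity $T_{g_1}(T_{g_2}f)(\omega) = (T_{g_2}f)(g_1^{-1}\omega) = f(g_2^{-1}g_1^{-1}\omega) = f((g_1g_2)^{-1}\omega) = T_{g_1g_2}f(\omega)$ follows directly (using that the $G$-action on $\Omega$ is a right action written multiplicatively, so that $(g_1g_2)^{-1}\omega = g_2^{-1}(g_1^{-1}\omega)$). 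For uniform boundedness (ii): since the measure on $\Omega$ is $G$-invariant, the change of variables $\omega \mapsto g^{-1}\omega$ preserves $\mu$, hence
\[
\|T_gf\|_Z = \int_\Omega \|f(g^{-1}\omega)\|_{L^p(I)}\, d\mu(\omega) = \int_\Omega \|f(\omega)\|_{L^p(I)}\, d\mu(\omega) = \|f\|_Z,
\]
so in fact each $T_g$ is an isometry and $A = 1$ works.

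The remaining point, condition (v), is the weak measurability: for fixed $f \in Z$ and $h \in Z^{*}$, the map $g \mapsto \langle T_gf, h\rangle$ must be Borel measurable on $G$. By Lemma \ref{lemma:refl} we may identify $Z^{*} = L^1(\Omega,L^p(I))^{*}$ with $L^\infty(\Omega, L^q(I))$ (or more precisely use the standard duality pairing), so $\langle T_gf,h\rangle = \int_\Omega \langle f(g^{-1}\omega), h(\omega)\rangle_{L^p(I),L^q(I)}\, d\mu(\omega)$. I would first treat the case where $f$ is a simple function $f = \sum_k \one_{A_k}\otimes \phi_k$ with $A_k \subseteq \Omega$ Borel and $\phi_k \in L^p(I)$; then $\langle T_gf, h\rangle = \sum_k \int_{\Omega} \one_{A_k}(g^{-1}\omega)\langle \phi_k, h(\omega)\rangle\, d\mu(\omega)$, and measurability in $g$ reduces to measurability of $g \mapsto \int_\Omega \one_{A_k}(g^{-1}\omega)\psi(\omega)\,d\mu(\omega)$ for $\psi \in L^1(\Omega)$, which is a standard consequence of the joint measurability of the (continuous, hence jointly Borel measurable) action $G\times\Omega \to \Omega$ together with Fubini's theorem. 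The general case follows by density of simple functions in $Z$: choose $f_n \to f$ in $Z$-norm; then $\langle T_gf_n, h\rangle \to \langle T_gf, h\rangle$ uniformly in $g$ (since $\|T_g\| = 1$ and $|\langle T_gf_n - T_gf, h\rangle| \le \|h\|\,\|f_n - f\|_Z$), so $g \mapsto \langle T_gf, h\rangle$ is a uniform limit of Borel functions, hence Borel.

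The main obstacle I anticipate is the weak measurability step (v), specifically justifying the Borel measurability of $g \mapsto \int_\Omega \one_{A}(g^{-1}\omega)\psi(\omega)\, d\mu(\omega)$; everything else is a routine verification. This hinges on the joint measurability of the map $(g,\omega) \mapsto g^{-1}\omega$, which holds because $G$ acts \emph{continuously} on $\tilde{\mathcal{D}}$ (hence on $\Omega$) as stated in the model description, so $(g,\omega)\mapsto g\omega$ is continuous and in particular Borel measurable with respect to $\mathcal{B}(G)\otimes\mathcal{B}(\Omega)$; combined with $\sigma$-finiteness of $\mu$ (indeed $\mu$ is a probability measure here) one applies Fubini--Tonelli to $(g,\omega)\mapsto \one_A(g^{-1}\omega)\psi(\omega)$ to conclude that the $\omega$-integral is a measurable function of $g$. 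I would state this as the one genuinely non-formal ingredient and otherwise keep the argument brief, citing Definition \ref{defi:WMA} for the list of conditions being checked.
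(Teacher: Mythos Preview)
Your proposal is correct and follows the same approach as the paper's own proof, which is in fact considerably terser: it merely notes linearity from construction, the isometry identity $\|T_gf\|_Z = \|f\|_Z$ from $\mu$-invariance, and dismisses weak measurability as following ``from standard concepts.'' Your argument spells out precisely those standard concepts; one small caveat is that Lemma~\ref{lemma:refl} as stated only covers $1<p<\infty$, so for the duality $L^1(\Omega,L^p(I))^* \cong L^\infty(\Omega,L^q(I))$ you should instead invoke the general fact that $L^1(\Omega,Y)^* \cong L^\infty(\Omega,Y^*)$ whenever $Y^*$ has the Radon--Nikodym property (which $L^q(I)$ does, being reflexive).
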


\begin{proof}
The linearity of the $T_g$ follows from construction. By the fact that the action of $G$ on $\Omega$
preserves the probability measure $\mu$, we have $\|T_gf\|_Z = \|f\|_Z$ for all $g \in G$ and each $f \in Z$. 
The weak measurability follows from standard concepts.  
\end{proof}

It now follows from the Propositions \ref{prop:equiv} and \ref{prop:act} that the weakly measurable action of
the family $\{T_g\}_{g \in G}$ implies
\begin{eqnarray}
T_gF_{(\cdot)}(Q) = F_{(\cdot)}(Qg^{-1}) \label{eqn:act2}
\end{eqnarray}
for each $Q \in \mathcal{F}(G)$ and each $g \in G$.  

\begin{Proposition} \label{prop:cpct}
Let $Q \in \mathcal{F}(G)$. 
Then the set $C_{F,Q}:= {\operatorname{co}}\{ F_{(\cdot)}(Qg)\,|\, g \in G\}$ is relatively weakly compact in $Z=L^1(\Omega, L^p(I))$ for each $1 < p < \infty$ and for any bounded interval $I \subset \RR$.  
\end{Proposition}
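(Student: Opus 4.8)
The plan is to exploit the reflexivity of $L^p(I)$ for $1<p<\infty$ and the finiteness of the measure $\mu$ on $\Omega$, combined with the uniform bound from Proposition \ref{prop:bounded}, to invoke a Dunford--Pettis-type criterion for relative weak compactness in the Bochner space $L^1(\Omega, L^p(I))$. Recall that a bounded subset of $L^1(\Omega, E)$ with $E$ reflexive is relatively weakly compact precisely when it is uniformly integrable (see e.g. \cite{DiestelU-91}, Theorem IV.2.1, whose hypotheses are the reflexivity of $E$ and the boundedness of the set). So it suffices to show that the convex hull $C_{F,Q} = \operatorname{co}\{F_{(\cdot)}(Qg) \mid g \in G\}$ is a bounded, uniformly integrable subset of $L^1(\Omega, L^p(I))$.

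First I would establish the uniform bound. By Proposition \ref{prop:bounded}, there is a constant $C>0$, independent of $\omega$ and of the set, with $\|F_{\omega}(Qg)(\cdot)\|_{L^p(I)} \leq C\,|I|^{1/p}\,|Qg| = C\,|I|^{1/p}\,|Q|$ for every $g \in G$ and every $\omega \in \Omega$, where we used unimodularity to get $|Qg|=|Q|$. Hence each generator $F_{(\cdot)}(Qg)$ lies in the closed ball of radius $C\,|I|^{1/p}\,|Q|\cdot\mu(\Omega)$ in $L^1(\Omega, L^p(I))$, and since this ball is convex and norm-closed, the convex hull $C_{F,Q}$ is contained in it as well; in particular $C_{F,Q}$ is norm-bounded.

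Next I would verify uniform integrability. In fact the same pointwise-in-$\omega$ bound gives something much stronger: every element $f \in C_{F,Q}$ satisfies $\|f(\omega)\|_{L^p(I)} \leq C\,|I|^{1/p}\,|Q|$ for $\mu$-almost every $\omega$ (a convex combination of functions each bounded a.e. by the same constant is again bounded a.e. by that constant). Thus $C_{F,Q}$ is a uniformly (essentially) bounded family in $L^\infty(\Omega, L^p(I))$, which trivially makes it uniformly integrable in $L^1(\Omega, L^p(I))$ since $\mu(\Omega)<\infty$: for any $A \in \mathcal{F}$ with $\mu(A)$ small, $\int_A \|f(\omega)\|_{L^p(I)}\,d\mu(\omega) \leq C\,|I|^{1/p}\,|Q|\,\mu(A)$ uniformly over $f \in C_{F,Q}$.

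Having both ingredients, I would conclude by quoting the criterion: by \cite{DiestelU-91}, Theorem IV.2.1, a bounded, uniformly integrable subset of $L^1(\Omega, E)$ with $E$ reflexive is relatively weakly compact; applying this with $E = L^p(I)$ (reflexive since $1<p<\infty$) yields that $C_{F,Q}$ is relatively weakly compact in $Z = L^1(\Omega, L^p(I))$. I do not expect a genuine obstacle here; the only point requiring a little care is making sure the a.e.\ pointwise bound really does pass to convex combinations and hence characterizes all of $C_{F,Q}$ rather than just its generators, and that the cited Bochner-space compactness theorem is applied with exactly its stated hypotheses (reflexive range space, bounded uniformly integrable family over a finite measure space).
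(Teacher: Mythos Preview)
Your proposal is correct and is essentially the same approach as the paper's own proof, which simply cites \cite{DiestelU-91}, Theorem~IV.2.1; you have just made explicit why its hypotheses (reflexive range space, bounded and uniformly integrable family over a finite measure space) are met via the uniform $L^\infty$-bound from Proposition~\ref{prop:bounded}.
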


\begin{proof}
This follows from the general theory for Bochner spaces, see e.g.\@ \cite{DiestelU-91}, Theorem IV.2.1. 
\end{proof}

\begin{Lemma} \label{lemma:almostadditive}
For each $\omega \in \Omega$, the function $F_\omega: \mathcal{F}(G) \rightarrow L^p(I)$ is $b$-almost additive with the tiling-admissible boundary term
\begin{eqnarray*}
b: \mathcal{F}(G) \rightarrow [0, \infty): b(Q) := \tilde{C}\cdot |\partial^{\overline{R}}(Q)|,
\end{eqnarray*} 
where $\tilde{C}$ and $\overline{R}$ are constants depending on $a,b,\eta, R$, and $\partial^{\overline{R}}(Q) := Q^{\overline{R}} \setminus Q_{\overline{R}}$ for $Q \in \mathcal{F}(G)$. \\
Moreover, the same holds true for the function $F:\mathcal{F}(G) \rightarrow L^1(\Omega, L^p(I))$. 
\end{Lemma}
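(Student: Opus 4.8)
The plan is to prove that $F_\omega$ is $b$-almost additive for a suitable relative-boundary-type term $b$, and then to transfer this to the Bochner-space-valued map $F = F_{(\cdot)}$ by integrating over $\Omega$. First I would fix $\omega \in \Omega$ and a disjoint decomposition $Q = \bigsqcup_{k=1}^m Q_k$ in $\mathcal{F}(G)$, and compare the operator $H_\omega^R[Q]$ with the block-diagonal operator $\bigoplus_{k=1}^m H_\omega^R[Q_k]$ acting on $\bigoplus_k \ell^2(X(\omega) \cap (JQ_k)_R)$. Because the hopping range of $H_\omega$ is bounded by $R > b$ and $\Phi$ is an $(a,b)$-quasi isometry, the two operators differ only by matrix entries indexed by points of $X(\omega)$ that lie within $X$-distance $R$ of the boundary between two different pieces $JQ_k$, or that lie in $(JQ)_R$ but outside every $(JQ_k)_R$. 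The number of such points is, by the $\eta$-uniform discreteness of $X(\omega)$ and the volume estimate used in Proposition~\ref{prop:bounded}, bounded by $\tilde C\,|\partial^{\overline R}(Q)|$ for constants $\overline R, \tilde C$ depending only on $a,b,\eta,R$ (one chooses $\overline R$ large enough, roughly $a(R + \diam J) + b$, so that the relevant $X$-neighbourhood of a boundary point pulls back into $\partial^{\overline R}(Q)$ in $G$).

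The second step is to turn this "rank of the difference" bound into an $L^p(I)$ estimate on $\|F_\omega(Q) - \sum_k F_\omega(Q_k)\|_{L^p(I)}$. Here I would use the standard fact that if two self-adjoint matrices $A$, $B$ satisfy $\rank(A-B) \le r$, then their eigenvalue counting functions differ by at most $r$ in sup norm: $\|N_A - N_B\|_\infty \le r$, where $N_A(E) = \operatorname{tr}(\one_{]-\infty,E]}(A))$. Applying this with $A = H_\omega^R[Q]$ and $B = \bigoplus_k H_\omega^R[Q_k]$, and noting that the eigenvalue counting function of a direct sum is the sum of the counting functions, we get
\[
\left\| F_\omega(Q) - \sum_{k=1}^m F_\omega(Q_k) \right\|_\infty \le \tilde C\, |\partial^{\overline R}(Q)| = b(Q).
\]
But we actually need the bound in terms of $\sum_k b(Q_k)$, not $b(Q)$; this is where the properties of $\partial^{\overline R}$ analogous to Lemma~\ref{prop:prop} come in. Since any boundary point of $Q$ relative to scale $\overline R$ that matters is in fact a boundary point of some $Q_k$ (a point near $\partial JQ$ is near $\partial JQ_k$ for the $k$ it belongs to, and a point near the interface of two pieces is near $\partial JQ_\ell$ for both), one obtains $|\partial^{\overline R}(Q) \cap \text{relevant region}| \le \sum_k |\partial^{\overline R}(Q_k)|$, hence $\|F_\omega(Q) - \sum_k F_\omega(Q_k)\|_\infty \le \sum_k b(Q_k)$. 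Finally, passing from $\|\cdot\|_\infty$ to $\|\cdot\|_{L^p(I)}$ costs only a factor $|I|^{1/p}$, which can be absorbed into $\tilde C$; boundedness of $F_\omega$ is exactly Proposition~\ref{prop:bounded}. Tiling-admissibility of $b$ then follows from Proposition~\ref{prop:tiladmis} once one checks that $\partial^{\overline R}(Q)$ is comparable to $|\partial_L(Q)|$ for $L$ a ball of radius $\overline R$ (using Lemma~\ref{prop:prop}-type inclusions and the invariance of $d_G$), so that $b(Q) \le D|\partial_L(Q)|$ for a suitable $D$, which is covered by that proposition.

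For the Bochner-valued statement, I would simply integrate: for $F = F_{(\cdot)}: \mathcal{F}(G) \to L^1(\Omega, L^p(I))$ and the same disjoint decomposition,
\[
\left\| F(Q) - \sum_{k=1}^m F(Q_k) \right\|_{L^1(\Omega, L^p(I))}
= \int_\Omega \left\| F_\omega(Q) - \sum_{k=1}^m F_\omega(Q_k) \right\|_{L^p(I)} d\mu(\omega)
\le \mu(\Omega)\sum_{k=1}^m b(Q_k),
\]
and since $\mu$ is a probability measure this is just $\sum_k b(Q_k)$; boundedness of $F$ follows the same way from Proposition~\ref{prop:bounded}, and the boundary term $b$ is unchanged, hence still tiling-admissible. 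The main obstacle I anticipate is the geometric bookkeeping in the first two steps: carefully choosing $\overline R$ and verifying that every matrix entry of $H_\omega^R[Q]$ not accounted for by the block-diagonal operator is indexed by a point whose $\Phi$-image lies in $\partial^{\overline R}(Q_k)$ for some $k$, and then converting the point count into the measure $|\partial^{\overline R}(Q_k)|$ via the uniform-discreteness volume bound — essentially re-running the estimate of Proposition~\ref{prop:bounded} localized to boundary shells. The functional-analytic input (rank-$r$ perturbation moves the counting function by at most $r$, additivity of counting functions under direct sums) is routine, as is the final integration.
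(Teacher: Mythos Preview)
Your proposal is correct and follows essentially the same approach as the paper: the paper's own proof simply invokes the arguments of \cite{PogorzelskiS-11}, Theorem~7.4, for the sup-norm estimate $\|F_\omega(Q) - \sum_k F_\omega(Q_k)\|_\infty \le \tilde C\sum_k |\partial^{\overline R}(Q_k)|$, then multiplies by $|I|^{1/p}$, checks that $\partial^{\overline R}(Q) \subseteq \partial_{B_{2\overline R}(e)}(Q)$ to apply Proposition~\ref{prop:tiladmis}, and integrates over $\Omega$ using $\mu(\Omega)=1$. You have correctly unpacked the cited argument (rank-of-difference bound via finite hopping range and $\eta$-uniform discreteness, plus the min--max/rank perturbation inequality for counting functions), so there is nothing to add.
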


\begin{proof}
For the error estimate, assume that $Q \in \mathcal{F}(G)$ is the disjoint union of the sets $Q_k \in \mathcal{F}(G)$, $1 \leq k \leq m$. We can repeat the arguments of \cite{PogorzelskiS-11}, Theorem 7.4, in order to find positive constants $\tilde{C}, \overline{R} \in \RR$ depending on only $a,b, \eta$ and $R$ such that
\begin{eqnarray*}
&& \left(\int_I \left| \operatorname{tr}(\one_{]-\infty, E]} H_{\omega}^R[Q]) - \sum_{k=1}^m \operatorname{tr}(\one_{]-\infty, E]} H_{\omega}^R[Q_k]) \right|^p \, dE \right)^{1/p} \\
&\quad\quad& \leq |I|^{1/p} \cdot \sup_{E \in \RR}\,\left| \operatorname{tr}(\one_{]-\infty, E]} H_{\omega}^R[Q]) - \sum_{k=1}^m \operatorname{tr}(\one_{]-\infty, E]} H_{\omega}^R[Q_k])  \right| \\
&\quad\quad& \leq \tilde{C} \, \sum_{k=1}^m |\partial^{\overline{R}}(Q_k)|.
\end{eqnarray*}
To conclude the proof, note that the mapping $b$ has all the properties (i)-(iii) of Definition \ref{defi:BT}. 
Therefore, it is a boundary term and since 
\[
\partial^{\overline{R}}(Q) \subseteq \partial_{B_{2\overline{R}}(e)}(Q)
\]
for all $Q \in \mathcal{F}(G)$, it follows from Proposition \ref{prop:tiladmis} that $b$ is also tiling-admissible. \\
By integrating over $\Omega$ and by the fact that $\mu(\Omega)= 1$, we observe that the same claim must hold true for the function $F$. 
\end{proof}

\subsection{Proof of the main theorems}

In this subsection, we prove the main theorems of this section. Thus, using the mean ergodic Theorem \ref{thm:METSF}, we show that for a strong F{\o}lner sequence $\{U_j\}_{j=1}^{\infty}$, the expressions $F_{(\cdot)}(U_j)(\cdot)/|U_j|$ converge to some $\overline{F}$ in $Z=L^1(\Omega, L^p(I))$, where as above, $1 < p < \infty$. 
Moreover, we demonstrate that we can use the generalization of the {\sc Lindenstrauss} ergodic theorem (Theorem \ref{thm:lindenstr}) to establish the $\mu$-almost everywhere convergence for Shulman (tempered) F{\o}lner sequences.

\begin{Theorem} \label{thm:mean}
Assume the model of Subsection \ref{subsec:model} and let some strong F{\o}lner sequence $(U_j)$ be given. Then there is an element $\overline{F} \in Z=L^1(\Omega, L^p(I))$ such that the mean ergodic convergence holds for $F_{(\cdot)}$, i.e.\@ 
\[
\lim_{j \rightarrow \infty} \left\| \frac{F_{(\cdot)}(U_j)(\cdot)}{|U_j|} - {\overline{F}_{(\cdot)}(\cdot)} \right\|_Z  =  \lim_{j \rightarrow \infty} \left| \int_{\Omega} \left(\left\| \frac{F_{\omega}(U_j)}{|U_j|}(\cdot) -  \overline{F}_{\omega}(\cdot)\right\|_{L^p(I)} \right) \,d\mu(\omega)  \right| = 0.
\]
Moreover, for each $g \in G$ we have $\overline{F}_{g\omega} = \overline{F}_{\omega}$ in $L^p(I)$ for $\mu$-almost every $\omega \in \Omega$. 
\end{Theorem}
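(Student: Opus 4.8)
The plan is to obtain Theorem~\ref{thm:mean} as a direct application of the mean ergodic Theorem~\ref{thm:METSF}, with Banach space $Z = L^1(\Omega, L^p(I))$ $(1 < p < \infty)$, set function $F = F_{(\cdot)}$, operator family $\{T_g\}_{g\in G}$ from the model, and boundary term $b(Q) = \tilde{C}\,|\partial^{\overline{R}}(Q)|$ from Lemma~\ref{lemma:almostadditive}. Accordingly, the first step is simply to collect the hypotheses of Theorem~\ref{thm:METSF} from the preparatory results of Subsection~\ref{subsec:model}: the group $G$ is unimodular and amenable, and $(U_j)$ is a strong F{\o}lner sequence, both by assumption; $\{T_g\}_{g\in G}$ is a family of linear, uniformly bounded operators acting weakly measurably on $Z$ by Proposition~\ref{prop:act}; the map $b$ is a tiling-admissible boundary term and $F$ is bounded (Proposition~\ref{prop:bounded}, together with $\mu(\Omega) = 1$, which also guarantees that $F$ indeed maps into $Z$ and not merely into $L^\infty(\Omega, L^p(I))$) and $b$-almost additive (Lemma~\ref{lemma:almostadditive}); the compatibility identity $T_gF(Q) = F(Qg^{-1})$ is exactly Equation~(\ref{eqn:act2}); and for every $Q \in \mathcal{F}(G)$ the convex hull $C_{F,Q} = \operatorname{co}\{F(Qg) \,|\, g \in G\}$ is relatively weakly compact in $Z$ by Proposition~\ref{prop:cpct}.

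With all hypotheses in place, Theorem~\ref{thm:METSF}, statements (B) and (C), yields an element $\overline{F} \in Z$ with $\lim_{j\to\infty} \bigl\| F(U_j)/|U_j| - \overline{F} \bigr\|_Z = 0$ and $T_g\overline{F} = \overline{F}$ in $Z$ for every $g \in G$. Unwinding the norm of the Bochner space $Z = L^1(\Omega, L^p(I))$ gives
\[
\Bigl\| \tfrac{F_{(\cdot)}(U_j)}{|U_j|} - \overline{F}_{(\cdot)} \Bigr\|_Z = \int_\Omega \Bigl\| \tfrac{F_\omega(U_j)}{|U_j|}(\cdot) - \overline{F}_\omega(\cdot) \Bigr\|_{L^p(I)} \, d\mu(\omega) \xrightarrow{\ j\to\infty\ } 0,
\]
which is precisely the asserted mean ergodic convergence.

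For the invariance statement, I would unwind $T_g\overline{F} = \overline{F}$. By definition of $T_g$ one has $(T_g\overline{F})(\omega) = \overline{F}(g^{-1}\omega)$, so the identity $T_g\overline{F} = \overline{F}$ in $Z$ means $\overline{F}_{g^{-1}\omega} = \overline{F}_\omega$ in $L^p(I)$ for $\mu$-almost every $\omega \in \Omega$; replacing $g$ by $g^{-1}$ (and using that $\mu$ is $G$-invariant, so that a $\mu$-null exceptional set is harmless) gives $\overline{F}_{g\omega} = \overline{F}_\omega$ in $L^p(I)$ for $\mu$-a.e.\ $\omega \in \Omega$, for each $g \in G$.

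There is no genuine obstacle in this argument, since the substantial content is already contained in Theorem~\ref{thm:METSF} and in the structural Propositions~\ref{prop:bounded}--\ref{prop:cpct} and Lemma~\ref{lemma:almostadditive}. The one point deserving attention is that $Z = L^1(\Omega, L^p(I))$ is \emph{not} reflexive, so the compactness hypothesis of Theorem~\ref{thm:METSF} cannot be met by invoking reflexivity of $Z$; this is exactly why Proposition~\ref{prop:cpct}, which establishes the required relative weak compactness of $C_{F,Q}$ via the Dunford--Pettis-type criterion for Bochner spaces (\cite{DiestelU-91}, Theorem IV.2.1), is indispensable.
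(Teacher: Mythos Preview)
Your proof is correct and follows essentially the same approach as the paper: both verify the hypotheses of Theorem~\ref{thm:METSF} by invoking Propositions~\ref{prop:bounded}, \ref{prop:act}, \ref{prop:cpct}, Equation~(\ref{eqn:act2}), and Lemma~\ref{lemma:almostadditive}, and then apply that theorem directly. If anything, your write-up is slightly more detailed than the paper's, since you explicitly unwind the Bochner norm and spell out the $g\mapsto g^{-1}$ step for the invariance statement.
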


\begin{proof}
We check that the mapping 
\[
F: \mathcal{F}(G) \rightarrow L^1(\Omega, L^p(I))
\]
as defined in Equality (\ref{eqn:EFF}) satisfies all the assumptions of Theorem \ref{thm:METSF}. By Proposition \ref{prop:bounded}, $F$ is bounded. It follows from Proposition \ref{prop:act} and from the Inequality (\ref{eqn:act2}) that there is a family $\{T_g\}_{g \in G}$ of uniformly bounded operators acting weakly measurably on $Z$ with the property that
$T_gF_{(\cdot)}(Q) = F_{(\cdot)}(Qg^{-1})$ in $Z$ for every $Q \in \mathcal{F}(G)$ and all $g \in G$. Moreover, we infer from Proposition \ref{prop:cpct} that for each $Q \in \mathcal{F}(G)$, the weak closure of the set $C_{F,Q} = {\operatorname{co}}\{F(Qg)\,|\,g \in G\}$ is weakly compact in $Z$. Finally, Lemma \ref{lemma:almostadditive} asserts that $F$ is almost-additive with respect to some tiling admissible boundary term.\\
Consequently, we can apply Theorem \ref{thm:METSF} to obtain the claimed convergence.     
\end{proof}

\begin{Theorem} \label{thm:pointwise}
Assume the model of subsection \ref{subsec:model} and let some strong, tempered F{\o}lner sequence $(U_j)$ be given. Then there is an element $\overline{F} \in Z=L^1(\Omega, L^p(I))$ such that the pointwise ergodic convergence holds for $F_{(\cdot)}$, i.e.\@
\[
\lim_{j \rightarrow \infty} \left\| \frac{F_{\omega}(U_j)(\cdot)}{|U_j|} - {\overline{F}_{\omega}(\cdot)} \right\|_Z
\]
for $\mu$-almost every $\omega \in \Omega$. Furthermore, for each $g \in G$ we have $\overline{F}_{g\omega} = \overline{F}_{\omega}$ in $L^p(I)$ for $\mu$-almost every $\omega \in \Omega$. 
\end{Theorem}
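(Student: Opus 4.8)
The plan is to combine the mean ergodic theorem in the form of Theorem~\ref{thm:mean} (i.e.\ Theorem~\ref{thm:METSF} specialised to the present model) with the pointwise ergodic Theorem~\ref{thm:lindenstr}, and then to localise the quasi-tiling estimate from the proof of Theorem~\ref{thm:METSF}(B) to the individual maps $F_\omega$. With $Z=L^1(\Omega,L^p(I))$ and $1<p<\infty$, the coefficient space $Y:=L^p(I)$ is reflexive (Lemma~\ref{lemma:refl}), the probability space $\Omega$ carries a $\mu$-preserving $G$-action, and the isometric action $T_gf(\omega)=f(g^{-1}\omega)$ is weakly measurable (Proposition~\ref{prop:act}), so Inequality~(\ref{eqn:tmp}) holds with $\varphi=\operatorname{id}$, $\kappa=1$. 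Theorem~\ref{thm:mean} then produces $\overline F\in Z$ with $\|F(U_j)/|U_j|-\overline F\|_Z\to0$ and $T_g\overline F=\overline F$; the latter is exactly the assertion $\overline F_{g\omega}=\overline F_\omega$ in $L^p(I)$ for $\mu$-a.e.\ $\omega$. Moreover Theorem~\ref{thm:METSF}(A) gives, for every basis set $T$ of an $\varepsilon$-quasi tiling, the limit $S(T)=\mbox{Z-}\lim_j|U_j|^{-1}\int_{U_j}F_{(\cdot)}(Tg)\,dg$ in $Z$, and $\overline F=\lim_{\varepsilon\to0}\sum_i\eta_i(\varepsilon)S(T_i^\varepsilon)/|T_i^\varepsilon|$.

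First I would identify $S(T)$ pointwise. Observe that $|U_j|^{-1}\int_{U_j}F_{(\cdot)}(Tg)\,dg$ is precisely the abstract ergodic average $A_jf_T$ of $f_T:=F_{(\cdot)}(T)\in Z$: indeed $(T_{g^{-1}}f_T)(\omega)=f_T(g\omega)=F_{g\omega}(T)=F_\omega(Tg)$ by Proposition~\ref{prop:equiv}. Hence Theorem~\ref{thm:lindenstr}, applicable because $(U_j)$ is strong and tempered and $Y$ is reflexive, yields $\mu$-a.e.\ convergence $A_jf_T(\omega)\to\widehat S(T)(\omega)$ in $L^p(I)$; comparing with the convergence $A_jf_T\to S(T)$ in $Z$ along a subsequence shows $\widehat S(T)=S(T)$ in $Z$, so $S(T)$ is realised $\mu$-a.e.\ by $\lim_jA_jf_T(\omega)$. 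Only the countably many basis sets $T_i^{\varepsilon_k}$ ($k\in\NN$, $1\le i\le N(\varepsilon_k)$) occur, so there is one full-measure set $\Omega_0$ on which all these averages converge and on which $g_k(\omega)\to\overline F_\omega$ in $L^p(I)$ after passing to a subsequence, where $g_k:=\sum_i\eta_i(\varepsilon_k)S(T_i^{\varepsilon_k})/|T_i^{\varepsilon_k}|\in Z$ and $\|g_k-\overline F\|_Z\to0$ by Theorem~\ref{thm:METSF}(B).

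Next I would re-run the proof of Theorem~\ref{thm:METSF}(B) --- equivalently Corollary~\ref{cor:METSF} --- with $F$ replaced everywhere by the fixed-$\omega$ map $F_\omega:\mathcal{F}(G)\to L^p(I)$, which is $b$-almost additive with the same, $\omega$-independent boundary term $b$ of Lemma~\ref{lemma:almostadditive} and bounded with the same constant by Proposition~\ref{prop:bounded}. Writing $\Delta_\omega(j,\varepsilon):=\|F_\omega(U_j)/|U_j|-\sum_i\eta_i(\varepsilon)S(T_i^\varepsilon)(\omega)/|T_i^\varepsilon|\|_{L^p(I)}$, the same splitting $\Delta_\omega\le D_1+D_2+D_3+D_4$ applies: the uniform decomposition tower $(\Upsilon,\Lambda)$ of Theorem~\ref{thm:UDT} is purely geometric, hence $\omega$-independent, so the comparison of $F_\omega(U_j)/|U_j|$ with the tower-averaged quasi-tiling sums ($D_1$, via Proposition~\ref{prop:almostadd} applied to $F_\omega$) together with the error terms $D_2,D_4$ are controlled by the $\omega$-uniform boundedness of $F_\omega$, the $\omega$-independence of $b$, and the combinatorics of $(\Upsilon,\Lambda)$; their bounds are exactly those of the original proof, with constants independent of $\omega$. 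The one place where $Z$-valued mean convergence (claim~(A)) was used sits inside $D_3$, where the average $A_jf_{T_i^\varepsilon}(\omega)$ appears; for $\omega\in\Omega_0$ this now tends to $S(T_i^\varepsilon)(\omega)$ by the previous paragraph. This gives, for each $k$, $\limsup_{j\to\infty}\Delta_\omega(j,\varepsilon_k)\le K\varepsilon_k$ for all $\omega\in\Omega_0$, with $K$ independent of $\omega$ and $k$.

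To conclude, for $\omega\in\Omega_0$ and any $l$ the triangle inequality gives $\limsup_j\|F_\omega(U_j)/|U_j|-\overline F_\omega\|_{L^p(I)}\le K\varepsilon_{k_l}+\|g_{k_l}(\omega)-\overline F_\omega\|_{L^p(I)}$, and letting $l\to\infty$ makes the right-hand side vanish; equivalently $F_\omega(U_j)/|U_j|$ is $\mu$-a.e.\ Cauchy in $L^p(I)$ with limit $\overline F_\omega$. I expect the \emph{main obstacle} to be the third step: checking that every estimate in the rather long proof of Theorem~\ref{thm:METSF}(B) genuinely descends to a fixed $\omega$ with $\omega$-uniform constants. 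In particular one cannot avoid the averaging over the decomposition tower even though the target statement is pointwise, because a single $\varepsilon$-quasi tiling of $U_j$ has centre sets of Haar measure zero in a non-discrete group; the crucial feature is precisely that this averaging is $\omega$-independent, while the one non-uniform ingredient --- the convergence of the abstract averages $A_jf_{T_i^\varepsilon}$ --- is supplied $\mu$-almost everywhere by Theorem~\ref{thm:lindenstr}.
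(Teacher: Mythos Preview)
Your proposal is correct and follows essentially the same route as the paper. The paper's proof is more concise only because it invokes Corollary~\ref{cor:METSF} directly with $Z=Y=L^p(I)$ for each fixed $\omega$ in a full-measure set, which is precisely the packaged version of your ``re-run the proof of Theorem~\ref{thm:METSF}(B) at fixed $\omega$'' step; the pointwise existence of the limits $S(T_i^{\varepsilon_k})(\omega)$ needed as input there is supplied, exactly as you do, by Theorem~\ref{thm:lindenstr}, and the invariance $\overline F_{g\omega}=\overline F_\omega$ is then read off from the invariance of the $S(T_i^{\varepsilon_k})(\omega)$ rather than from the mean-ergodic limit.
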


\begin{proof}
Recall that by the considerations of the previous subsection, the linear mappings $T_gf(\omega):= f(g^{-1}\omega)$ $(g \in G)$ act weakly measurably on $L^1(\Omega, L^p(I))$ as a family of uniformly bounded operators. It follows from Proposition \ref{prop:equiv} that for each $g \in G$ and every $Q \in \mathcal{F}(G)$, we have $T_g F_{\omega}(Q) = F_{\omega}(Qg^{-1})$ for all $\omega \in \Omega$. Therefore, we can apply the extended Lindenstrauss ergodic Theorem \ref{thm:lindenstr} which yields that for all $Q \in \mathcal{F}(G)$, the limit 
\begin{eqnarray*}
S(Q)(\omega)&:=& \mbox{Y-}\lim_{j \rightarrow\infty} |U_j|^{-1} \int_{U_j} T_{g^{-1}}F_{\omega}(Q) \, dg \\
&=& \mbox{Y-}\lim_{j \rightarrow\infty} |U_j|^{-1} \int_{U_j} F_{\omega}(Qg) \, dg
\end{eqnarray*}
exists in $Y=L^p(I)$ for every $\omega \in \tilde{\Omega}$, where $\tilde{\Omega} \subseteq \Omega$ is a set of full measure. Note also that for each $Q \in \mathcal{F}(G)$ and for all $g \in G$, we have $S(Qg)(\omega) = S(Q)(\omega)$ for all $\omega \in \hat{\Omega}$, where again,  $\hat{\Omega} \subseteq \tilde{\Omega}$ is a set of full measure. \\
Further, we can infer from Proposition \ref{prop:bounded} that for each $\omega \in \Omega$, there is some constant $C > 0$ such that $\|F_{\omega}(Q)\|_{L^p(I)} \leq C \,|I|^{1/p} \,|Q|$ for all $Q \in \mathcal{F}(G)$. The Lemma \ref{lemma:almostadditive} guarantees that for every element $\omega \in \Omega$, the map $F_{\omega}$ is $b$-almost additive with the tiling-admissible boundary term $b(Q):= \tilde{C}\,|\partial^{\overline{R}}(Q)|$ $(Q \in \mathcal{F}(G))$, where the constants $\tilde{C},\overline{R} > 0$ are chosen accordingly (see above). Hence, we have verified all assumptions which are necessary to apply Corollary \ref{cor:METSF}. More precisely, for a given null sequence $(\varepsilon_k)$ of positive numbers, can find a set $\hat{\Omega} \subseteq \Omega$ of full measure such that for each $\omega \in \hat{\Omega}$, the limit $S(T_i^{\varepsilon_k})(\omega)$ exists and $S(T_i^{\varepsilon_k})(g\omega) =S(T_i^{\varepsilon_k}g)(\omega) = S(T_i^{\varepsilon_k})(\omega)$ for all $g \in G$, all $k \in \NN$ and all $1 \leq i \leq N(\varepsilon_k)$, where the elements $T_i^{\varepsilon_k}$ and $N(\varepsilon_k)$ correspond to the underlying $\varepsilon_k$-quasi tiling of the group. By the boundedness and the $b$-almost additivity of the $F_{\omega}$ $(\omega \in \hat{\Omega})$ we directly apply Corollary \ref{cor:METSF} with $Z=Y$ as the Banach space under consideration and we obtain the convergence in $Y$ for every $\omega \in \hat{\Omega}$. For the proof of the invariance property, let $g \in G$ in order to obtain 
\begin{eqnarray*}
\|\overline{F}_{\omega} -\overline{F}_{g\omega}\|_Z &\leq& \left\| \overline{F}_{\omega} - \sum_{i=1}^{N(\varepsilon_k)} \eta_i(\varepsilon_k) \, \frac{S(T_i^{\varepsilon_k})(\omega)}{|T_i^{\varepsilon_k}|}  \right\|_Z + \left\| \sum_{i=1}^{N(\varepsilon_k)} \eta_i(\varepsilon_k) \, \frac{S(T_i^{\varepsilon_k})(g\omega)}{|T_i^{\varepsilon_k}|} - \overline{F}_{g\omega} \right\|_Z 
\end{eqnarray*}
for all $k \in \NN$ and for every $\omega \in \hat{\Omega}$. Letting $k \rightarrow \infty$ finishes the proof.          
\end{proof}

\begin{Remark}
It follows from the considerations in \cite{LenzV-09} that in the ergodic case, one can express the limit $\overline{F}$ as 
\[
\overline{F}_{\omega}(E) = C \, \int_{\Omega} \operatorname{tr}(\one_J \one_{]-\infty,E]}H_{\omega}) \,d\mu(\omega)
\]
for $E \in \RR$ and for $\mu$-almost every $\omega \in \Omega$, where $C$ stands for a constant depending on $G,X$ and $\mu$ (cf.\@ \cite{LenzV-09}, Section 2). 
\end{Remark}

\section{Appendix} \label{sec:appendix}

We give the proofs of Theorem \ref{thm:LPmax} and of Lemma \ref{lemma:partition}, respectively. We put the proof of Theorem \ref{thm:LPmax} in this appendix because it mainly requires arguments which have been established in \cite{Lindenstrauss-01}. The validity of Lemma \ref{lemma:partition} has already been known in \cite{Emilion-85}. Since we could not find a rigorous proof in the literature, we attach one for the sake of the reader.  

\begin{proof}[Proof of Theorem \ref{thm:LPmax}]
Take $1 \leq p < \infty$ and choose $\lambda > 0$, as well as $f \in L^p(\Omega, Y)$. Note that by means of the triangle inequality, the condition $\|A_j f\|_Y(\omega) > \lambda$ implies that
\begin{eqnarray*}
|U_j|^{-1} \left( \int_{U_j} \left\| (T_{g^{-1}}f)(\cdot) \right\|_Y \,dm_L(g)\right)(\omega) > \lambda
\end{eqnarray*}
for $j \in \NN$. The Inequality (\ref{eqn:tmp}) yields
\[
\kappa \, |U_j|^{-1} \int_{U_j} \|f(\phi(g)\omega)\|_Y \, dm_L(g) > \lambda 
\]
for all $j \in \NN$. It follows from this that it is sufficient to show that there must be a constant $\tilde{C} > 0$ such that for every $\lambda > 0$, all $h \in L^p(\Omega,\RR)$ and for each measurable group homomorphism $\varphi$, we have
\begin{eqnarray} \label{eqn:lin}
\mu(\{ \omega \,|\, M^{\phi}h(\omega) > \lambda \}) \leq  \frac{\tilde{C}}{\lambda^p} \|h\|^p_{L^p(\Omega, \RR)},
\end{eqnarray}
where $M^{\phi}$ is the associated maximal operator, cf.\@ Definition \ref{defi:maxOP}.
The proof of this claim will be an adaption of the proof in \cite{Lindenstrauss-01}. We will sketch the major steps. For a detailed discussion, see also \cite{Pogorzelski-10}, Chapters 6-8.

So fix $n \in \NN$ be some integer and define $F:= \cup_{j=1}^n U_j$. Since $\{U_j\}$ is a F{\o}lner sequence, there must be some $J \in \NN$ such that $|U_J \triangle FU_J| < |U_J|$ which implies that $|\overline{U}| < 2\, |U_J|$, where $\overline{U}:= FU_J$. We now choose $h \in L^p(\Omega, \RR)$, $\omega \in \Omega$, $1 \leq j \leq n$, $\phi: G\rightarrow G$, as well as $\lambda > 0$ and set
\begin{eqnarray*}
B_j := B_j(h,\omega, \lambda, \phi):= \{g \in U_J \,| \, |{A}^{\phi}_jh(\phi(g)\omega)| > \lambda \}
\end{eqnarray*}
and
\begin{eqnarray*}
D_n := \{ \omega \in \Omega \,|\, \max_{1 \leq j \leq n} |{A}_j^{\phi}h(\omega)| > \lambda \}.
\end{eqnarray*}
   
With these sets at hand, it is easy to verify that
\begin{eqnarray} \label{eqn:Bjott}
\left| \bigcup_{j=1}^n B_j(h,\omega, \lambda, \phi) \right| = \int_{U_J} \one_{D_n}(\phi(g)\omega) \, dm_L(g).
\end{eqnarray}
Since $G$ acts on $\Omega$ in a measure preserving manner, we obtain with the Fubini theorem that
\begin{eqnarray} \label{eqn:measureDN}
\mu(D_n) &=& |U_J|^{-1} \int_{\Omega} \int_{U_J} \one_{D_n}(\phi(g)\omega) \, dm_L(g) \, d\mu(\omega).
\end{eqnarray} 
Hence, if we can find some constant $\tilde{C}^{'} > 0$ independent of $n,h, \omega, \lambda$ and $\phi$ such that the so-called {\em transfer inequalities}
\begin{eqnarray} \label{eqn:transfer}
\int_{U_J} \one_{D_n}(\phi(g)\omega) \, dm_L(g) \leq \frac{\tilde{C}^{'}}{\lambda^p}\, \int_{\overline{U}}|h(\phi(g)\omega)|^p \, dm_L(g)
\end{eqnarray}
hold ($\mu$-almost all $\omega$), we can insert Inequality (\ref{eqn:transfer}) into Inequality (\ref{eqn:measureDN}) to obtain with the fact that the action of $G$ preserves
$\mu$ that
\begin{eqnarray*}
\mu(D_n) \leq \frac{2\tilde{C}^{'}}{\lambda^p} \, \|h\|^p_{L^p(\Omega,\RR)}
\end{eqnarray*}
and with $n \rightarrow \infty$, the theorem is proven. So let us sketch the proof of the Inequalities (\ref{eqn:transfer}). The key step is the following observation. Namely, for each $1 \leq j \leq n$ and every $b \in B_j$, we obtain with H{\"o}lder's inequality that
\begin{eqnarray*}
\int_{U_jb} |h(\phi(g)\omega)|^p \, dm_L(g) &\geq& |U_jb|^{-p/q} \left( \int_{U_jb} |h(\phi(g)\omega)| \, dm_L(g) \right)^p \\
&=& |U_jb|^{-p/q} \left( \int_{U_j} |h(\phi(gb)\omega)| \, dm_L(g) \right)^p \Delta(b)^p \\
&=& |U_jb|^{-p/q} |U_j|^p |{A}_j^{\phi}h(\phi(b)\omega)|^p \Delta(b)^p \\
&\stackrel{b \in B_j}{\geq}& \lambda^p |U_j b|^{p - p/q} = \lambda^p \, |U_jb|,
\end{eqnarray*}
where $1/p + 1/q = 1$ and $q = \infty$ for $p=1$ (with the convention that $\frac{1}{\infty} = 0)$ and where $\Delta$ stands for the modular function of $G$. Having this latter inequality at hand, the remaining arguments for the proof of the statement can be carried out in the same manner as demonstrated in \cite{Lindenstrauss-01}, Lemma 2.1 and Theorem 3.2 in \cite{Pogorzelski-10}, Corollary 8.8.
\end{proof}

\begin{proof}[Proof of Lemma \ref{lemma:partition}]
Since $G$ is second countable and Hausdorff and by the outer regularity of the Haar measure, for every $n \in \NN$, one finds a precompact neighbourhood $V_n$ of the unity $\operatorname{id}$ in $G$ such that $\cap_{n \in \NN} V_n = \{\operatorname{id}\}$, $V_{n+1} \subseteq V_n$ and $|V_n| < 2^{-n}$ for all $n \in \NN$. So let $Q \in \mathcal{F}(G)$. We will define a sequence of successively refined partitions of $Q$ in the following manner. For each $n \in \NN$, cover the closure of $Q$ by left-translates of $V_n$. Due to the precompactness of $Q$, we can extract a finite, open subcover $\cup_{i=1}^{K(n)} g^{(n)}_i V_n$ of $Q$. Next, we make the translates of this union disjoint such that 
\[
\bigcup_{i=1}^{K(n)} g^{(n)}_i V_n = \bigsqcup_{i=1}^{K(n)} g^{(n)}_i \tilde{V}^{i}_n
\]    
where $\bigsqcup$ stands for the disjoint union and $\tilde{V}^{i}_n \subseteq V_n$ is (Borel-)measurable (in fact $\tilde{V}^{i}_n \in \mathcal{F}(G)$) for all $1 \leq i \leq K(n)$. Hence, putting $Q^{(n)}_i := Q \cap g^{(n)}_i \tilde{V}^{i}_n$ for $1 \leq i \leq K(n)$, we have $Q = \bigsqcup_{i=1}^{K(n)} Q^{(n)}_i$, as well as $|Q^{(n)}_i| < 2^{-n}$ for every $1 \leq i \leq K(n)$. In light of that, define the partitions 
\begin{eqnarray*}
P_1(Q) &:=& \{ Q^{(1)}_i \,|\, 1 \leq i \leq K(1) \}, \\
P_m(Q) &:=& \{ Q^{(m)}_i \cap \tilde{Q}^{(m-1)}  \,|\,  1 \leq i \leq K(m), \, \tilde{Q}^{(m-1)} \in P_{m-1} \} \quad \mbox{ for } m \geq 1,
\end{eqnarray*} 
for the set $Q$.
It is obvious that $P_{m+1}(Q)$ is finer than $P_{m}(Q)$ for every $m \geq 1$, and we write $P_{m+1}(Q) \geq P_m(Q)$. Further, by construction, $|A| \leq 2^{-m}$ for $A \in P_m$ $(m \geq 1)$. Since $G$ is $\sigma$-compact, it can be exhausted by a countable sequence $(G_n)$ of increasing, compact sets. We set $\tilde{G}_n := G_n \setminus G_{n-1}$ with the convention that $G_{0}:= \{\operatorname{id}\}$ and we repeat the above construction for each $Q= \tilde{G}_n$, $n \geq 1$. Then clearly, for each $m \geq 1$, the expression 
\[
P_m := \bigcup_{n \in \NN} P_m(\tilde{G}_n)
\] 
is a partition of the group satisfying the first two claims of the Lemma \ref{lemma:partition}.  \\

To show the approximation result for $F^{0}$, let $Q \in \mathcal{F}(G)$ and define
\[
F_{P_m}^{0}(Q)(\omega) := \sum_{A \in P_m} \|F(A \cap Q)\|_Y(\omega).
\]  
for $m \geq 1$, where the partitions $P_m$ have been defined above. 
By the triangle inequality, we have $F^{0}_{P_m}(Q) \leq F^{0}_{P_{m+1}}(Q)$ $\mu$-almost surely for all $m \geq 1$. Further, it is clear from the boundedness of $F$ that $\|F^{0}_{P_m}(Q)\|_{L^p(\RR, Y)} \leq C\, |Q|$ for every $m \geq 1$. Hence, we can define
\begin{eqnarray} \label{eqn:limit!}
\overline{F}^{0}(Q)(\omega) := \lim_{m \rightarrow \infty} F_{P_m}^{0}(Q)(\omega)
\end{eqnarray}
for $\mu$-almost every $\omega \in \Omega$ and we set $\overline{F}^{0}(Q)(\omega) = 0$ for the remaining $\omega \in \Omega$. 
We will now show that in fact $F^{0}(Q) = \overline{F}^{0}(Q)$ $\mu$-almost everywhere. 
This shows that the equivalence class $F^{0}(Q)$ is well defined and in particular measurable.
Note that it follows from the definition of $F^{0}$ that for each $m \in \NN$, we have 
\[
F_{P_m}^{0}(Q)(\omega) \leq F^{0}(Q)(\omega)
\]
for $\mu$-almost every $\omega \in \Omega$. This implies $F^{0}(Q) \geq \overline{F}^{0}(Q)$ almost everywhere.  
For the converse inequality, we choose a finite, disjoint union $Q = \bigsqcup_{l=1}^L Q_l$, where $Q_l \in \mathcal{F}(G)$ for $1 \leq l \leq L$. For $\omega \in \Omega$, we obtain with the triangle inequality that 
\begin{eqnarray} \label{eqn:lim1}
\sum_{l=1}^L \|F(Q_l)\|_Y(\omega) &\leq& \sum_{l=1}^L \sum_{A \in P_m} \|F(Q_l \cap A)\|_Y(\omega) \nonumber \\
&\leq& \sum_{\ato{A \in P_m,}{\exists\,l:\, A \cap Q \subseteq Q_l}} \|F(A \cap Q)\|_Y(\omega) + \sum_{l=1}^L \sum_{\ato{A \in P_m, \, \exists\, l_1\neq l:}{A \cap Q_{l}, A \cap Q_{l_1} \neq \emptyset}} \|F(A \cap Q_l)\|_Y(\omega) \nonumber \\
&\leq& F^{0}_{P_m}(Q)(\omega) + \sum_{l=1}^L \sum_{\ato{A \in P_m, \, \exists\, l_1\neq l:}{A \cap Q_{l}, A \cap Q_{l_1} \neq \emptyset}} \|F(A \cap Q_l)\|_Y(\omega)
\end{eqnarray}
for all $m \in \NN$. Further, we have by the boundedness of $F$ that 
\[
\left\| \sum_{l=1}^L \sum_{\ato{A \in P_m, \, \exists\, l_1\neq l:}{A \cap Q_{l}, A \cap Q_{l_1} \neq \emptyset}}  \|F(A \cap Q_l)\|_Y(\cdot)   \right\|_{L^p(\Omega, \RR)} \leq C\, \sum_{l=1}^L |\partial_{V_m}(Q_l)|.
\]
Note that since $\cap_{m \in\NN} V_m = \{\operatorname{id}\}$, it follows from the continuity of the (Haar) measure, as well as by the fact that all involved sets have compact closure, that 
\[
|\partial_{V_m}(Q_l)| = |V_m^{-1}Q_l \cap V_m^{-1}(G\setminus Q_l)| \stackrel{m \rightarrow \infty}{\rightarrow} 0
\] 
for each $1 \leq l \leq L$. In light of that, we can find a subsequence $(m_k)$ such that the second sum in Inequality (\ref{eqn:lim1}) converges to zero almost everywhere. Hence, we deduce from that same inequality that 
\begin{eqnarray*}
\sum_{l=1}^L \|F(Q_l)\|_Y(\omega) &\leq& \limsup_{k \rightarrow \infty} F^{0}_{P_{m_k}}(Q)(\omega) = \overline{F}^{0}(Q)(\omega) 
\end{eqnarray*}
for almost every $\omega \in \Omega$. Hence $F^{0}(Q) \leq \overline{F}^{0}(Q)$ and since $Q \in \mathcal{F}(G)$ was arbitrarily chosen, this shows in fact that $F^{0} = \overline{F}^{0}$. 
\end{proof}

\subsection*{Acknowledgements}

I would like to thank my adviser {\sc Daniel Lenz} for enduring encouragement and for fruitful suggestions concerning the organization of this work. Also, I would like to express thanks to {\sc Fabian Schwarzenberger} for stimulating discussions, as well as for his contributions in the precedent projects which provided a major groundwork for the results of this paper. 
Thanks also go to {\sc Xueping Huang} for helpful remarks about Poisson point processes and absolute continuity.
Moreover, I take this chance to mention gratefully that this work was supported by the German National Academic Foundation (Studienstiftung des deutschen Volkes).

\bibliographystyle{amsalpha}
\bibliography{PS_lit}

\providecommand{\bysame}{\leavevmode\hbox to3em{\hrulefill}\thinspace}
\providecommand{\MR}{\relax\ifhmode\unskip\space\fi MR }
\providecommand{\MRhref}[2]{%
  \href{http://www.ams.org/mathscinet-getitem?mr=#1}{#2}
}
\providecommand{\href}[2]{#2}
\begin{thebibliography}{EFHN09}

\bibitem[AdJ81]{AkcogluJ-81}
M.~Akcoglu and A.~del Junco, \emph{Differentiation of $n$-dimensional additive
  processes}, Canadian J. Math. \textbf{33} (1981), no.~3, 749--768.

\bibitem[CHK07]{CombesHK-07}
J.M. Combes, P.D. Hislop, and F.~Klopp, \emph{An optimal {W}egner estimate and
  its application to the global continuity of the integrated density of states
  for random {S}chr\"odinger operators}, Duke Math. J. \textbf{140} (2007),
  no.~3, 469--498.

\bibitem[CL90]{CarmonaL-90}
R.~Carmona and J.~Lacroix, \emph{Spectral theory of random {S}chr\"odinger
  operators}, Birkh\"auser Boston Inc., 1990.

\bibitem[DU]{DiestelU-91}
J.~Diestel and J.J. Uhl, \emph{Vector measures}, Amer. Math. Soc., Mathematical
  surveys and monographs Vol.\@ 15.

\bibitem[EFHN09]{EisnerFHN-12}
T.~Eisner, B.~Farkas, M.~Haase, and R.~Nagel, \emph{Ergodic theory - an
  operator theoretic approach}, 12th international internet seminar, {\tt
  http://isem.mathematik.tu-darmstadt.de/isem} (1.10.2012), 2009.

\bibitem[Eme74]{Emerson-74}
W.~Emerson, \emph{The pointwise ergodic theorem for amenable groups}, Amer. J.
  Math. \textbf{96} (1974), 472--487.

\bibitem[Emi85]{Emilion-85}
R.~Emilion, \emph{A general differentiation theorem for $n$-dimensional
  additive processes}, Math. Scand. \textbf{57} (1985), 206--214.

\bibitem[Fuk81]{Fukushima-81}
M.~Fukushima, \emph{On asymptotics of spectra of {S}chr\"odinger operators},
  335--347.

\bibitem[Gar70]{Garsia-70}
A.~Garsia, \emph{Topics in almost everywhere convergence}, Markham Publishing
  Company, 1970.

\bibitem[GLV11]{GruberLV-11}
M.~Gruber, D.~Lenz, and I.~Veseli{\'c}, \emph{${L^p}$-approximation of the
  integrated density of states for {S}chr{\"o}dinger operators with finite
  local complexity}, Integr. Equ. Oper. Theory \textbf{2} (2011), no.~69,
  217--232.

\bibitem[Gre73]{Greenleaf-73}
F.~Greenleaf, \emph{Ergodic theorems and the construction of summing sequences
  in amenable locally compact groups}, Communications Pure Appl. Math.
  \textbf{26} (1973), 29--46.

\bibitem[GU72]{GretskyU-72}
N.E. Gretsky and J.J. Uhl, \emph{Bounded linear operators on {B}anach function
  spaces of vector valued functions}, Transact. Amer. Math. Soc. \textbf{167}
  (1972), 263--277.

\bibitem[Hoc07]{Hochman-07}
M.~Hochman, \emph{Averaging sequences and abelian rank in amenable groups},
  Israel J. Math. \textbf{158} (2007), 119--128.

\bibitem[Kin93]{Kingman-93}
J.~F.~C. Kingman, \emph{{P}oisson {P}rocesses}, Clarendon Press Oxford, 1993.

\bibitem[Kla07]{Klassert-07}
S.~Klassert, \emph{{S}pektraltheoretische {U}ntersuchungen von zuf\"alligen
  {O}peratoren auf {D}elone-{M}engen}, Ph.D. thesis, Technische Universit\"at
  Chemnitz, 2007.

\bibitem[KOS89]{KobayashiOS-89}
T.~Kobayashi, K.~Ono, and T.~Sunada, \emph{Periodic {S}chr\"odinger operators
  on a manifold}, Forum Math. \textbf{1} (1989), no.~1, 69--79.

\bibitem[Kre85]{Krengel-85}
U.~Krengel, \emph{Ergodic theorems}, Walter de Gruyter, 1985.

\bibitem[Kri07]{Krieger-07}
F.~Krieger, \emph{Le lemme d'{O}rnstein-{W}eiss d'apr\`es {G}romov}, Dynamics,
  Ergodic Theory and Geometry (B.~Hasselblatt, ed.), vol.~54, Cambridge
  University Press, 2007, pp.~99--112.

\bibitem[Kri10]{Krieger-10}
\bysame, \emph{The {O}rnstein-{W}eiss lemma for discrete amenable groups}, Max
  {P}lanck {I}nstitute for {M}athematics {B}onn, {MPIM} {P}reprint 2010-48,
  2010.

\bibitem[KV10]{KirschV-10}
W.~Kirsch and I.~Veseli{\'c}, \emph{Lifshitz tails for a class of
  {S}chr\"odinger operators with random breather-type potential}, Lett. Math.
  Phys. \textbf{94} (2010), no.~1, 27--39.

\bibitem[KW06]{KirschW-06}
W.~Kirsch and S.~Warzel, \emph{Anderson localization and {L}ifshits tails for
  random surface potentials}, J. Funct. Anal. \textbf{230} (2006), no.~1,
  222--250.

\bibitem[Len02]{Lenz-02}
D.~Lenz, \emph{Uniform ergodic theorems on subshifts over a finite alphabet},
  Ergod. Theor. Dyn. Syst. \textbf{22} (2002), no.~1, 245--255.

\bibitem[Lin01]{Lindenstrauss-01}
E.~Lindenstrauss, \emph{Pointwise theorems for amenable groups}, Invent. Math.
  \textbf{146} (2001), no.~2, 259--295.

\bibitem[LMV08]{LenzMV-08}
D.~Lenz, P.~M\"uller, and I.~Veseli{\'c}, \emph{Uniform existence of the
  integrated density of states for models on {$\ZZ^d$}}, Positivity \textbf{12}
  (2008), no.~4, 571--589.

\bibitem[LPPV08]{LenzPPV-08}
D.~Lenz, N.~Peyerimhoff, O.~Post, and I.~Veseli{\'c}, \emph{Continuity
  properties of the integrated density of states on manifolds}, Jpn. J. Math.
  \textbf{3} (2008), no.~1, 121--161.

\bibitem[LPV07]{LenzPV-07}
D.~Lenz, N.~Peyerimhoff, and I.~Veseli{\'c}, \emph{Groupoids, von {N}eumann
  algebras and the integrated density of states}, Math. Phys. Anal. Geom.
  \textbf{10} (2007), no.~1, 1--41.

\bibitem[LS06]{LenzS-06}
D.~Lenz and P.~Stollmann, \emph{An ergodic theorem for {Delone} dynamical
  systems and existence of the density of states.}, J. Anal. Math. \textbf{97}
  (2006), 1--23.

\bibitem[LSV10]{LenzSV-10}
D.~Lenz, F.~Schwarzenberger, and I.~Veseli\'c, \emph{A {B}anach space-valued
  ergodic theorem and the uniform approximation of the integrated density of
  states}, Geometriae Dedicata \textbf{150} (2010), no.~1, 1--34.

\bibitem[LV09]{LenzV-09}
D.~Lenz and I.~Veseli\'c, \emph{Hamiltonians on discrete structures: jumps of
  the integrated density of states and uniform convergence}, Math. Z.
  \textbf{263} (2009), no.~4, 813--835.

\bibitem[{\^O}ku79]{Okura-79}
H.~{\^O}kura, \emph{On the spectral distributions of certain
  integro-differential operators with random potential}, Osaka J. Math.
  \textbf{16} (1979), no.~3, 633--666.

\bibitem[OW87]{OrnsteinW-87}
D.~Ornstein and B.~Weiss, \emph{Entropy and isomorphism theorems for actions of
  amenable groups}, J. Anal. Math. \textbf{48} (1987), no.~1, 1--141.

\bibitem[Pog10]{Pogorzelski-10}
F.~Pogorzelski, \emph{Ergodic theorems on amenable groups}, Diplomarbeit,
  Eberhard-Karls-Universit\"at T\"ubingen, 2010, \\ {\tt
  http://www.analysis-lenz.uni-jena.de/ls\_analysis\_multimedia/Felix/\\
  Pogorzelski10\_diplomarbeit.pdf}.

\bibitem[Pog13]{PogJFA}
\bysame, \emph{Almost-additive ergodic theorems for amenable groups}, J. Funct.
  Anal. \textbf{265} (2013), no.~8, 1615--1666.

\bibitem[Pog14]{Pogorzelski-PhD}
\bysame, \emph{Banach space-valued ergodic theorems and spectral
  approximation}, Ph{D} thesis, Friedrich-Schiller-Universit{\"a}t Jena, 2014.

\bibitem[Pog16]{PogERR}
\bysame, \emph{Corrigendum and {A}ddendum to ``{A}lmost-additive ergodic
  theorems for amenable groups'' [{J}. {F}unct. {A}nal. 265 (2013) 1615--1666]
  [ {MR}3079231]}, J. Funct. Anal. \textbf{271} (2016), no.~11, 3393--3401.

\bibitem[PS16]{PogorzelskiS-11}
F.~Pogorzelski and F.~Schwarzenberger, \emph{A {B}anach space-valued ergodic
  theorem for amenable groups and applications}, J. Anal. Math. \textbf{130}
  (2016), 19--69.

\bibitem[Sat98]{Sato-98}
R.~Sato, \emph{Vector valued differentiation theorems for multiparameter
  additive processes in {$L_p$}-spaces}, Positivity \textbf{2} (1998), 1--18.

\bibitem[Sat99]{Sato-99}
\bysame, \emph{Vector valued ergodic theorems for multiparameter additive
  processes}, Colloq. Math. \textbf{79} (1999), no.~2, 193--202.

\bibitem[Sat03]{Sato-03}
\bysame, \emph{Vector valued ergodic theorems for multiparameter additive
  processes {II}}, Colloq. Math. \textbf{97} (2003), no.~1, 117--129.

\bibitem[Ves01]{Veselic-01}
I.~Veseli{\'c}, \emph{Integrated density of states and {W}egner estimates for
  random {S}chr\"odinger operators}, Aportaciones Matem{\'a}ticas, Soc. Mat.
  Mexic., {L}ecture {N}otes from a {W}orkshop on {S}chr\"odinger {O}perators,
  2001.

\end{thebibliography}

\end{document}